\def\smallskip{\vskip\smallskipamount}
\def\medskip{\vskip\medskipamount}
\def\bigskip{\vskip\bigskipamount}
\newcommand{\nocontentsline}[3]{}
\let\origcontentsline\addcontentsline
\newcommand\stoptoc{\let\addcontentsline\nocontentsline}
\newcommand\resumetoc{\let\addcontentsline\origcontentsline}
\newtheoremstyle{thmstyle}{}{}{\itshape}{}{\bfseries}{ }{5pt}{}
\newtheoremstyle{exstyle}{}{}{}{}{\bfseries}{ }{5pt}{}
\newtheoremstyle{defstyle}{}{}{}{}{\bfseries}{ }{5pt}{}
\newtheoremstyle{remstyle}{}{}{}{}{\bfseries}{ }{5pt}{}
\theoremstyle{thmstyle}
\newtheorem{thm}{Theorem}[section]
\newtheorem{theorem}[thm]{Theorem}
\newtheorem{lemma}[thm]{Lemma}
\newtheorem{prop}[thm]{Proposition}
\newtheorem{proposition}[thm]{Proposition}
\newtheorem{corollary}[thm]{Corollary}
\theoremstyle{exstyle}
\newtheorem{example}[thm]{Example}
\theoremstyle{defstyle}
\newtheorem{def-prop}[thm]{Definition-Proposition}
\newtheorem{def-lem}[thm]{Definition-Lemma}
\newtheorem{rem-convention}[thm]{Remark-Convention}
\newtheorem{def-note}[thm]{Definition-Notation}
\theoremstyle{remstyle}
\newtheorem{remark}[thm]{Remark}
\theoremstyle{remstyle}
\newcommand{\Hom}{\operatorname{Hom}}
\newcommand{\Fac}{\operatorname{Fac}}
\newcommand{\Ext}{\operatorname{Ext}}
\newcommand{\taurigid}{\operatorname{\tau-rigid}}
\DeclareMathOperator{\End}{End}
\DeclareMathOperator*{\rad}{rad}
\DeclareMathOperator*{\modu}{mod}
\DeclareMathOperator*{\proj}{proj}
\DeclareMathOperator*{\Sub}{Sub}
\DeclareMathOperator*{\Filt}{Filt}
\DeclareMathOperator*{\ind}{ind}
\DeclareMathOperator*{\brick}{brick}
\DeclareMathOperator*{\simp}{sim}
\DeclareMathOperator*{\tors}{tors}
\DeclareMathOperator*{\chtors}{ch-tors}
\DeclareMathOperator*{\torf}{torf}
\DeclareMathOperator*{\soc}{soc}
\DeclareMathOperator*{\topm}{top}
\DeclareMathOperator*{\chbrick}{ch-brick}
\DeclareMathOperator*{\JI}{JI}
\DeclareMathOperator*{\MI}{MI}
\DeclareMathOperator*{\Hasse}{Hasse}
\newcommand{\cupdot}{\mathbin{\mathaccent\cdot\cup}}
\newcommand*{\doublerightarrow}[2]{\mathrel{
  \settowidth{\@tempdima}{$\scriptstyle#1$}
  \settowidth{\@tempdimb}{$\scriptstyle#2$}
  \ifdim\@tempdimb>\@tempdima \@tempdima=\@tempdimb\fi
  \mathop{\vcenter{
    \offinterlineskip\ialign{\hbox to\dimexpr\@tempdima+1em{##}\cr
    \rightarrowfill\cr\noalign{\kern.5ex}
    \rightarrowfill\cr}}}\limits^{\!#1}_{\!#2}}}
\newcommand*{\triplerightarrow}[1]{\mathrel{
  \settowidth{\@tempdima}{$\scriptstyle#1$}
  \mathop{\vcenter{
    \offinterlineskip\ialign{\hbox to\dimexpr\@tempdima+1em{##}\cr
    \rightarrowfill\cr\noalign{\kern.5ex}
    \rightarrowfill\cr\noalign{\kern.5ex}
    \rightarrowfill\cr}}}\limits^{\!#1}}}
\newcommand{\doublewidetilde}[1]{{%
  \mathpalette\double@widetilde{#1}%
}}
\newcommand{\double@widetilde}[2]{%
  \sbox\z@{$\m@th#1\widetilde{#2}$}%
  \ht\z@=.9\ht\z@
  \widetilde{\box\z@}%
}
\begin{document}

\title[Brick-splitting torsion pairs $\&$ left modularity]{Brick-splitting torsion pairs and left modularity}

\author[Sota Asai, Osamu Iyama, Kaveh Mousavand, Charles Paquette]{Sota Asai, Osamu Iyama, Kaveh Mousavand, Charles Paquette} 

\address{Sota Asai: Graduate School of Mathematical Sciences, the University of Tokyo, Japan}
\email{sotaasai@g.ecc.u-tokyo.ac.jp}
\address{Osamu Iyama: Graduate School of Mathematical Sciences, the University of Tokyo, Japan}
\email{iyama@ms.u-tokyo.ac.jp}
\address{Kaveh Mousavand: Representation Theory and Algebraic Combinatorics Unit, Okinawa Institute of Science and Technology (OIST), Japan}
\email{mousavand.kaveh@gmail.com}
\address{Charles Paquette: Department of Mathematics and Computer Science, Royal Military College of Canada, Kingston ON, Canada}
\email{charles.paquette.math@gmail.com}

\subjclass [2020]{16G10, 06A07, 05E10, 16S90}
\keywords{brick, splitting torsion pair, representation-directed algebra, brick-directed algebra, trim lattice, wall-and-chamber structure}

\dedicatory{Dedicated to Claus Michael Ringel on the occasion of his 80th birthday}

\begin{abstract} 
We introduce the notion of brick-splitting torsion pairs as a modern analogue and generalization of the classical notion of splitting torsion pairs. A torsion pair is called brick-splitting if any given brick is either torsion or torsion-free with respect to that torsion pair. After giving some properties of these pairs, we fully characterize them in terms of some lattice-theoretical properties, including left modularity. This leads to the notion of brick-directed algebras, which are those for which there does not exist any cycle of non-zero non-isomorphisms between bricks. This class of algebras is a novel generalization of representation-directed algebras. We show that brick-directed algebras have many interesting properties and give several characterizations of them. In particular, we prove that a brick-finite algebra is brick-directed if and only if the lattice of torsion classes is left modular (or equivalently, extremal). We also give a characterization of brick-directed algebras in terms of their wall-and-chamber structure, as well as of a certain Newton polytope associated to them. Moreover, we introduce an explicit construction of an abundance of brick-directed algebras, both of the tame and wild representation types. 
\end{abstract}

\maketitle

\tableofcontents

\newpage

\section{Summary and Main Results}\label{Sec: Summary of Main Results}

In this section, we outline some of the important motivations of our work and summarize our main results. For terminology and notations that are not explicitly defined here, we refer to the end of this section and Section \ref{Section: Preliminaries and Background}, and references therein. 
In the following, $A$ is always assumed to be a finite dimensional associative $k$-algebra with identity, where $k$ denotes a field. By $\modu A$, we denote the category of finitely generated left $A$-modules.

\subsection{Brick-splitting torsion pairs}\label{Subsection: Brick-splitting torsion pairs}
For an algebra $A$, we recall that a torsion pair $(\mathcal{T},\mathcal{F})$ in $\modu A$ is said to be \emph{splitting} if $\ind A \subseteq \mathcal{T}\cup \mathcal{F}$, that is, every indecomposable module in $\modu A$ belongs to $\mathcal{T}$ or $\mathcal{F}$. Several characterizations of splitting torsion pairs have appeared in the literature (see Proposition \ref{Prop: splitting pairs}). 
As one of the main objectives of this paper, we introduce and study a modern analogue of splitting torsion pairs. First, recall that a module $X$ is a \emph{brick} provided $\End_A(X)$ is a division algebra. By $\brick A$ we denote the set of all (isomorphism classes) of bricks in $\modu A$. 
Then, we say that a torsion pair $(\mathcal{T},\mathcal{F})$ in $\modu A$ is \emph{brick-splitting} if $\brick A\subseteq \mathcal{T}\cup \mathcal{F}$, that is, every brick in $\modu A$ belongs to $\mathcal{T}$ or $\mathcal{F}$. A torsion class (or a torsion-free class) that belongs to a brick-splitting torsion pair is also called \emph{brick-splitting}. 
Each splitting torsion pair is obviously brick-splitting, but the converse is not true (see Example \ref{Example: Non-trivial brick-splitting torsion pairs}).  Henceforth, we refer to $(0,\modu A)$ and $(\modu A,0)$ as the \emph{trivial} (brick-)splitting torsion pairs.

\medskip

Our first theorem gives several characterizations of brick-splitting torsion pairs. Before stating these results, we recall some standard notation and terminology. In particular, for a lattice $(L,\leq)$, by $\Hasse(L)$ we denote the Hasse diagram of $L$. Moreover, an element $x$ in $L$ is called \emph{left modular} if for each $y \leq z$ in $L$, we have the equality $(y\vee x)\wedge z = y\vee (x\wedge z)$. 
For an algebra $A$, by $\tors A$ we denote the set of all torsion classes in $\modu A$. It is known that, under the inclusion order, $\tors A$ is a complete lattice which is weakly atomic semidistributive, and a labeling of the edges of $\Hasse(\tors A)$ is given by bricks. For some details on the lattice theory and properties of $\tors A$, see Section \ref{Section: Preliminaries and Background} and references therein. 

\begin{theorem}[Prop. \ref{Prop: brick-splitting and intervals} $\&$ Prop. \ref{Prop: brick-splitting is left modular}]\label{Thm: brick-splitting torsion class characterization}
For an algebra $A$, the following are equivalent:
\begin{enumerate}
    \item $(\mathcal{T}, \mathcal{F})$ is a brick-splitting torsion pair.
    \item Every $B$ in $\brick A$ appears as an arrow in $\Hasse [0,\mathcal{T}]$ or $\Hasse[\mathcal{T}, \modu A]$.
    \item $\mathcal{T}$ is a left modular element of the lattice $\tors A$.
\end{enumerate}
\end{theorem}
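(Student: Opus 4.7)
The plan is to split the theorem into two pieces: (1)$\Leftrightarrow$(2), which is a direct translation via the canonical brick-labeling of the Hasse quiver of $\tors A$; and (2)$\Leftrightarrow$(3), which is a lattice-theoretic fact about left modular elements in semidistributive lattices. The key input throughout is the result (due to Demonet--Iyama--Reading--Reiten--Thomas and Barnard--Carroll--Zhu) that every Hasse cover $\mathcal{T}_1\lessdot\mathcal{T}_2$ in $\tors A$ carries a unique brick label $B$, realized as the unique simple object of the wide subcategory $\mathcal{T}_2\cap\mathcal{T}_1^{\perp}$, and that every brick labels at least one such cover.

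The direction (2)$\Rightarrow$(1) is then immediate from this labeling: if $B$ labels an arrow $\mathcal{T}_1\lessdot\mathcal{T}_2$ in $\Hasse[0,\mathcal{T}]$, then $B\in\mathcal{T}_2\subseteq\mathcal{T}$; if the arrow lies in $\Hasse[\mathcal{T},\modu A]$, then $B\in\mathcal{T}_1^{\perp}\subseteq\mathcal{T}^{\perp}=\mathcal{F}$. For (1)$\Rightarrow$(2), given $B\in\mathcal{T}$, the plan is to use the smallest torsion class $T(B)=\Filt(\Fac B)$ containing $B$: it satisfies $T(B)\subseteq\mathcal{T}$, is join-irreducible in $\tors A$, and its unique incoming Hasse cover is labeled $B$, hence lies in $\Hasse[0,\mathcal{T}]$. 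For $B\in\mathcal{F}$, the dual construction, using the largest torsion class $T'$ with $B\in(T')^{\perp}$ (which contains $\mathcal{T}$ because $B\in\mathcal{F}=\mathcal{T}^{\perp}$), produces an outgoing arrow labeled $B$ in $\Hasse[\mathcal{T},\modu A]$.

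For (2)$\Leftrightarrow$(3), the plan is to invoke the general principle that in a complete semidistributive lattice equipped with its canonical edge labeling by join-irreducibles, an element $x$ is left modular if and only if every label appears on a Hasse arrow in $[0,x]$ or $[x,\hat 1]$. In $\tors A$, bricks correspond bijectively to join-irreducibles via $B\mapsto T(B)$, so this translates exactly to condition (2). Concretely, for (3)$\Rightarrow$(2), a cover $\mathcal{T}_1\lessdot\mathcal{T}_2$ labeled $B$ with $\mathcal{T}_1\not\supseteq\mathcal{T}$ and $\mathcal{T}_2\not\subseteq\mathcal{T}$ would be split by $\mathcal{T}$, and the pair $(\mathcal{U},\mathcal{V})=(\mathcal{T}_1,\mathcal{T}_2)$ witnesses the failure of the identity $(\mathcal{U}\vee\mathcal{T})\wedge\mathcal{V}=\mathcal{U}\vee(\mathcal{T}\wedge\mathcal{V})$. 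Conversely, assuming (2), I would verify left modularity by reducing arbitrary intervals to sequences of covers and tracking how brick labels propagate using semidistributivity, ultimately forcing the two sides of the modular equality to coincide.

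The main obstacle is this lattice-theoretic step (2)$\Leftrightarrow$(3) in the possibly-infinite lattice $\tors A$: weak atomicity permits reduction to covering relations, but the label-tracking argument must be globalized through arbitrary joins and meets, which is where the infinite semidistributive machinery has to be applied with care. Once this equivalence is pinned down, everything else reduces to the standard brick-labeling dictionary carried out in the first two paragraphs.
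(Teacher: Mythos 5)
Your treatment of $(1)\Leftrightarrow(2)$ matches the paper's Proposition \ref{Prop: brick-splitting and intervals} essentially verbatim: for $B\in\mathcal T$ you exhibit the arrow $\mathcal T(B)\cap{}^{\perp}B\lessdot\mathcal T(B)$ in $[0,\mathcal T]$, and for $B\in\mathcal F$ the dual arrow $^{\perp}B\lessdot{}^{\perp}(\mathcal F(B)\cap B^{\perp})$ in $[\mathcal T,\modu A]$; the converse reads off $B\in\mathcal T$ or $B\in\mathcal T_1^{\perp}\subseteq\mathcal T^{\perp}$ from the cover it labels. That part is correct and follows the paper.

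The problem is the bridge between (2) (equivalently (1)) and (3). You invoke a purported ``general principle'' that in a complete semidistributive lattice with its canonical join-irreducible labeling, $x$ is left modular iff every label occurs on a cover in $[0,x]$ or $[x,\hat1]$, and then say everything reduces to it. No such theorem is cited, and the closest available results (Thomas--Williams, M\"uhle) are for \emph{finite} lattices, whereas $\tors A$ is typically infinite. You flag this as ``the main obstacle,'' but then leave it there; that is the actual content of the theorem, not a corollary of the labeling dictionary. The paper does not use such a principle. Instead it proves $(1)\Leftrightarrow(3)$ directly and representation-theoretically in Proposition \ref{Prop: brick-splitting is left modular}.

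Concretely, for $(3)\Rightarrow(1)$ the paper's argument needs more than ``a mislabeled cover witnesses the failure of the modular identity.'' Given $L\in\brick A$ with $L\notin\mathcal T$, one works with $\mathcal T(L)$, which is completely join-irreducible and has a unique lower cover $\mathcal Y$; applying left modularity to $\mathcal Y\le\mathcal T(L)$ gives $(\mathcal Y\vee\mathcal T)\cap\mathcal T(L)=\mathcal Y$, hence $L\notin\mathcal Y\vee\mathcal T$. The crucial step is then \emph{representation-theoretic}: taking the canonical exact sequence $0\to T\to L\to F\to 0$ with $T\in\mathcal T$, $F\in\mathcal T^{\perp}$, and using that every proper quotient of $L$ lies in $\mathcal Y$ (the minimal-extending-brick property), one shows $T\neq 0$ would force $L\in\mathcal Y\vee\mathcal T$, a contradiction; hence $T=0$ and $L\in\mathcal T^{\perp}$. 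Your sketch does not identify a witnessing cover, nor does it supply this module-theoretic input, so the implication is not actually proved. Similarly, for $(1)\Rightarrow(3)$ the paper reduces to cover relations using weak atomicity of $\tors A$ and then does an explicit two-case analysis depending on whether the labeling brick $L$ lies in $\mathcal T$ or in $\mathcal T^{\perp}$, each time computing both sides of $(\mathcal U\cap\mathcal T)\vee\mathcal V=\mathcal U\cap(\mathcal T\vee\mathcal V)$ using $\mathcal V=\mathcal U\cap{}^{\perp}L$ when $L\in\mathcal T^{\perp}$. Your phrase ``tracking how brick labels propagate using semidistributivity'' gestures at this but does not carry it out. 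In short: half the theorem is done, the other half is a genuine gap that cannot be closed by appealing to a finite-lattice black box.
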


The next proposition provides further insights into the behavior of brick-splitting torsion pairs and shows how such pairs control some important properties of the algebras under consideration. 
For a full subcategory $\mathcal{C}$ of $\modu A$, by $\simp A\cap \mathcal{C}$ we denote the set of isomorphism classes of simple $A$-modules that belong to $\mathcal{C}$. Note that for each torsion pair $(\mathcal{T}, \mathcal{F})$ in $\modu A$, we always have $\simp A \subseteq \mathcal{T} \cup\mathcal{F}$. 

\begin{proposition}[Prop. \ref{Prop: extension between simples}] \label{Prop:brick-splitting and simples}
Let $A$ be a connected algebra and $(\mathcal{T},\mathcal{F})$ be a brick-splitting torsion pair in $\modu A$. Then, for $S_1 \in \simp A\cap \mathcal{T}$ and $S_2 \in \simp A \cap \mathcal{F}$, we have $\Ext^1_A(S_2, S_1)=0$. Hence, in the Ext-quiver of $A$, there is no arrow from the vertices corresponding to $\simp A \cap \mathcal{F}$ to the vertices corresponding to $\simp A \cap \mathcal{T}$.
\end{proposition}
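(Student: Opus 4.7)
The plan is to argue by contradiction: I would assume $\Ext^1_A(S_2, S_1) \neq 0$ and produce a brick that lies outside both $\mathcal{T}$ and $\mathcal{F}$, violating the brick-splitting hypothesis. A nonzero class yields a non-split short exact sequence
$$0 \to S_1 \xrightarrow{\iota} M \xrightarrow{\pi} S_2 \to 0.$$
Before anything else I would observe that $S_1 \not\cong S_2$, since otherwise $S_1 = S_2$ would lie in $\mathcal{T} \cap \mathcal{F} = 0$, contradicting simplicity; in particular $\Hom_A(S_1, S_2) = 0 = \Hom_A(S_2, S_1)$.

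The crux is to show that $M$ is a brick. First I would verify the auxiliary vanishings $\Hom_A(M, S_1) = 0$ and $\Hom_A(S_2, M) = 0$: any nonzero map $M \to S_1$ either restricts to a nonzero (hence invertible) map on the simple socle $S_1$, which would provide a retraction of $\iota$ and split the sequence, or vanishes on $S_1$ and factors through $S_2 \to S_1 = 0$; the dual argument handles $\Hom_A(S_2, M)$. Then for any $f \in \End_A(M)$, preservation of $\soc M = S_1$ gives a restriction $f|_{S_1} \in \End_A(S_1)$ and an induced quotient map $\bar f \in \End_A(S_2)$, each of which is zero or invertible. The vanishings above force $f|_{S_1}$ and $\bar f$ to be simultaneously zero (whence $f$ factors through a map $S_2 \to M$, forcing $f=0$) or simultaneously invertible, in which case the five lemma promotes $f$ to an isomorphism. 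Hence $\End_A(M)$ is a division ring, so $M \in \brick A$.

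With $M$ established as a brick, I would conclude by brick-splitting: $M$ lies in $\mathcal{T}$ or in $\mathcal{F}$. If $M \in \mathcal{T}$, then since torsion classes are closed under quotients, the quotient $S_2$ lies in $\mathcal{T} \cap \mathcal{F} = 0$, a contradiction. Symmetrically, if $M \in \mathcal{F}$, then the submodule $S_1$ lies in $\mathcal{F} \cap \mathcal{T} = 0$, again impossible. This gives $\Ext^1_A(S_2,S_1) = 0$, and the Ext-quiver reformulation is immediate, since an arrow from the vertex of $S_2$ to the vertex of $S_1$ encodes a nonzero element of $\Ext^1_A(S_2, S_1)$. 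The principal obstacle is the brick verification in the middle step; once that is in hand, the rest is a short diagram chase using the closure properties of torsion and torsion-free classes, and connectedness of $A$ does not appear to be needed.
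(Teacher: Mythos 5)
Your proof is correct and follows essentially the same strategy as the paper: form the non-split extension $0 \to S_1 \to M \to S_2 \to 0$, observe that $M$ is a brick because $S_1 \not\cong S_2$, and then derive a contradiction from the brick-splitting property. The paper merely asserts the brick property of $M$ as well known and deduces the contradiction from $\Hom$-vanishing between $\mathcal{T}$ and $\mathcal{F}$, while you spell out the $\End_A(M)$ computation and use closure of $\mathcal{T}$ under quotients (resp.\ $\mathcal{F}$ under submodules), which is an equivalent route; your observation that connectedness of $A$ is not used here is also accurate (it is invoked only in the corollary that follows).
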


Let $A$ and $(\mathcal{T},\mathcal{F})$ be as in Proposition \ref{Prop:brick-splitting and simples}, and by $e_{\mathcal{T}}$ denote the idempotent of $A$ corresponding to the sum of the primitive idempotents corresponding to $\simp A \cap \mathcal{T}$. Then, $e_{\mathcal{T}}A(1-e_{\mathcal{T}})=0$. Consequently, $A$ admits a triangular decomposition
$$A \cong \left[\begin{array}{cc}
   e_{\mathcal{T}}Ae_{\mathcal{T}}  &  0\\
   (1-e_{\mathcal{T}})Ae_{\mathcal{T}}  & (1-e_{\mathcal{T}})A(1-e_{\mathcal{T}})\\ 
\end{array}\right].$$
Motivated by this observation, we say an algebra $A$ is \emph{fully cyclic} provided that for any idempotent $e$ of $A$, the condition $eA(1-e)=0$ implies $e=0$ or $e=1$. The following corollary is an interesting consequence of Proposition \ref{Prop:brick-splitting and simples}.

\begin{corollary}[Cor. \ref{Cor: fully cyclic and self-injective}]
\label{Cor: No brick-splitting over self-injective algebras}
If $A$ is fully cyclic, then the only brick-splitting torsion pairs in $\modu A$ are trivial. In particular, if $A$ is a connected self-injective algebra, then the only brick-splitting torsion pairs in $\modu A$ are $(0,\modu A)$ and $(\modu A, 0)$.
\end{corollary}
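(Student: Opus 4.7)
The strategy is in two halves: derive the first assertion directly from Proposition~\ref{Prop:brick-splitting and simples} and the triangular decomposition discussion preceding this corollary, then establish the ``in particular'' part by showing that every connected self-injective algebra is fully cyclic.

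For the first half, let $A$ be fully cyclic and $(\mathcal{T},\mathcal{F})$ a brick-splitting torsion pair. Fully cyclic algebras are automatically connected, since a product decomposition $A=A_1\times A_2$ with $A_1,A_2$ nonzero yields a nontrivial idempotent $e=(1_{A_1},0)$ with $eA(1-e)=0$. Hence Proposition~\ref{Prop:brick-splitting and simples} and the subsequent discussion apply, and writing $e:=e_{\mathcal{T}}$ we obtain $eA(1-e)=0$, which forces $e\in\{0,1\}$ by the fully cyclic hypothesis. If $e=0$, then $\simp A\cap\mathcal{T}=\emptyset$; but $\mathcal{T}$ is closed under quotients and every nonzero finite-dimensional module has a nonzero semisimple top, so $\mathcal{T}=0$. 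Dually, if $e=1$, closure of $\mathcal{F}$ under submodules forces $\mathcal{F}=0$.

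For the ``in particular'' statement it suffices to prove that a connected self-injective algebra $A$ is fully cyclic. We may assume $A$ is basic (otherwise pass to its basic form via Morita equivalence, since self-injectivity and brick-splitting are preserved). Take an idempotent $e$ with $eA(1-e)=0$, set $f=1-e$, and split the primitive idempotents of $A$ into $E\sqcup F$ according to absorption by $e$ and $f$. From $e_iAe_j=0$ for $i\in E$, $j\in F$ one reads off that each $P_j=Ae_j$ with $j\in F$ has all composition factors on the $F$-side, so its simple socle $S_{\nu(j)}$, where $\nu$ is the Nakayama permutation, is likewise $F$-side; hence $\nu$ restricts bijectively to $F$ and therefore also to $E$. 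The main obstacle is excluding $fAe\neq 0$: a nonzero $\phi\colon P_a\to P_b$ with $a\in F$, $b\in E$ would have $\Image\phi$ at once a quotient of $P_a$ (so all composition factors are $F$-side) and a nonzero submodule of $P_b$ whose socle lies in the $E$-side simple $S_{\nu(b)}$. This forces $\soc(\Image\phi)=0$, a contradiction. Combining $eAf=0$ with $fAe=0$ gives an algebra decomposition $A\cong eAe\times fAf$, and connectedness forces $e\in\{0,1\}$, completing the proof.
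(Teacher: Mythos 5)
Your proof is correct. The first assertion follows essentially as in the paper: you apply Proposition \ref{Prop:brick-splitting and simples} together with the triangular-decomposition discussion to get $e_{\mathcal{T}}A(1-e_{\mathcal{T}})=0$, then invoke the fully cyclic hypothesis; your explicit remark that a fully cyclic algebra is automatically connected is a useful touch, since that hypothesis of Proposition \ref{Prop: extension between simples} is left implicit in the paper's own proof. For the second assertion the paper argues at the level of modules: $eA(1-e)=0$ forces the injective module $A(1-e)$ to have only $F$-side composition factors, hence $A(1-e)\cong D((1-e)A)$, and dualizing the support yields $(1-e)Ae=0$. Your argument is a morphism-level version of the same idea: you first show the Nakayama permutation stabilizes $F$ (and hence $E$), and then for a hypothetical nonzero $\phi\colon P_a\to P_b$ with $a\in F$, $b\in E$ you observe that $\Image\phi$ would have only $F$-side composition factors while any nonzero submodule of the projective-injective $P_b$ must contain its $E$-side simple socle $S_{\nu(b)}$. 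Both routes rest on the same ingredients — self-injectivity, the support constraint coming from $eA(1-e)=0$, and essential simple socles of indecomposable injectives — so the difference is one of presentation rather than substance; yours is arguably a bit more direct and elementary. The preliminary reduction to the basic case via Morita equivalence is superfluous given the paper's standing assumption that $A$ is basic, but it does no harm.
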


As our second novel generalization of a classical notion in representation theory, we give a brick-analogue of the family of representation-directed algebras (for the definition and some remarks on representation-directed algebras, see Section \ref{Section: Brick-directed algebras}). 
For an algebra $A$, we say $X_0\xrightarrow{f_1} X_1\xrightarrow{f_2}  \cdots \xrightarrow{f_{m-1}}  X_{m-1}\xrightarrow{f_m}  X_m$ is a \emph{brick-cycle} in $\modu A$ if every $X_i$ is a brick, for all $0\leq i\leq m$, with $X_0\simeq X_m$, and each $f_j:X_{j-1}\rightarrow X_j$ is non-zero and non-invertible, for $1\leq j \leq m$. Then, we say $A$ is a \emph{brick-directed} algebra if $\modu A$ contains no brick-cycles. Evidently, each representation-directed algebra is brick-directed. However, there are many brick-directed algebras that are not representation-directed  (see Examples \ref{Example: Non-trivial brick-splitting torsion pairs}, \ref{Example: windwheel alg of rank 2}, and Corollary \ref{Various families of brick-directed algebras}).

\medskip

In the following theorem, we give a conceptual characterization of brick-directed algebras in terms of brick-splitting torsion pairs. 
Let us recall that, for a poset $(P,\leq)$, a subset of $P$ is called a \emph{chain} if it is totally ordered. A chain in $P$ is said to be \emph{maximal} if it is not properly contained in any other chain in $P$.

\begin{theorem}[Theorem \ref{Thm: Brick-directed & maximal chain of brick-splitting pairs}]\label{Thm: Characterization of brick-directed via maximal chain of brick-splitting pairs}
For an algebra $A$, the following are equivalent:
\begin{enumerate}
    \item $A$ is brick-directed;
    \item There exists a maximal chain $\{\mathcal{T}_i\}_{i\in I}$ in $\tors A$ such that every $\mathcal{T}_i$ is a brick-splitting torsion class.
\end{enumerate}
\end{theorem}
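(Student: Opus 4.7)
The plan is to prove the two implications separately, using Theorem~\ref{Thm: brick-splitting torsion class characterization} together with the brick labelling of covers in $\Hasse(\tors A)$: for any cover $\mathcal{T} \lessdot \mathcal{T}'$ in $\tors A$, the wide subcategory $\mathcal{T}' \cap \mathcal{T}^\perp$ has a unique simple object, namely the brick labelling that cover.

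For $(2) \Rightarrow (1)$, let $\{\mathcal{T}_i\}_{i \in I}$ be a maximal chain of brick-splitting torsion classes in $\tors A$, and assume for contradiction that $X_0 \xrightarrow{f_1} X_1 \xrightarrow{f_2} \cdots \xrightarrow{f_m} X_m = X_0$ is a brick-cycle. The key observation is that if $\mathcal{T}$ is brick-splitting with torsion-free part $\mathcal{F}$, and $f\colon B \to B'$ is a non-zero map between bricks with $B \in \mathcal{T}$, then $B' \in \mathcal{T}$: otherwise $B' \in \mathcal{F}$ and $\Hom_A(\mathcal{T}, \mathcal{F}) = 0$ would force $f = 0$. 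Applied iteratively around the cycle and over each $\mathcal{T}_i$, this yields that for each $i$, either all $X_j$ belong to $\mathcal{T}_i$ or none do. Since the chain runs from $0$ to $\modu A$, maximality together with weak atomicity of $\tors A$ provides a cover $\mathcal{T}_{i_0} \lessdot \mathcal{T}_{i_0+1}$ in the chain with all $X_j \in \mathcal{T}_{i_0+1} \setminus \mathcal{T}_{i_0}$. Brick-splittingness of $\mathcal{T}_{i_0}$ then places each $X_j$ in the wide subcategory $\mathcal{T}_{i_0+1} \cap \mathcal{T}_{i_0}^\perp$, whose unique brick is the label $B$ of the cover; hence every $X_j$ is isomorphic to $B$. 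The cycle thus reduces to a composition of non-zero endomorphisms of $B$, each of which is invertible because $\End_A(B)$ is a division ring, contradicting the non-invertibility of each $f_j$.

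For $(1) \Rightarrow (2)$, assume $A$ is brick-directed. Define $B \preceq B'$ to mean that there exist bricks $Y_0 = B, Y_1, \dots, Y_k = B'$ and non-zero maps $Y_{i-1} \to Y_i$; brick-directedness makes this a partial order on iso classes of bricks. Well-order $\brick A = \{B_\alpha\}_{\alpha < \kappa}$ compatibly with $\preceq^{\mathrm{op}}$, so that each initial segment $S_\alpha := \{B_\beta : \beta < \alpha\}$ is an up-set of $(\brick A, \preceq)$, and let $\mathcal{T}_\alpha$ be the smallest torsion class containing $S_\alpha$, with $\mathcal{T}_\lambda = \bigvee_{\alpha < \lambda} \mathcal{T}_\alpha$ at limit ordinals. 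Three verifications complete the proof: (i) each $\mathcal{T}_\alpha$ is brick-splitting, because the description $\mathcal{T}_\alpha = \Filt(\Fac(S_\alpha))$ ensures that any non-zero morphism from an object of $\mathcal{T}_\alpha$ to a brick $B$ factors through a sub-quotient of some $B_\beta \in S_\alpha$, producing a non-zero $B_\beta \to B$, so $B_\beta \preceq B$ and by up-closedness $B \in S_\alpha$, giving $\mathcal{T}_\alpha \cap \brick A = S_\alpha$; (ii) $\mathcal{T}_\alpha \lessdot \mathcal{T}_{\alpha+1}$ is a cover, since any torsion class $\mathcal{T}$ with $\mathcal{T}_\alpha \subsetneq \mathcal{T} \subseteq \mathcal{T}_{\alpha+1}$ contains, by weak atomicity, a cover $\mathcal{T}_\alpha \lessdot \mathcal{T}' \subseteq \mathcal{T}$ whose brick label lies in $(\mathcal{T}_{\alpha+1} \cap \brick A) \setminus S_\alpha = \{B_\alpha\}$ by (i), forcing $\mathcal{T}' \supseteq S_{\alpha+1}$ and thus $\mathcal{T}' \supseteq \mathcal{T}_{\alpha+1}$; (iii) the chain exhausts $\modu A$ since every simple module is a brick and $\modu A$ is the smallest torsion class containing $\simp A$.

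The main obstacle is controlling the transfinite construction in $(1) \Rightarrow (2)$ when $\brick A$ is infinite: one must verify that the resulting chain is truly maximal in $\tors A$ rather than merely maximal among chains of brick-splitting torsion classes, in particular checking that the limit-stage joins do not introduce new bricks and that no torsion class can be inserted into the chain. In the brick-finite case the construction reduces to a clean finite induction on $|\brick A|$ and these subtleties disappear.
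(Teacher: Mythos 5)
Your argument for $(2)\Rightarrow(1)$ is correct and follows the same lines as the paper's; note only that the cover $\mathcal{T}^-\lessdot\mathcal{T}^+$ (with $\mathcal{T}^-$ the union of chain elements containing none of the $X_j$ and $\mathcal{T}^+$ the intersection of those containing all of them) is obtained from maximality of the chain alone, with no appeal to weak atomicity needed, and that your endgame --- collapsing the cycle to non-invertible endomorphisms of the unique labelling brick --- is slightly more careful than the paper's terse ``$n=1$''.

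The $(1)\Rightarrow(2)$ direction, however, has a genuine gap. Your construction presupposes a well-ordering $\{B_\alpha\}_{\alpha<\kappa}$ of $\brick A$ in which every initial segment $S_\alpha$ is a $\preceq$-up-set. Unwinding this condition forces the well-order to linearly extend $\preceq^{\mathrm{op}}$, which is possible only if $\preceq^{\mathrm{op}}$ is well-founded, i.e.\ only if $(\brick A,\preceq)$ satisfies the ascending chain condition. Brick-infinite brick-directed algebras violate this: for the Kronecker algebra (brick-directed by Proposition~\ref{Prop: Hereditary brick-directed algs}), the preprojective bricks $P_2\prec P_1\prec \tau^{-1}P_2\prec\cdots$ form an infinite strictly $\preceq$-ascending chain, so the required well-order does not exist and the transfinite recursion never starts. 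The fix is to pass through the intermediate condition used in the paper's Theorem~\ref{Thm: Brick-directed & maximal chain of brick-splitting pairs}: by Szpilrajn's order-extension theorem, extend the Hom-preorder to a \emph{total} (not well-) order $J$ on $\brick A$ with $\Hom_A(B_i,B_j)=0$ for $i<j$, and index the chain by \emph{all} down-closed subsets of $J$ --- for a well-order these are precisely the initial segments, but not in general. Maximality is then checked directly, by observing that the brick-index set of any torsion class comparable to the whole chain is automatically down-closed, so that torsion class already appears. Your parenthetical worry about ``limit-stage joins introducing new bricks'' is thus not where the difficulty lies; the obstruction occurs at stage zero, before the recursion can even begin. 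As written, your construction only goes through when $A$ is brick-finite.
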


Similar to Proposition \ref{Prop:brick-splitting and simples}, we have the following result on the Ext-quiver of brick-directed algebras. In particular, the following corollary is a consequence of Proposition \ref{prop: chain of brick-splitting separating simples}. 
We say that an algebra $A$ is \emph{weakly triangular} if the only oriented cycles in the Ext-quiver of $A$ are powers and compositions of loops. 
 
\begin{corollary}[Cor. \ref{Cor: brick-directed is weak-triangular}]\label{Cor: No Cycle}
Every brick-directed algebra is weakly triangular.
\end{corollary}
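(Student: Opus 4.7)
The plan is to argue by contradiction, combining Theorem \ref{Thm: Characterization of brick-directed via maximal chain of brick-splitting pairs}, Proposition \ref{Prop:brick-splitting and simples}, and the cited Proposition \ref{prop: chain of brick-splitting separating simples}. Since ``weakly triangular'' is purely a condition on the Ext-quiver of $A$ and any oriented cycle in that quiver lies in a single connected component, I would first reduce to the case where $A$ is connected; this is harmless because brick-directedness is inherited by block summands and weak triangularity holds for $A$ as soon as it holds for each block.

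Assume then that $A$ is connected and brick-directed, and suppose, toward a contradiction, that its Ext-quiver carries an oriented cycle
$$S_{i_1} \to S_{i_2} \to \cdots \to S_{i_n} \to S_{i_1}$$
involving at least two non-isomorphic simples, so that each arrow records $\Ext^1_A(S_{i_j},S_{i_{j+1}}) \neq 0$. For any brick-splitting torsion pair $(\mathcal{T},\mathcal{F})$, Proposition \ref{Prop:brick-splitting and simples} forbids the configuration $S_{i_j}\in\mathcal{F}$ and $S_{i_{j+1}}\in\mathcal{T}$; transporting this prohibition once around the cycle forces every vertex of the cycle to lie on a common side of the partition $(\simp A \cap \mathcal{T},\,\simp A\cap \mathcal{F})$.

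Next, by Theorem \ref{Thm: Characterization of brick-directed via maximal chain of brick-splitting pairs}, brick-directedness of $A$ yields a maximal chain $\{\mathcal{T}_i\}_{i\in I}$ in $\tors A$ consisting entirely of brick-splitting torsion classes, and Proposition \ref{prop: chain of brick-splitting separating simples} then ensures that this chain separates the simples of $A$: for any two non-isomorphic simples $S\not\simeq T$ there is some index $i\in I$ with $S$ and $T$ on opposite sides of the torsion pair associated to $\mathcal{T}_i$. Applying this separation to two non-isomorphic simples that appear in our hypothetical cycle contradicts the conclusion of the previous paragraph, so no such cycle can exist and $A$ is weakly triangular.

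The main obstacle is the separation of simples along a maximal chain of brick-splitting torsion classes, which is exactly the content of Proposition \ref{prop: chain of brick-splitting separating simples}; modulo that proposition, the present corollary reduces to the short bookkeeping around the cycle furnished by Proposition \ref{Prop:brick-splitting and simples}.
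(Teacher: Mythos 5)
Your overall strategy is close to the paper's, but there is a genuine misattribution that leaves a gap as written. You invoke Proposition~\ref{prop: chain of brick-splitting separating simples} for the claim that the maximal chain of brick-splitting torsion classes ``separates the simples'' --- i.e., that for any two non-isomorphic simples $S\not\simeq T$ there is some $\mathcal{T}_i$ in the chain with $S$ and $T$ on opposite sides. That is not what the proposition says. Its hypothesis is the existence of a chain $0 = \mathcal{T}_0 \subset \cdots \subset \mathcal{T}_n = \modu A$ of brick-splitting torsion classes in which $\mathcal{T}_i$ contains exactly $i$ simples, and its conclusion is precisely that $A$ is weakly triangular --- i.e., the very statement you are trying to prove. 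You cannot read a ``separation of simples'' conclusion out of it. Moreover, a maximal chain of brick-splitting torsion classes produced by Theorem~\ref{Thm: Characterization of brick-directed via maximal chain of brick-splitting pairs} is not handed to you in the normalized form required by the proposition's hypothesis; one must first argue that along any cover relation $\mathcal{U}\lessdot\mathcal{T}$ in $\tors A$ at most one simple is added (which follows because $\brick[\mathcal{U},\mathcal{T}]$ is a singleton, and a simple that is in $\mathcal{T}$ but not in $\mathcal{U}$ must lie in $\mathcal{T}\cap\mathcal{U}^\perp$), and then extract the subchain where the simple count goes up by exactly one each time. You skip this normalization entirely.

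Once that step is in place, the shortest route is the paper's: extract the subchain, apply Proposition~\ref{prop: chain of brick-splitting separating simples}, done. Your contradiction argument around the cycle, while correct in spirit (the ``transporting the prohibition'' step from Proposition~\ref{Prop:brick-splitting and simples} is fine, and the reduction to the connected case is harmless), essentially re-derives the content of the proposition's own proof rather than using its statement, and it relies on the separation fact which you neither prove nor correctly cite. To repair, either (i) prove the one-simple-per-cover observation and then appeal to Proposition~\ref{prop: chain of brick-splitting separating simples} directly for weak triangularity, as the paper does, or (ii) keep your contradiction structure but replace the erroneous citation with the actual argument for why a maximal chain in $\tors A$ must separate any two non-isomorphic simples (unique brick labels on cover relations).
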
 

As noted above, through brick-directed algebras, we obtain a vast generalization of the classical family of representation-directed algebras. Unlike representation-directed algebras, which are always representation-finite, brick-directed algebras include various types of representation-infinite algebra. In fact, via a simple gluing construction recalled in Section \ref{Section: Brick-directed algebras}, one obtains an abundance of explicit brick-directed algebras of any given rank $n \in \mathbb{Z}_{>1}$. 
Before we state the next result, recall that by Drozd's trichotomy theorem, each algebra $A$ over an algebraically closed field $k$ falls exactly into one of the families of representation-finite, (representation-infinite) tame or wild algebras.

\begin{corollary}[Cor. \ref{Various families of brick-directed algebras}]\label{Cor on various families of brick-directed algebras}
For any positive integer $n>1$, there exists a brick-directed algebra of rank $n$ of any of the following types:
\begin{itemize}
    \item representation-finite algebra;
    \item brick-finite tame algebra;
    \item brick-infinite tame algebra;
    \item brick-finite wild algebra;
    \item brick-infinite wild algebra.
\end{itemize}
\end{corollary}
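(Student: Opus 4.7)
The approach is constructive and leverages the gluing construction for brick-directed algebras introduced in Section \ref{Section: Brick-directed algebras}. The plan is as follows: glue a brick-directed algebra $B$ with a representation-directed algebra by identifying a single vertex; the resulting algebra is brick-directed, and its rank equals the sum of the ranks of the pieces (minus one for the shared vertex). Since linearly oriented $A_m$-quivers are representation-directed of any rank $m$, it suffices to exhibit, for each of the five representation types, a brick-directed base example of some small rank $r$, and then to glue it with a linearly oriented $A_{n-r+1}$ to produce a brick-directed algebra of rank $n$. The representation type will be preserved under such gluings, since the contribution of the representation-directed piece only adds a controlled family of new indecomposables, which cannot turn a tame algebra wild nor introduce infinite families in the representation-finite case.

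I would use the following base examples. For the representation-finite case, any linearly oriented $A_n$-quiver, which is automatically representation-directed and hence brick-directed. For the brick-infinite tame case, the Kronecker algebra, which is hereditary tame with bricks lying in the preprojective, regular, and preinjective components; the classical ordering on these components in a hereditary tame algebra prevents brick-cycles. For the brick-finite tame case, the windwheel algebra of rank $2$ from Example \ref{Example: windwheel alg of rank 2}. For the brick-infinite wild case, the generalized Kronecker quiver with three arrows, which is wild hereditary and brick-directed by the same hereditary argument. For the brick-finite wild case, a delicate explicit construction is needed; such an example is provided in Section \ref{Section: Brick-directed algebras}. In each case I would verify, using the machinery of earlier sections, that the base algebra is indeed brick-directed of the stated type, and then invoke gluing to extend to arbitrary rank.

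The hardest part will be the brick-finite wild case. The existence of brick-finite wild algebras is itself nontrivial, and arranging for such an algebra to additionally be brick-directed requires combining $\tau$-tilting-theoretic criteria for brick-finiteness with a combinatorial analysis of morphisms between bricks in order to rule out brick-cycles. A secondary, more routine obstacle is checking that gluing preserves both brick-directedness and representation type; for brick-directedness this is handled by the gluing lemma in Section \ref{Section: Brick-directed algebras}, and for representation type it reduces to standard facts about how attaching a representation-directed piece at a single vertex affects the classification of indecomposables. Once the five base examples and the preservation properties of the gluing are established, the corollary follows uniformly for every $n > 1$.
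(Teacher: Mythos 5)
Your plan has a fatal error in the brick-infinite wild case. You propose to use ``the generalized Kronecker quiver with three arrows, which is wild hereditary and brick-directed by the same hereditary argument.'' This is false, and it directly contradicts two results in the paper you would be relying on: Corollary \ref{Cor: strictly wild is never brick-directed} states that strictly wild algebras are never brick-directed (and wild hereditary algebras are strictly wild), and Proposition \ref{Prop: Hereditary brick-directed algs} classifies brick-directed path algebras as exactly those with Dynkin or Kronecker quiver. The $3$-Kronecker algebra fails both criteria; concretely, its regular component contains plenty of bricks with nonzero non-isomorphisms in both directions, giving brick-cycles. So your base example for case (V) does not exist, and this is precisely why the paper has to work hardest in the wild cases: it starts from the \emph{local} wild algebra $k[x_1,x_2,x_3]/\langle x_i x_j\rangle$ (trivially brick-directed since it has a single brick) and attaches a tame or finite tail to produce wildness at higher rank, verifying brick-directedness by a direct analysis of which representations can be bricks.

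There is a second, smaller problem with your plan for case (II). The gluing construction of Subsection \ref{Subsection: Construction of new brick-directed algebras} requires identifying a sink of the first quiver with a source of the second, but the windwheel $\Lambda_2$ has loops at both of its vertices, so neither vertex is a sink or a source. You therefore cannot glue $\Lambda_2$ with a linear $A_{n-1}$ using the construction as stated. The paper sidesteps this by exhibiting the whole family $\Lambda_n$ of windwheel algebras directly at each rank $n$. Note also that even where the paper does glue onto a local algebra with loops (case (IV)), it uses a modified gluing and proves brick-directedness by hand rather than by invoking Proposition \ref{Prop:on gluing}, exactly because the loop vertex is neither a source nor a sink. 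Finally, deferring the brick-finite wild case to ``an example provided in Section 4'' without identifying the mechanism leaves the hardest part of the proof unaddressed; the key insight you are missing is that one obtains brick-finite wild algebras by gluing a trivially brick-finite wild \emph{local} algebra onto a representation-finite piece and checking that the extra relations force every brick to live entirely on the representation-finite side.
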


Notice that, by considering a particular class of bricks, a similar (but weaker) notion to brick-directed algebras (algebras admitting a \emph{reverse hom-orthogonal order}) was studied in \cite[Definition 5.5]{BH}. We observe that for brick-finite algebras, the aforementioned notion coincides with the notion of brick-directedness that we introduced above.

\subsection{$\tau$-tilting finite algebras and left modular lattices} 
In Section \ref{Section: Trim lattices in representation theory}, we use brick-directed algebras to give new characterizations of some lattice theoretical phenomena in the context of lattice theory of torsion classes. 
Before stating some of our results, we remark that in the literature of lattice theory many important combinatorial properties of finite lattices are not defined for infinite lattices. Consequently, we restrict our attention to those algebras $A$ for which $\tors A$ is a finite lattice. This is the case if and only if $A$ is $\tau$-tilting finite, which itself is equivalent to brick-finiteness of $A$ (see \cite{DIJ, DI+}). Hence, in this part we only consider brick-finite algebras.

\medskip
We first recall that for any algebra $A$, the lattice $\tors A$ is always semidistributive (see \cite[Theorem 1.3]{DI+} and \cite[Corollary 8.8]{RST}), a weaker version of distributive lattices.
We also recall that, in lattice theory, there are some other weaker versions of distributive lattices.
Firstly, $(L,\leq)$ is called \emph{left modular} if $L$ has a maximal chain of left modular elements. Secondly, $(L,\leq)$ is said to be \emph{extremal} if it has a maximal chain whose length is equal to both the number of join-irreducible elements and the number of meet-irreducibles \cite{Ma}. Thirdly, a finite lattice $(L,\leq)$ is said to be \emph{trim} if it is extremal and left modular \cite{Th}. 
Through some lattice-theoretical techniques, it has recently been shown that for semidistributive lattices, left modularity, extremality, and trimness are equivalent (see \cite{TW} and \cite{Mu}).
However, without semidistributivity, neither of left modularity and extremality implies the other (see Section \ref{Section: Preliminaries and Background} and references therein).

\medskip

The following theorem gives 
new realizations of the above-mentioned lattice-theoretical properties in the more algebraic framework of representation theory (see Propositions \ref{Prop: brick-directed equiv. left modular} and \ref{Prop: brick-directed equiv. extremal}). In particular, we give a complete classification of those algebras $A$ for which $\tors A$ are left modular (equivalently, extremal). 
Before we summarize our results in the next theorem, recall that for an extremal lattice $L$, the \emph{spine} of $L$ consists of those elements that lie on some chain of maximal length.

\begin{theorem}[Prop. \ref{Prop: brick-directed equiv. left modular} $\&$ Prop. \ref{Prop: brick-directed equiv. extremal} $\&$ Cor. \ref{Cor: brick-directed equiv. trim}]\label{Thm: brick-directed, extremal, left modular, trimness}
Let $A$ be a brick-finite algebra. The following are equivalent:
\begin{enumerate}
    \item $A$ is brick-directed;
    \item $\tors A$ is left modular;
    \item $\tors A$ is extremal;
    \item $\tors A$ is a trim lattice.
\end{enumerate}
Moreover, if $A$ satisfies any of the above conditions, then the spine of  $\tors A$ consists of all brick-splitting torsion classes and it forms a distributive sublattice. 
\end{theorem}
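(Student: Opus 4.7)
The plan is to reduce the four-way equivalence to two independent pieces: the equivalence $(1)\Leftrightarrow (2)$, obtained by directly combining the characterizations already established in Theorems \ref{Thm: brick-splitting torsion class characterization} and \ref{Thm: Characterization of brick-directed via maximal chain of brick-splitting pairs}, and the lattice-theoretic equivalence $(2)\Leftrightarrow (3)\Leftrightarrow (4)$, which is pure lattice theory applied to the semidistributive lattice $\tors A$.

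For $(1)\Leftrightarrow (2)$, first I would invoke Theorem \ref{Thm: brick-splitting torsion class characterization}(3), which identifies the brick-splitting torsion classes of $\modu A$ with the left modular elements of the lattice $\tors A$. Combined with Theorem \ref{Thm: Characterization of brick-directed via maximal chain of brick-splitting pairs}, $A$ is brick-directed if and only if $\tors A$ admits a maximal chain all of whose elements are left modular, and this is by definition left modularity of the lattice $\tors A$. For the equivalence $(2)\Leftrightarrow (3)\Leftrightarrow (4)$, I would use that the brick-finiteness assumption makes $\tors A$ a \emph{finite} semidistributive lattice (via the results of \cite{DIJ, DI+} together with \cite{RST}). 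Since trimness is defined to be extremality together with left modularity, the implications $(4)\Rightarrow (2)$ and $(4)\Rightarrow (3)$ are immediate; the nontrivial content is that in a finite semidistributive lattice, each of left modularity and extremality implies the other, and hence trimness. This is the theorem of \cite{TW} as strengthened in \cite{Mu}, which can be cited directly.

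For the final assertion about the spine, I would prove it in two steps. First, that the spine is contained in the set of brick-splitting torsion classes: by extremality the common length of maximum chains equals the number of join-irreducibles of $\tors A$, which under the brick labeling of $\Hasse(\tors A)$ equals $|\brick A|$; hence every maximum-length chain has each brick appearing exactly once as an edge label, forcing by Theorem \ref{Thm: brick-splitting torsion class characterization}(2) that every element on such a chain is brick-splitting. Conversely, starting from any brick-splitting $\mathcal{T}$, I would build a maximum-length chain through $\mathcal{T}$ by separately applying the brick-directedness characterization of Theorem \ref{Thm: Characterization of brick-directed via maximal chain of brick-splitting pairs} to the intervals $[0,\mathcal{T}]$ and $[\mathcal{T},\modu A]$, which are themselves lattices of torsion classes of suitable quotient/subcategory-type algebras governed by the brick-splitting property, and concatenating. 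Finally, that the spine forms a distributive sublattice is a general property of trim lattices proved in \cite{TW}, which applies once trimness of $\tors A$ is established.

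The main obstacle I anticipate is the converse direction of the spine identification, namely that every brick-splitting torsion class lies on some maximum-length chain; the inclusion in the other direction is clean because the brick labeling does the bookkeeping automatically, whereas the extension of a single brick-splitting element to a full brick-splitting maximal chain needs a careful argument using the interval structure cut out by $\mathcal{T}$. Everything else is either immediate from cited earlier results or a direct invocation of the lattice-theoretic literature.
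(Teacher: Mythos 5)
Your proposal is correct in its overall structure and reaches the same conclusion, but it departs from the paper's route in two places, once in a way the paper explicitly flags as an alternative, and once in a way that is noticeably more laborious than what the paper actually does.

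For the equivalences $(1)\Leftrightarrow(2)$ you follow the paper exactly: combine Proposition~\ref{Prop: brick-splitting is left modular} (brick-splitting $=$ left modular element) with Theorem~\ref{Thm: Brick-directed & maximal chain of brick-splitting pairs}. For $(2)\Leftrightarrow(3)\Leftrightarrow(4)$ you cite the lattice-theoretic equivalence for finite semidistributive lattices from \cite{TW} and \cite{Mu}. The paper acknowledges this route in the sentence just before Proposition~\ref{Prop: brick-directed equiv. extremal}, but deliberately gives a \emph{direct} algebraic proof of $(1)\Leftrightarrow(3)$ instead: one direction runs the brick-splitting-chain argument to count labels, the other uses Demonet's correspondence (Theorem~\ref{Thm: Demonet's correspondence}) to turn a maximum-length chain of torsion classes into a total order on $\brick A$. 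Your route is valid but less self-contained; the paper's buys an independent, representation-theoretic reproof of the Thomas--Williams--Mühle equivalence in this setting, which is part of the point of the section.

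For the spine assertion the gap in your write-up is more substantial in practice, though you correctly identified where it sits. The paper does not prove either inclusion by hand: it simply quotes \cite[Theorem~3.7]{TW}, which says that in a trim lattice the left modular elements are exactly the elements on the spine, and combines this with Proposition~\ref{Prop: brick-splitting is left modular} to get \emph{both} inclusions in one line; the distributivity of the spine then comes from \cite[Lemma~7]{Th} (equivalently \cite[Prop.~2.6]{TW}). Your step~1 (spine $\subseteq$ brick-splitting) is a correct re-proof of one half via edge labels. Your step~2 (brick-splitting $\subseteq$ spine) is the hand-wavy part, and as written it is not complete: you appeal to the intervals $[0,\mathcal{T}]$ and $[\mathcal{T},\modu A]$ being ``lattices of torsion classes of suitable quotient/subcategory-type algebras,'' which, while true for brick-finite $A$ via $\tau$-tilting reduction (cf.~\cite[Theorem~3.8]{Ja}, \cite[Theorem~4.12]{DI+}), requires one to also track (i) that the reduced algebras inherit brick-directedness (this is Proposition~\ref{Prop: some properties of brick-directed algebras}(3)), (ii) that their brick counts are $|\brick A\cap\mathcal{T}|$ and $|\brick A\cap\mathcal{T}^\perp|$ respectively, using the bijection of Theorem~\ref{Thm: bijection between Join-irr and brick-labels}, and (iii) that these counts sum to $|\brick A|$ precisely because $\mathcal{T}$ is brick-splitting. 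An alternative that avoids $\tau$-reduction entirely is to take a total order on $\brick A\cap\mathcal{T}$ coming from brick-directedness, apply Demonet's correspondence to get a length-$|\brick A\cap\mathcal{T}|$ chain of torsion classes in $[0,\mathcal{T}]$ (using $\mathcal{T}=\Filt(\brick A\cap\mathcal{T})$ from \cite[Lemma~3.9]{DI+} for the top endpoint), repeat for $[\mathcal{T},\modu A]$, and concatenate. Either way it can be done, but it is more machinery than the one-line appeal to \cite[Theorem~3.7]{TW} that the paper uses.
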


We note that the final assertion of the preceding theorem (i.e., the characterization of the spine), is a consequence of Theorem \ref{Thm: brick-splitting torsion class characterization}, together with some earlier results on trim lattices (see \cite[Prop. 2.6 and Theorem 3.7]{TW}). We observe that from the aforementioned characterization of spine of trim lattices in our setting, one concludes that for a brick-finite brick-directed algebra $A$, each brick-splitting torsion class in $\modu A$ lies on a chain of maximal length in $\tors A$, that is, a chain of length $|{\brick A}|$. From Theorem \ref{Thm: brick-directed, extremal, left modular, trimness}, we also derive some interesting results. 

\begin{corollary}[Cor. \ref{Cor: quotien, corner and tau-reduction of extremal lattices}]\label{Cor: extremailty preserved}
Let $A$ be a brick-finite algebra. If $\tors A$ is extremal  (equivalently, left modular), then the following lattices are also extremal:
\begin{enumerate}
    \item $\tors A/J$, for any 2-sided ideal $J$ in $A$;
    \item $\tors eAe$, for any idempotent element $e$ in $A$;
    \item $\tors B$, for any algebra $B$ which is a $\tau$-reduction of $A$.
\end{enumerate}
\end{corollary}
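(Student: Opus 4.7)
The plan is to apply Theorem \ref{Thm: brick-directed, extremal, left modular, trimness} twice. Since $\tors A$ is extremal, that theorem forces $A$ to be brick-directed and brick-finite, and for each of the three constructions the goal reduces to showing that the resulting algebra is again brick-finite and brick-directed; the theorem then supplies the desired extremality of its lattice of torsion classes. Brick-finiteness will be immediate in each case from an inclusion of bricks (finiteness of $\tors B$ from its being an interval of the finite $\tors A$), so the main content is inheritance of brick-directedness, i.e.\ the absence of brick-cycles.

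For (1), the category $\modu(A/J)$ sits inside $\modu A$ as the full subcategory of $J$-annihilated modules, with morphisms computed identically over $A$ and $A/J$. Hence $\brick(A/J)\subseteq \brick A$, and every brick-cycle in $\modu(A/J)$ is already a brick-cycle in $\modu A$. Brick-directedness of $A$ therefore propagates to $A/J$.

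For (2), the approach is to use the functor $F:=Ae\otimes_{eAe}-\colon \modu eAe\to \modu A$. The adjunction with the restriction functor $e(-)\colon \modu A\to \modu eAe$, together with the natural identification $eF(N)=N$, yields
$$\Hom_A(F(M),F(N))\cong \Hom_{eAe}(M,eF(N))=\Hom_{eAe}(M,N),$$
so $F$ is fully faithful. A fully faithful functor preserves endomorphism rings (hence bricks) and preserves nonzero non-isomorphisms, so it transports any brick-cycle in $\modu eAe$ to a brick-cycle in $\modu A$. Combined with brick-directedness of $A$, this forces $eAe$ to be brick-directed.

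For (3), the cleanest route is lattice-theoretic. By Jasso's $\tau$-tilting reduction, $\tors B$ is isomorphic to an interval of $\tors A$. Since $\tors A$ is always semidistributive, its extremality is equivalent to its trimness by the results on semidistributive lattices recalled just before Theorem \ref{Thm: brick-directed, extremal, left modular, trimness}. A result of Thomas--Williams \cite{TW} then guarantees that intervals of trim lattices are trim, so $\tors B$ is trim and in particular extremal. The main obstacle will be exactly this last item: one must either invoke the Thomas--Williams closure theorem for intervals, or alternatively verify directly that the brick-correspondence of $\tau$-reduction is compatible with the morphisms needed to detect brick-cycles, so that a brick-cycle in $\modu B$ can be lifted to one in $\modu A$. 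By comparison, (1) and (2) are essentially formal.
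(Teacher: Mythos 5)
Your proof is correct and your overall strategy --- show the derived algebra is again brick-finite and brick-directed, then re-apply Theorem \ref{Thm: brick-directed, extremal, left modular, trimness} --- is exactly the paper's. For parts (1) and (2), your arguments reproduce the content of the paper's Lemma \ref{Lemma: full subcategory} and Proposition \ref{Prop: some properties of brick-directed algebras}: the identification of $\modu(A/J)$ with a full subcategory of $\modu A$, and the full faithfulness of $Ae\otimes_{eAe}-$, so that brick-cycles transport and brick-directedness descends. Where you genuinely diverge is part (3). The paper stays algebraic: by \cite[Theorem 3.8]{Ja} (or \cite[Theorem 4.12(b)]{DI+}), $\modu B$ embeds as a wide, hence full, subcategory of $\modu A$, so Lemma \ref{Lemma: full subcategory} applies uniformly and $B$ is brick-directed, then Proposition \ref{Prop: brick-directed equiv. extremal} gives extremality of $\tors B$. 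You instead go purely lattice-theoretically: $\tors B$ is an interval of $\tors A$ (Jasso), $\tors A$ is trim (semidistributive and extremal), and intervals of trim lattices are trim, hence $\tors B$ is extremal. This is a valid shortcut, but note that the interval result is due to Thomas, \cite[Theorem 1]{Th}, not to Thomas--Williams. The paper's route is more uniform (one full-subcategory lemma handles all three cases) and records the stronger intermediate fact that $B$ itself is brick-directed, which in your version is only recovered afterwards from extremality of $\tors B$ via Proposition \ref{Prop: brick-directed equiv. extremal}; your route, in turn, avoids invoking the wide-subcategory realization of $\tau$-reduction at all.
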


Although some earlier studies of trim lattices in the context of lattice theory of torsion classes have already appeared in the literature, such results are primarily in the setting of representation-finite algebras. In \cite[Corollary 1.5]{TW} the authors showed that if $A$ is representation-directed, then $\tors A$ is a trim lattice. In fact, Theorem \ref{Thm: brick-directed, extremal, left modular, trimness} strengthens and greatly generalizes such former results and also leads to a lattice theoretical characterization of representation-directed algebras. 
Recall that for a finite lattice $(L,\leq)$, \emph{length} of $L$ is defined to be the length of a longest maximal chain in $L$.

\begin{corollary}[Cor. \ref{Cor: charact. of rep-directed via brick-directed}]\label{Cor: Characterization of rep-directed algebras}
Let $A$ be a brick-finite algebra. Then, $A$ is representation-directed if and only if $A$ is representation-finite and length of $\tors A$ is $|{\ind A}|$.
\end{corollary}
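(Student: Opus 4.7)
The plan is to derive this directly from Theorem \ref{Thm: brick-directed, extremal, left modular, trimness} (the equivalence of brick-directedness with extremality of $\tors A$) together with two standard auxiliary facts: (i) join-irreducible elements of $\tors A$ are in bijection with $\brick A$ (this is the brick labelling of $\Hasse(\tors A)$ alluded to in the excerpt), and (ii) any finite lattice of length $n$ has at least $n$ join-irreducibles.

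For the forward direction, assume $A$ is representation-directed. Representation-directedness classically forces $A$ to be representation-finite and, moreover, forces every indecomposable $A$-module to be a brick (since the absence of cycles among indecomposables rules out any non-invertible non-zero endomorphism of an indecomposable). Hence $|\brick A|=|\ind A|$. Representation-directed trivially implies brick-directed, so by Theorem \ref{Thm: brick-directed, extremal, left modular, trimness} the lattice $\tors A$ is extremal. By definition of extremality combined with the brick/join-irreducible bijection, the length of $\tors A$ equals $|\brick A|=|\ind A|$.

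For the converse, assume $A$ is representation-finite with $\text{length}(\tors A)=|\ind A|$. Since $A$ is brick-finite, $\tors A$ is a finite lattice, so fact (ii) and the bijection in (i) give $\text{length}(\tors A)\le |\brick A|$. Together with the obvious inequality $|\brick A|\le |\ind A|$, the hypothesis forces all three quantities to coincide:
\[
|\ind A|=\text{length}(\tors A)=|\brick A|.
\]
From $|\brick A|=|\ind A|$ we conclude that every indecomposable $A$-module is a brick. From $\text{length}(\tors A)=|\brick A|$ combined with the (dual) equality between the number of meet-irreducibles and $|\brick A|$, we see that $\tors A$ is extremal. Hence Theorem \ref{Thm: brick-directed, extremal, left modular, trimness} gives that $A$ is brick-directed. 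Since indecomposables and bricks coincide, any indecomposable cycle is a brick-cycle, so $A$ is in fact representation-directed.

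The only nontrivial point to justify carefully is the identification $\text{length}(\tors A)\le |\brick A|$ via the brick-labelling; once this is in hand, the argument is a tight double inequality and a direct invocation of Theorem \ref{Thm: brick-directed, extremal, left modular, trimness}. There is no genuine obstacle, as the brick labelling of $\Hasse(\tors A)$ and the standard lattice-theoretic bound on length by the number of join-irreducibles are both already cited in the preliminaries of the paper.
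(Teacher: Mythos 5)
Your proposal is correct and takes essentially the same approach as the paper: the same double inequality $\text{length}(\tors A) \le |\brick A| \le |\ind A|$, the same identification of $\brick A$ with the join- and meet-irreducibles of the finite lattice $\tors A$, and the same passage through the brick-directed/extremal equivalence. The only cosmetic difference is the final step of the converse, where you observe directly that cycles coincide with brick-cycles once $\ind A = \brick A$, whereas the paper instead invokes Demonet's correspondence to produce a total order on $\ind A$; both routes are valid and land in the same place.
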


In Section \ref{Subsection: Wall-chamber structure of brick-directed algebras}, we turn our attention to the characterization of brick-directed algebras via their wall-and-chamber structures. After a brief recollection of some standard tools from that setting, in the aforementioned section we introduce the notion of a (weakly) consistent sequence of elements in the Grothendieck group $K_0(\proj A)_{\mathbb{R}}$, and prove the following result. For the undefined terminology, we refer to Section \ref{Subsection: Wall-chamber structure of brick-directed algebras}.

 \begin{theorem}[Theorem \ref{Prop: walls ordered consistently}]\label{Thm: wall-chamber}
     Let $A$ be a brick-finite algebra. The following are equivalent:
\begin{enumerate}
    \item $A$ is brick-directed;
    \item $K_0(\proj A)_{\mathbb{R}}$ admits a consistent sequence;
    \item $K_0(\proj A)_{\mathbb{R}}$ admits a weakly consistent sequence.
\end{enumerate}
 \end{theorem}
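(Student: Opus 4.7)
The plan is to prove the equivalences cyclically as (1) $\Rightarrow$ (2) $\Rightarrow$ (3) $\Rightarrow$ (1). The implication (2) $\Rightarrow$ (3) is immediate from the definitions, since weak consistency is by design a weakening of consistency.

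For (1) $\Rightarrow$ (2), I would invoke Theorem~\ref{Thm: Characterization of brick-directed via maximal chain of brick-splitting pairs} to produce a maximal chain $0 = \mathcal{T}_0 \subsetneq \mathcal{T}_1 \subsetneq \cdots \subsetneq \mathcal{T}_n = \modu A$ in $\tors A$ whose members are all brick-splitting. Since $A$ is brick-finite, $\tors A$ is finite and each covering $\mathcal{T}_{i-1} \lessdot \mathcal{T}_i$ is labeled by a unique brick $B_i$, with the labels exhausting $\brick A$ as $i$ ranges over $\{1,\dots,n\}$. The brick-splitting property of each $\mathcal{T}_i$ means that the corresponding stability wall separates the bricks not yet labeled from those already labeled in a coherent way. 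Geometrically, traversing this maximal chain of torsion classes corresponds to following a generic oriented path in $K_0(\proj A)_{\mathbb{R}}$ that crosses each wall $\{B_i\}$ exactly once in the order $i=1,\dots,n$; this is the desired consistent sequence.

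The core of the proof is (3) $\Rightarrow$ (1), which I would establish by contraposition. Suppose $A$ is not brick-directed, so a brick-cycle $X_0 \xrightarrow{f_1} X_1 \xrightarrow{f_2} \cdots \xrightarrow{f_m} X_m \cong X_0$ exists, with each $f_j$ nonzero and non-invertible. The key step is to translate each such morphism between two bricks into a geometric incidence relation between the associated walls: a nonzero non-isomorphism $X_{j-1} \to X_j$ forces the wall $\Theta_{X_j}$ to lie on a prescribed side of $\Theta_{X_{j-1}}$ (in the sense that no weakly consistent ordering can put $X_j$ before $X_{j-1}$). Composing these $m$ incidence relations around the cycle returns to $X_0$ and produces a circular constraint incompatible with any linearization, contradicting the existence of a weakly consistent sequence.

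The main obstacle will be this final implication: making precise the claim that a nonzero non-invertible morphism between bricks forces an antisymmetric compatibility between the associated walls in any weakly consistent ordering. This requires the right interaction between Hom-spaces of bricks and stability conditions, and I expect it to rely on the identification of walls with semistable subcategories (so that a map $X_{j-1} \to X_j$ detects that $X_{j-1}$ is destabilized on one side of $\Theta_{X_j}$). Once that wall-theoretic lemma is in place, the cycle argument closes routinely, and combined with the previous two implications we obtain the three-way equivalence.
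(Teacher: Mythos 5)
Your overall structure (prove (1)\,$\Rightarrow$\,(2) from the maximal chain of brick-splitting torsion classes, note (2)\,$\Rightarrow$\,(3) is formal, then close the cycle with (3)\,$\Rightarrow$\,(1)) matches the paper, and the first two steps are fine in spirit, although for (1)\,$\Rightarrow$\,(2) you should explicitly cite the result (Proposition~\ref{Prop: functorially finite torsion pairs are semistable}) that produces, from each cover $\mathcal{T}_{i-1}\lessdot\mathcal{T}_i$ labeled by $X_i$, a point $\theta_i$ with $\overline{\mathcal{T}}_{\theta_i}=\mathcal{T}_i$, $\mathcal{T}_{\theta_i}=\mathcal{T}_{i-1}$, and $X_i$ $\theta_i$-stable; your ``generic oriented path crossing each wall once'' is the picture but not the argument.

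The genuine gap is in (3)\,$\Rightarrow$\,(1). You want to prove, by contraposition, a wall-incidence lemma to the effect that a nonzero non-isomorphism $X_a \to X_b$ forces a definite relative order of $a$ and $b$ in any weakly consistent sequence. First, you have the direction reversed: by Proposition~\ref{Prop: labels distinct in a path}, along a maximal chain the \emph{target} of a nonzero map must appear before the \emph{source}, so $\Hom_A(X_a,X_b)\ne 0$ should force $b<a$, not the other way around. More importantly, the lemma is not as routine as you suggest. If $f:X_a\to X_b$ is nonzero with $a<b$ and $I=\operatorname{im} f$, the image argument only gives a contradiction when $f$ is injective (then $I=X_a$ is a proper submodule of $X_b$, so $\theta_b([X_a])<0$ contradicts the weak consistency inequality $\theta_b(X_a)\ge 0$) or surjective (dually using $\theta_a$). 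When $I$ is a proper nonzero submodule of $X_b$ and a proper nonzero quotient of $X_a$, stability only yields $\theta_a([I])>0$ and $\theta_b([I])<0$, which is no contradiction at all since $I$ need not be a brick and neither $\theta_a$ nor $\theta_b$ is constrained on $[I]$ by the definition of a weakly consistent sequence. The paper sidesteps exactly this obstacle by a different mechanism: it sets $\mathcal{T}_i:=\overline{\mathcal{T}}_{\theta_i}$ and proves \emph{by induction} that $\mathcal{T}_i=\Filt(X_1\oplus\cdots\oplus X_i)$, using that any quotient $N$ of $X_j$ (for $j<i$) already lies in $\Filt(X_1\oplus\cdots\oplus X_j)$ so that $\theta_i([N])\ge 0$ follows from condition (2). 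The resulting maximal chain of torsion classes of length $|\brick A|$ then gives brick-directedness via Theorem~\ref{Thm: Brick-directed & maximal chain of brick-splitting pairs}. Without this inductive control of the Filt-categories, the Hom-vanishing you need does not follow from the definitions, so your sketch for (3)\,$\Rightarrow$\,(1) cannot close as written.
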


Our next result states a remarkable property of bricks over brick-directed algebras. More precisely, from Theorems \ref{Thm: brick-splitting torsion class characterization} and \ref{Thm: brick-directed, extremal, left modular, trimness}, and some further observations, in Section \ref{Section: Trim lattices in representation theory} we show the following uniqueness property that generalizes some classical results. We particularly note that every representation-directed algebra satisfies the assumptions of the following statement.

\begin{corollary}[Cor. \ref{Cor: unique dim vector}]\label{Cor: Uniqueness of dim vector of bricks}
Let $A$ be a brick-finite algebra. If $A$ is brick-directed, then each brick in $\modu A$ is uniquely determined by its dimension vector, that is, for two distinct elements $X$ and $Y$ in $\brick A$, we have $\operatorname{\underline{dim}} X\neq \operatorname{\underline{dim}} Y$.
\end{corollary}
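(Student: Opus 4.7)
The plan is to combine the maximal chain of brick-splitting torsion classes given by Theorem \ref{Thm: Characterization of brick-directed via maximal chain of brick-splitting pairs} with the precise length statement from Theorem \ref{Thm: brick-directed, extremal, left modular, trimness} and the brick-labeling of Hasse edges. Fix a maximal chain $0 = \mathcal{T}_0 \lessdot \mathcal{T}_1 \lessdot \cdots \lessdot \mathcal{T}_n = \modu A$ in $\tors A$ with every $\mathcal{T}_i$ brick-splitting; by the final assertion of Theorem \ref{Thm: brick-directed, extremal, left modular, trimness}, it has length $n = |\brick A|$, so its edges are labeled bijectively by $\brick A$. Writing $B_i$ for the label of $\mathcal{T}_{i-1} \lessdot \mathcal{T}_i$, each brick $B$ has a unique position $\phi(B) = i$ with $B = B_i$.

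Assume for contradiction that distinct bricks $B \neq B'$ satisfy $\underline{\dim} B = \underline{\dim} B'$, and without loss of generality take $i := \phi(B) < \phi(B') =: j$. Then $B \in \mathcal{T}_i \subseteq \mathcal{T}_{j-1}$, while $B' = B_j$ is the unique brick of $\mathcal{T}_j \setminus \mathcal{T}_{j-1}$; since $\mathcal{T}_{j-1}$ is brick-splitting, this forces $B \in \mathcal{T}_{j-1}$ and $B' \in \mathcal{F}_{j-1} := \mathcal{T}_{j-1}^{\perp}$. I intend to derive a contradiction by producing a linear functional on dimension vectors that strictly sign-separates $\underline{\dim} B$ from $\underline{\dim} B'$, which is then incompatible with their equality.

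For this final step I would invoke the wall-and-chamber framework of Theorem \ref{Thm: wall-chamber}: since $A$ is brick-finite, $\mathcal{T}_{j-1}$ corresponds to an open full-dimensional chamber $C$ in the arrangement of brick-hyperplanes $H_X = \{\theta \in K_0(\proj A)_{\mathbb{R}} : \theta(\underline{\dim} X) = 0\}$. Every $\theta \in C$ satisfies $\theta(\underline{\dim} X) \neq 0$ for all bricks $X$, and the sign of $\theta(\underline{\dim} X)$ is constant on $C$ and records whether $X \in \mathcal{T}_{j-1}$ or $X \in \mathcal{F}_{j-1}$, via the standard King-stability description of functorially finite torsion classes. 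Applied to $X = B$ and $X = B'$ this yields $\theta(\underline{\dim} B)$ and $\theta(\underline{\dim} B')$ of strictly opposite signs, contradicting $\underline{\dim} B = \underline{\dim} B'$. The main obstacle I expect is pinning down the sign-dichotomy cleanly for brick-splitting torsion pairs: for a general torsion class, membership of a brick depends on more than the sign of $\theta$ on its dim vector, but for brick-splitting pairs the left-modularity characterization (Theorem \ref{Thm: brick-splitting torsion class characterization}(3)) together with the consistent sequence supplied by Theorem \ref{Thm: wall-chamber} should give exactly the required sign separation; this is presumably among the ``further observations'' alluded to in the statement's preamble.
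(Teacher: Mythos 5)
Your proposal is essentially correct and follows the same route as the paper: reduce to two distinct bricks $B,B'$ sitting on opposite sides of one of the brick-splitting torsion pairs $(\mathcal{T}_{j-1},\mathcal{F}_{j-1})$ in the maximal chain, and then derive a contradiction from the sign behavior of a suitable $\theta\in K_0(\proj A)_{\mathbb R}$. The only substantive difference is cosmetic: the paper's proof (using Proposition \ref{Prop: functorially finite torsion pairs are semistable}, as in Theorem \ref{Prop: walls ordered consistently}) chooses $\theta$ on the wall $\Theta_{B'}$, getting $\theta([B'])=0$ while $\theta([B])<0$ (or, with their indexing, $\theta_l([X])\ge 0$ and $\theta_l([Y])<0$); you instead place $\theta$ in the open chamber of $\mathcal{T}_{j-1}$, getting $\theta([B])>0$ and $\theta([B'])<0$. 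Both work for the same reason.

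One remark on your stated ``main obstacle.'' The sign dichotomy you worry about is not an issue, because you never need the converse implication (sign determines membership). You only need the forward one, which is immediate from the definitions: if $B\in\mathcal T_\theta$ then taking $N=B$ as a nonzero quotient gives $\theta([B])>0$, and if $B'\in\mathcal F_\theta$ then taking $L=B'$ as a nonzero submodule gives $\theta([B'])<0$. No appeal to left modularity or to Theorem \ref{Thm: wall-chamber} is needed here beyond the standard fact (for a brick-finite $A$) that every torsion class is $\overline{\mathcal T}_\theta=\mathcal T_\theta$ for $\theta$ in the corresponding open chamber. Also, your parenthetical claim that every $\theta$ in the chamber has $\theta([X])\neq 0$ for all bricks $X$ is not justified (chambers avoid the walls $\Theta_X$, not the full hyperplanes $\mathcal H_X$), but the argument does not use it. With these cosmetic points aside, your plan matches the paper's proof.
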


We remark that the converse of the previous corollary is not true in general. Meanwhile, let us highlight two interesting consequences of the preceding corollary, which also motivate some related properties that can be studied for arbitrary brick-directed algebras (for details, see Remark \ref{Rem: Unique dim vect but not brick-direcetd}). 
Working with a brick-finite algebra, say $A$,  Corollary \ref{Cor: Uniqueness of dim vector of bricks} implies the following.
On the one hand, it provides a simple necessary condition for the brick-directedness: If two distinct bricks have the same dimension vector, there exists a cycle of bricks, hence $A$ is not brick-directed. 
On the other hand, this corollary implies a strong condition on the module varieties of brick-directed algebras: If $A$ is brick-directed, for each dimension vector $\underline{d} \in \mathbb{Z}^n_{\geq 0}$, the variety $\modu(A,\underline{d})$ contains at most one brick component. 

\medskip

The last main result that we present here gives another characterization of brick-directed algebras in terms of a certain Newton polytope naturally associated to any brick-finite algebra. For undefined terminology and notation, we refer to Section \ref{Section: Trim lattices in representation theory}.

\begin{theorem}[Theorem \ref{Thm: Newton polytope}]\label{Newton polytope thm in Introduction}
Let $A$ be brick-finite, and set $M$ as the direct sum of $X \in \brick A$. Then, $A$ is brick directed if and only if 
there exists an indivisible increasing path in $\mathrm{N}(M)$ from $0$ to $[M]$.
\end{theorem}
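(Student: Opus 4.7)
The plan is to translate the geometric condition on $\mathrm{N}(M)$ into the lattice-theoretic characterization of brick-directedness from Theorem \ref{Thm: Characterization of brick-directed via maximal chain of brick-splitting pairs}. Concretely, for any indivisible increasing path from $0$ to $[M]$, I would identify its vertices with dimension vectors of certain brick-splitting torsion classes and its indivisible edges with Hasse covers labelled by a single brick in the sense of Theorem \ref{Thm: brick-splitting torsion class characterization}. Both sides carry a natural ``brick content'' whose sum of dimension vectors is the relevant lattice point, and the wall-and-chamber picture of Theorem \ref{Thm: wall-chamber} will supply the stability functionals needed to recognise the relevant vertices and edges of $\mathrm{N}(M)$.

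For the forward direction, I would assume $A$ is brick-directed and invoke Theorem \ref{Thm: Characterization of brick-directed via maximal chain of brick-splitting pairs} to obtain a maximal chain $0 = \mathcal{T}_0 \lessdot \mathcal{T}_1 \lessdot \cdots \lessdot \mathcal{T}_n = \modu A$ of brick-splitting torsion classes. The Hasse-arrow labelling of Theorem \ref{Thm: brick-splitting torsion class characterization} labels each cover $\mathcal{T}_{i-1} \lessdot \mathcal{T}_i$ by a single brick $B_i$, and brick-splittingness forces $\mathcal{T}_i \cap \brick A = \{B_1, \ldots, B_i\}$. Setting $v_i := \sum_{j \leq i} \underline{\dim} B_j$, I would show that each $v_i = \underline{\dim}\bigl(\bigoplus_{j \leq i} B_j\bigr)$ is a vertex of $\mathrm{N}(M)$, exhibiting a supporting hyperplane from a stability function that separates $\mathcal{T}_{i-1}$ from $\brick A \setminus \mathcal{T}_{i-1}$ (available by Theorem \ref{Thm: wall-chamber}), and that the connecting edge with direction $\underline{\dim} B_i$ is indivisible, using Corollary \ref{Cor: Uniqueness of dim vector of bricks} to rule out any refinement of the step into shorter increasing segments inside $\mathrm{N}(M)$.

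For the converse, I would start with an indivisible increasing path $0 = v_0, v_1, \ldots, v_n = [M]$ in $\mathrm{N}(M)$. Indivisibility combined with $[M] = \sum_{B \in \brick A} \underline{\dim} B$ forces the step vectors $v_i - v_{i-1}$ to form a permutation $(\underline{\dim} B_1, \ldots, \underline{\dim} B_n)$ of the brick dimension vectors. For each $i$, the vertex $v_i$ of $\mathrm{N}(M)$ provides a stability condition cutting out a torsion class $\mathcal{T}_i$ with $\mathcal{T}_i \cap \brick A = \{B_1, \ldots, B_i\}$, and I would check via Theorem \ref{Thm: brick-splitting torsion class characterization} that each such $\mathcal{T}_i$ is brick-splitting. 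The resulting chain $\mathcal{T}_0 \subset \mathcal{T}_1 \subset \cdots \subset \mathcal{T}_n$ has length $|\brick A|$ and is therefore maximal in $\tors A$, so Theorem \ref{Thm: Characterization of brick-directed via maximal chain of brick-splitting pairs} yields brick-directedness.

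The main obstacle will be making the dictionary between vertices and edges of $\mathrm{N}(M)$ and brick-splitting torsion classes fully rigorous. Specifically, I need to show that any vertex of $\mathrm{N}(M)$ encountered on an indivisible increasing path is of the form $\sum_{B \in S}\underline{\dim} B$ for some $S \subseteq \brick A$ occurring as $\mathcal{T} \cap \brick A$ for a brick-splitting $\mathcal{T}$, and that indivisibility of the connecting edge matches exactly the statement that the corresponding cover in the spine of $\tors A$ is labelled by a single brick. The stability-function formalism underlying Theorem \ref{Thm: wall-chamber}, together with Corollary \ref{Cor: Uniqueness of dim vector of bricks} to control primitivity of the edge directions, will be the main tools for carrying this out.
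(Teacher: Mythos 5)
Your overall plan (translate the polytope condition to the existence of a maximal chain of length $|\brick A|$ in $\tors A$, then invoke Theorem \ref{Thm: Brick-directed & maximal chain of brick-splitting pairs}) matches the high-level route of the paper, but the paper packages almost all of the geometric content into Proposition \ref{Prop: Newton polytopes} (cited from the literature): vertices of $\mathrm{N}(M)$ are in bijection with torsion classes via $\mathcal{T}\mapsto[t_\mathcal{T}(M)]$, increasing paths from $0$ to $[M]$ correspond to maximal chains, and each edge vector is $v_i - v_{i-1}\in\mathbb{Z}_{\ge 1}[X_i]$ for the brick label $X_i$. Given that, the proof is a two-line chain of equivalences: brick-directed $\iff$ maximal chain of length $|\brick A|$ (Proposition \ref{Prop: brick-directed equiv. extremal}) $\iff$ increasing path of length $|\brick A|$ (Proposition \ref{Prop: Newton polytopes}(2)) $\iff$ indivisible increasing path (Lemma \ref{Lemma: indivisible paths}). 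You are, in effect, trying to re-derive Proposition \ref{Prop: Newton polytopes} by hand via stability functionals, which is substantially more work and is where your proposal has problems.

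The concrete gap is in the treatment of indivisibility. In the paper, ``indivisible'' means each step vector $v_i - v_{i-1}$ is a primitive element of the lattice $K_0(\modu A)$, not that the step cannot be refined by inserting more vertices of $\mathrm{N}(M)$; your phrase ``rule out any refinement of the step into shorter increasing segments'' conflates these. Corollary \ref{Cor: Uniqueness of dim vector of bricks} only gives you that distinct bricks have distinct dimension vectors, which says nothing about whether a single $[B_i]$ is primitive. The paper gets primitivity of $[X_i]$ from Lemma \ref{Lemma: relative-interior of walls}(2) (the dimension vectors of the simple objects in $\mathcal{W}_\theta$ extend to a $\mathbb{Z}$-basis), and then gets $m_i=1$ for each $i$ by the counting identity $\sum_{X\in\brick A}[X] = [M] = \sum_i m_i[X_i]$ together with distinctness of the labels (Proposition \ref{Prop: labels distinct in a path}). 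Similarly, your converse direction asserts that indivisibility plus $\sum(v_i-v_{i-1})=[M]$ ``forces the step vectors to form a permutation of the brick dimension vectors''; that inference is unjustified without first knowing each step lies in $\mathbb{Z}_{\ge1}[X_i]$ for some brick $X_i$, which is precisely the structural input of Proposition \ref{Prop: Newton polytopes}(3). Unless you first establish the path--chain dictionary and the shape of its edge vectors, the indivisibility hypothesis has no traction.
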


We end this section with a remark that puts our work in a broader perspective.

\begin{remark}
As observed above, using brick-finite brick-directed algebras, we gave a full characterization of trim lattices in the context of lattice of torsion classes. Meanwhile, we emphasize that some of our main results on brick-directed algebras are independent of the finiteness assumption on the lattice $\tors A$. In particular, there exist many examples of brick-directed algebras for which $\tors A$ is an infinite lattice (see Corollary \ref{Cor on various families of brick-directed algebras}). 
Hence, in the context of lattice theory of torsion classes, the notion of brick-directed algebras can be seen as a generalization of trimness from the setting of finite semidistributive lattices to that of infinite semidistributive lattices. 
Thus, our work can potentially pave the way for the study of some fundamental lattice-theoretical phenomena. More specifically, thanks to the recent results of \cite{TW} and \cite{Mu}, it is known that for any finite semidistributive lattice $(L,\leq)$, the notions of extremality and left modularity are equivalent (see \cite[Corollary 3.5]{Mu}). Hence, in light of Theorems \ref{Thm: Characterization of brick-directed via maximal chain of brick-splitting pairs} and \ref{Thm: brick-directed, extremal, left modular, trimness}, it is natural to investigate the notions of extremality and left modularity over infinite semidistributive lattices.
\end{remark}

\medskip

\noindent \textbf{Notations and Setting.}\label{Notation & Setting}
Throughout, $k$ denotes a field, and $A$ is always assumed to be a finite dimensional associative $k$-algebra with multiplicative identity. Without loss of generality, $A$ is always assumed to be basic. By $\{e_1,\ldots, e_n\}$ we denote a complete set of primitive orthogonal idempotents, and we let $\{S_1, \ldots, S_n\}$ be a complete set of non-isomorphic simple $A$-modules where $e_iS_j \ne 0$ if and only if $i=j$, for $1\leq i, j \leq n$. Moreover, for $i=1,2,\ldots,n$, we denote by $P_i = Ae_i$ the projective cover of $S_i$.

By $\Omega = \Omega_A$ we denote the (unvalued version of the) \emph{Ext-quiver} of algebra $A$ whose vertex set is given by $\{S_1, \ldots, S_n\}$ and there exists a single arrow $S_i \to S_j$ if and only if $\Ext^1_A(S_i, S_j) \ne 0$. When $A$ is elementary, that is when all $S_i$ are one dimensional over $k$, we can also associate to $A$ its \emph{ordinary quiver} $Q =Q_A$. It has the same vertex set as $\Omega$ and there are $\dim_k\Ext^1_A(S_i, S_j)$ many arrows from $S_i$ to $S_j$ in $Q$. In such a case, $A \cong kQ/I$ for some admissible ideal $I$. Note that when $k$ is algebraically closed and $A$ is basic, then $A$ is necessarily elementary. We observe the following fact about the Ext-quiver of $A$. If there is no path from $S_i$ to $S_j$ in $\Omega$, then $\Hom_A(Ae_j, Ae_i)=0$. In particular, the algebra $A$ is connected if and only if $\Omega$ is connected.
For more details on the Ext-quiver, see \cite[III.1]{ARS}.

By $\modu A$ we denote the category of all finitely generated left $A$-modules. If $A \cong kQ/I$, then each $X$ in $\modu A$ can be seen as a finite dimensional representation of $(Q,I)$. Unless specified otherwise, modules are always considered up to isomorphism. Let $\ind A$ denote the set of all isomorphism classes of indecomposable objects in $\modu A$, and let $\Gamma_A$ denote the Auslander-Reiten quiver of $\modu A$. For all rudiments of representation theory, we refer to \cite{ARS} and \cite{ASS}.

\section{Preliminaries and Background} \label{Section: Preliminaries and Background}

\stoptoc

\subsection{Lattice theory}\label{Subsection: Lattice theory}
In this subsection, we recall only some facts from lattice theory that are used in our work. For the most part, we only provide references. For all the rudiments of lattice theory, we refer to \cite{Gr} and \cite{DP}.

Let $(P,\leq)$ be a non-empty (possibly infinite) partially ordered set. Recall that $x<z$ in $P$ is said to be a \emph{cover} relation, and it is denoted by $x\lessdot z$, provided that for each $y\in P$, if $x\leq y \leq z$, we have either $x=y$ or $y=z$. Furthermore, the \emph{Hasse quiver} of $P$, denoted by $\Hasse(P)$, is a directed graph which has $P$ as the vertex set, and arrows $x\rightarrow z$ for each cover relation $x\lessdot z$. 

We also recall that $(P,\leq)$ is said to be a \emph{complete lattice} if for any subset $S$ in $P$, there exists a unique element of $P$, smallest with the property of being larger than or equal to all elements of $S$, called the \emph{join} of $S$ and denoted by $\bigvee S$; as well as a unique element of $P$, largest with the property of being smaller than or equal to all elements of $S$, called the \emph{meet} of $S$ and denoted by $\bigwedge S$. We often use $(L,\leq)$ to denote a lattice, and for $x$ and $y$ in a lattice $L$, the join and meet are respectively denoted by $x\vee y$ and $x\wedge y$. 

In the following, $(L,\leq)$ always denotes a complete lattice, and $1$ and $0$ respectively denote the greatest element and the least element of $L$ (also known as the maximum and minimum of $L$). Whenever there is no confusion, we suppress $\leq$ in the notation and simply use $L$ instead of $(L,\leq)$. Recall that a lattice $L$ is said to be \emph{weakly atomic} if for each $x<y$ in $L$, there is at least one arrow in $\Hasse[x,y]$.

A sequence $x_0<x_1<\cdots<x_i<\cdots$ in $L$ is called a \emph{chain}, and is said to be finite if the sequence consists of only finite many elements. In particular, the \emph{length} of $x_0<x_1<\cdots<x_r$ is defined to be $r$. Moreover, the length of the lattice $L$ is defined as the maximum of the lengths of finite chains, if there exists such a maximum. Otherwise, $L$ is said to be of infinite length. A chain $x_0<x_1<\cdots<x_i<\cdots$ is called \emph{maximal} if it cannot be further refined to a chain by inserting more elements, which is the case if and only if $x_j\lessdot x_{j+1}$, for each $j$.

An element $x \in L$ is called \emph{join-irreducible} if, for each $y$ and $z$ in $L$ with $y, z <x$, we have $x\neq y\vee z$. By convention, $0$ is not considered as a join-irreducible element. 
Provided $L$ is a finite lattice, $x$ is join-irreducible if and only if
it covers exactly one element of $L$. Moreover, $x \in L$ is said to be \emph{completely join-irreducible} if for any subset $S\subseteq \{y \in L \mid y<x\}$, we have $x \neq \bigvee_{y\in S} y$. By $\JI(L)$ we denote the set of all join-irreducible elements in $L$, and $\JI^c(L)$ denotes the subset of $\JI(L)$ consisting of completely join-irreducible elements.
The notion of (completely) meet-irreducible elements, and consequently the sets $\MI(L)$ and $\MI^c(L)$, are defined dually. Note that, by convention, $1$ does not belong to $\MI(L)$. 

A lattice $L$ is called \emph{semidistributive} if, for all $x, y, z \in L$, if $x\vee y = x\vee z$, then $ x \vee (y \wedge z)=x \vee y$, and, furthermore, $x\wedge y = x\wedge z$ implies $x \wedge (y \vee z)=x \wedge y$. 
Consequently, for any cover relation $x\lessdot y$, the set $\{a \in L \,|\, x\vee a=y$\} has a unique minimal element.
For a lattice $L$, and a pair of elements $y\leq z$ in $L$, it is easy to see that the inequality $(y\vee x)\wedge z \geq y \vee (x\wedge z)$ holds for all $x\in L$. This is known as \emph{modular inequality}. 
Motivated by this, an element $x$ in $L$ is called \emph{left modular} if for each $y < z$ in $L$, we have $(y\vee x)\wedge z = y\vee (x\wedge z)$. For more details on the left modular elements of lattices, see \cite{LS}. 
As shown in \cite[Lemma 2.13]{TW}, under the assumption that $L$ is weakly atomic, to verify the left modularity for $x \in L$, one only needs to verify $(y\vee x)\wedge z \le y \vee (x\wedge z)$ for the cover relations $y\lessdot z$ in $L$. 

\medskip

In the rest of this subsection, we restrict ourselves to the setting of finite lattices, particularly to recall some important generalizations of distributive lattices that play crucial roles in our studies. We note that, according to the current literature, many of these notions have been considered only for finite lattices. 

Let $L$ be a finite lattice of length $m$. Observe that $|{\JI(L)}|\geq m$ and $|{\MI(L)}|\geq m$, because $L$ must have at least $m$ join-irreducible elements and at least $m$ meet-irreducible elements. In fact, $L$ is known to be distributive if and only if every maximal chain in $L$ is of length $m=|{\JI(L)}|=|{\MI(L)}|$ (see \cite[Theorem 17]{Ma}). 

As a generalization of distributive lattices, following \cite{Ma}, a finite lattice of length $m$ is called \emph{extremal} if $|{\JI(L)}|=|{\MI(L)}|=m$. In particular, unlike distributive lattices, an extremal lattice may admit a maximal chain of length less than $m$ (see Example \ref{Example: Non-trivial brick-splitting torsion pairs}). For an extremal lattice $L$, \emph{spine} of $L$ consists of all elements of $L$ that lie on a chain of maximal length. The spine of an extremal lattice $L$ is known to be a distributive sublattice of $L$ (see \cite[Prop. 2.6]{TW}). 

Another generalization of distributive lattices is given in terms of left modular elements. A finite lattice $L$ is called \emph{left modular} if it has a maximal chain of left modular elements. Such lattices, at least in this terminology, seem to have appeared first in \cite{BS}. As shown later, left modular lattices can be characterized in terms of certain labelings (for details, see \cite[Section 2.5]{TW} and references therein).

Since the extremal and left modular lattices give two interesting generalizations of the finite distributive lattices, it is natural to ask about the intersection of these two families. Following \cite{Th}, we say $L$ is a \emph{trim} lattice if $L$ is simultaneously extremal and left modular. The family of trim lattices properly contains that of finite distributive lattices, yet they manifest some important properties similar to those of distributive lattices. In particular, $L$ is distributive if and only if $L$ is a graded trim lattice (\cite[Theorem 2]{Th}). 
As shown more recently in \cite{TW}, trim lattices admit many interesting combinatorial properties. 

Now we recall some important results of \cite{TW} and \cite{Mu} that relate to our work.

\begin{theorem}[{\cite[Theorem 1.4]{TW}} {\cite[Theorem 3.2]{Mu}}]\label{Thm: extremal & semidistributive is trim}
Let $(L,\leq)$ be a finite semidistributive lattice. 
Then the following are equivalent:
\begin{enumerate}
    \item $L$ is left modular;
    \item $L$ is extremal;
    \item $L$ is a trim lattice.
\end{enumerate}\end{theorem}

Using a more algebraic approach, and via our new notion of brick-splitting torsion pairs, we give a new proof and a novel realization of the above equivalences in the context of lattice of torsion classes of a brick-finite algebras (for details, see Section \ref{Section: Trim lattices in representation theory}). 
Meanwhile, for a more detailed treatment of the notions of semidistributivity, extremality, and left modularity of (finite) lattices, and their relationship with some other lattice-theoretical properties, we refer to \cite[Figure 2]{Mu}.

We note that, without semidistributvity,  the notions of extremality and left modularity do not imply each other. More explicitly, there exist extremal lattices that are not left modular, and there also exist left modular lattices that are not extremal (see \cite[Figure 5]{TW}). Moreover, as shown in \cite[Figure 3]{TW}, there exist trim lattices that are not semidistributive, and there also exist semidistributive lattices that are not trim.

\subsection{Torsion theories} Throughout, $A$ always denotes a basic connected $k$-algebra satisfying the assumptions from Section \ref{Sec: Summary of Main Results}. In the following, by a subcategory $\mathcal{C}$ of $\modu A$, we always assume that $\mathcal{C}$ is full and closed under isomorphism classes. Moreover, we set $\mathcal{C}^\perp:=\{X \in \modu A\,|\, \Hom_A(C,X)=0, \text{ for every } C \in \mathcal{C}\}$, and ${}^\perp \mathcal{C}$ is defined dually. 
Recall that a subcategory $\mathcal{C}$ is said to be \emph{extension-closed} if for each short exact sequence $0\rightarrow L \rightarrow M \rightarrow N \rightarrow 0$ in $\modu A$, if $L$ and $N$ belong to $\mathcal{C}$, then so does $M$. 

If $\mathcal{T}$ and $\mathcal{F}$ are two subcategories of $\modu A$, then $(\mathcal{T},\mathcal{F})$ is called a \emph{torsion pair} in $\modu A$ provided the following conditions hold:
\begin{enumerate}
    \item $\mathcal{T}$ and $\mathcal{F}$ have only $0$ in common.
    \item $\mathcal{T}$ is closed under taking factors and $\mathcal{F}$ is closed under taking submodules.
    \item For each $M$ in $\modu A$, there exists a unique submodule $t(M)$ of $M$ such that $0 \rightarrow t(M) \rightarrow M \rightarrow M/t(M)\rightarrow 0$ is exact with $t(M) \in \mathcal{T}$ and $M/t(M) \in \mathcal{F}$.
\end{enumerate}

If $(\mathcal{T},\mathcal{F})$ is a torsion pair in $\modu A$, the subcategories $\mathcal{T}$ and $\mathcal{F}$ are respectively called the \emph{torsion class} and the \emph{torsion-free class}. Consequently, the modules in $\mathcal{T}$ are called torsion modules and those belonging to $\mathcal{F}$ are known as torsion-free modules. Note that  both $\mathcal{T}$ and $\mathcal{F}$ are extension-closed, and each one uniquely determines the pair, because $\mathcal{F}=\mathcal{T}^\perp$ and $\mathcal{T}= {}^\perp\mathcal{F}$ (for more details, see \cite[Chapter VI.1]{ASS}). 
Since $(0,\modu A)$ and $(\modu A, 0)$ trivially form torsion pairs, we say $(\mathcal{T},\mathcal{F})$ is a non-trivial torsion pair in $\modu A$  provided $\mathcal{T}\neq 0$ and $\mathcal{T}\neq \modu A$. 

For a set of modules in $\modu A$, the following lemma gives an explicit description of the smallest torsion class in $\tors A$ containing them. Before we state this fact, let us remark that, for a subcategory $\mathcal{C}$ of $\modu A$, the smallest extension-closed subcategory of $\modu A$ that contains $\mathcal{C}$ consists of all modules in $\modu A$ which have a filtration by the objects in $\mathcal{C}$. Henceforth, we denote this category by $\Filt\mathcal{C}$. Moreover, for a subcategory $\mathcal{C}$ of $\modu A$, let $\Fac\mathcal{C}$ denote the full subcategory of $\modu A$ consisting of all $N$ in $\modu A$ which are quotients of a finite direct sum of objects in $\mathcal{C}$.

For a subcategory $\mathcal{C}$ of $\modu A$, we set 
$$\mathcal{T(\mathcal{C}}):=\Filt(\Fac\mathcal{C})\ \mbox{ and }\ 
\mathcal{F(\mathcal{C}}):=\Filt(\Sub\mathcal{C}).$$
 Thus each $M$ in $\mathcal{T}(\mathcal{C})$ has a filtration $0= M_0 \subseteq M_1 \subseteq \cdots \subseteq M_{d-1} \subseteq M_d=M$ such that for every $1 \leq i \leq d$, there exists an epimorphism $\psi_i:C_i\twoheadrightarrow M_{i}/M_{i-1}$ for some $C_i$ which is a finite direct sum of objects from $\mathcal{C}$. The following lemma is well known; for instance see \cite[Lemma 3.1]{MS}. This lemma particularly implies that each torsion (and similarly torsion-free) class is uniquely determined by the indecomposable modules it contains.
 
\begin{lemma}\label{Smallest Torsion Class}
For a subcategory $\mathcal{C}$ of $\modu A$, the smallest torsion class in $\modu A$ containing $\mathcal{C}$ is $\mathcal{T}(\mathcal{C})$, and the smallest torsion-free class containing $\mathcal{C}$ is $\mathcal{F}(\mathcal{C})$.
\end{lemma}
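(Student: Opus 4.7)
The plan is to verify the two characterizations separately, treating the torsion class case in detail and then noting that the torsion-free case follows by duality. For the torsion class claim, I want to show three things about $\mathcal{T}(\mathcal{C}) = \Filt(\Fac\,\mathcal{C})$: it contains $\mathcal{C}$, it is a torsion class, and it is contained in every torsion class containing $\mathcal{C}$.

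First, the containment $\mathcal{C}\subseteq\mathcal{T}(\mathcal{C})$ is essentially tautological, since $\mathcal{C}\subseteq\Fac\,\mathcal{C}\subseteq\Filt(\Fac\,\mathcal{C})$. Next, closure under extensions is immediate from the definition of $\Filt$: given a short exact sequence $0\to L\to M\to N\to 0$ with $L,N\in\Filt(\Fac\,\mathcal{C})$, concatenating a filtration of $L$ with the preimage of a filtration of $N$ produces a filtration of $M$ by objects of $\Fac\,\mathcal{C}$. The minimality assertion is also straightforward: if $\mathcal{T}'$ is any torsion class containing $\mathcal{C}$, then since $\mathcal{T}'$ is closed under factor modules it contains $\Fac\,\mathcal{C}$, and since it is closed under extensions it contains $\Filt(\Fac\,\mathcal{C})=\mathcal{T}(\mathcal{C})$.

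The step that requires actual work is closure of $\mathcal{T}(\mathcal{C})$ under taking factor modules, and this is where I expect to spend the most care. Given $M\in\mathcal{T}(\mathcal{C})$ with filtration $0=M_0\subseteq M_1\subseteq\cdots\subseteq M_d=M$ whose subquotients $M_i/M_{i-1}$ lie in $\Fac\,\mathcal{C}$, and given a quotient $\pi\colon M\twoheadrightarrow N$, I will consider the induced filtration $0=\pi(M_0)\subseteq\pi(M_1)\subseteq\cdots\subseteq\pi(M_d)=N$. For each $i$, the surjection $M_i\twoheadrightarrow \pi(M_i)$ sends $M_{i-1}$ into $\pi(M_{i-1})$, so it induces a surjection $M_i/M_{i-1}\twoheadrightarrow \pi(M_i)/\pi(M_{i-1})$. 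Since $\Fac\,\mathcal{C}$ is visibly closed under taking factor modules, each $\pi(M_i)/\pi(M_{i-1})\in\Fac\,\mathcal{C}$, and therefore $N\in\Filt(\Fac\,\mathcal{C})=\mathcal{T}(\mathcal{C})$. Together with the previous steps this establishes that $\mathcal{T}(\mathcal{C})$ is the smallest torsion class containing $\mathcal{C}$; recall here that a subcategory closed under factors and extensions automatically yields a torsion pair via $\mathcal{F}:=\mathcal{T}(\mathcal{C})^\perp$, so the existence of the functorial sequence is built in.

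Finally, the torsion-free case follows by the same structural argument with all arrows reversed: I would show $\mathcal{F}(\mathcal{C})=\Filt(\Sub\,\mathcal{C})$ contains $\mathcal{C}$, is closed under extensions by the same filtration-splicing, is closed under submodules by intersecting a filtration of $N\in\mathcal{F}(\mathcal{C})$ with a submodule $L\subseteq N$ and using that $\Sub\,\mathcal{C}$ is closed under submodules, and is contained in every torsion-free class containing $\mathcal{C}$. The only subtle point is the dual of the factor-closure step, where one must verify that intersecting the filtration $0=N_0\subseteq\cdots\subseteq N_d=N$ with a submodule $L$ yields subquotients $(L\cap N_i)/(L\cap N_{i-1})$ that embed into $N_i/N_{i-1}\in\Sub\,\mathcal{C}$; this is an elementary diagram chase. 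With both halves established, the lemma follows.
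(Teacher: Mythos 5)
Your proof is correct. The paper itself does not prove this lemma but simply cites it as well known (pointing to \cite[Lemma 3.1]{MS}), so there is no paper proof to compare against. Your argument is the standard one: $\mathcal{C}\subseteq\Fac\mathcal{C}\subseteq\Filt(\Fac\mathcal{C})$, closure under extensions by splicing filtrations, closure under factor modules by pushing a filtration forward along a surjection and using that $\Fac\mathcal{C}$ is itself closed under quotients (allowing for possibly degenerate steps where $\pi(M_i)=\pi(M_{i-1})$, which is harmless), minimality because any torsion class containing $\mathcal{C}$ contains $\Fac\mathcal{C}$ and is extension-closed, and the observation that for a finite-dimensional algebra a subcategory closed under quotients and extensions is automatically a torsion class (one uses finite length to obtain the torsion radical $t(M)$). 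The torsion-free half is indeed dual, and your check that $(L\cap N_i)/(L\cap N_{i-1})$ embeds into $N_i/N_{i-1}$ is exactly the point that needs verifying there. Nothing is missing.
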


From the axioms of torsion theory, it is easy to see that for each torsion pair $(\mathcal{T},\mathcal{F})$ in $\modu A$, every simple $A$-module belongs to either $\mathcal{T}$ or $\mathcal{F}$. That is, each torsion pair divides the set $\{S_1, \cdots, S_n\}$ of all simple modules in $\modu A$ into two disjoint subsets.
As recalled in Section \ref{Sec: Summary of Main Results}, $(\mathcal{T},\mathcal{F})$ is called a \emph{splitting} torsion pair if for any $X \in \ind A$ we have $X \in \mathcal{T}$ or $X \in \mathcal{F}$.  
In general, an algebra $A$ may admit many (possibly infinitely many) non-trivial splitting torsion pairs. The following proposition lists some of the well-known characterizations of splitting torsion pairs.

\begin{proposition}[{\cite[VI. Prop. 1.7]{ASS}}]\label{Prop: splitting pairs}
For a torsion pair $(\mathcal{T},\mathcal{F})$ in $\modu A$, the following are equivalent:
\begin{enumerate}
    \item $(\mathcal{T},\mathcal{F})$ is splitting.
    \item The canonical short exact sequence $0 \rightarrow t(M) \rightarrow M \rightarrow M/t(M)\rightarrow 0$ splits, for each $M$ in $\modu A$.
    \item $\Ext^1_A(N,M)=0$, for all $M \in \mathcal{T}$ and $N\in \mathcal{F}$.
    \item $\tau^{-1}M \in \mathcal{T}$, for each $M\in \mathcal{T}$.
    \item $\tau N \in \mathcal{F}$, for each $N\in \mathcal{F}$.
\end{enumerate}
\end{proposition}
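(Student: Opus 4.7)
The plan is to close the cycle $(1) \Leftrightarrow (2) \Leftrightarrow (3)$ and then establish $(3) \Leftrightarrow (4)$ and $(3) \Leftrightarrow (5)$; only the last step uses Auslander--Reiten theory.

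First I would handle $(1) \Leftrightarrow (2)$ by Krull--Schmidt. Decomposing $M = \bigoplus_i M_i$ into indecomposables, $(1)$ assigns each $M_i$ to $\mathcal{T}$ or $\mathcal{F}$; since $\mathcal{T} \cap \mathcal{F} = 0$, the sum of those summands in $\mathcal{T}$ is exactly $t(M)$ and is therefore a direct summand of $M$, giving $(2)$. Conversely, applying $(2)$ to an indecomposable $M$ gives $M \cong t(M) \oplus M/t(M)$, and indecomposability forces one summand to vanish.

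Next, $(2) \Leftrightarrow (3)$ reflects the universal role of the canonical sequence. For $(3) \Rightarrow (2)$, the canonical extension class lies in $\Ext^1_A(M/t(M), t(M))$ with $M/t(M) \in \mathcal{F}$ and $t(M) \in \mathcal{T}$, so it vanishes by hypothesis. For $(2) \Rightarrow (3)$, given any extension $0 \to M \to E \to N \to 0$ with $M \in \mathcal{T}$ and $N \in \mathcal{F}$, I would verify $t(E) = M$: the inclusion $M \subseteq t(E)$ is automatic, while the quotient $t(E)/M$ embeds in $N \in \mathcal{F}$ and is simultaneously a quotient of the torsion module $t(E)$, hence zero. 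Thus the given sequence is the canonical sequence of $E$ and splits by $(2)$, proving $\Ext^1_A(N, M) = 0$.

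Finally, for $(3) \Leftrightarrow (4)$ and $(3) \Leftrightarrow (5)$, I would invoke the Auslander--Reiten duality formulas $D\,\Ext^1_A(N, M) \cong \overline{\Hom}_A(M, \tau N)$ and $D\,\Ext^1_A(N, M) \cong \underline{\Hom}_A(\tau^{-1}M, N)$, where the bars indicate morphisms modulo maps factoring through injectives and projectives respectively. The direction $(5) \Rightarrow (3)$ is immediate: if $\tau N \in \mathcal{F} = \mathcal{T}^\perp$ then $\Hom_A(M, \tau N) = 0$ for every $M \in \mathcal{T}$, so the corresponding stable group vanishes. The reverse $(3) \Rightarrow (5)$ requires more care, since AR duality only detects morphisms modulo those factoring through an injective; one upgrades this to genuine vanishing of $\Hom_A(M, \tau N)$ by factoring any such map through a minimal injective copresentation of $\tau N$ and exploiting that $\mathcal{F}$ is closed under submodules. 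The pair $(3) \Leftrightarrow (4)$ is handled symmetrically with the $\tau^{-1}$-formula and the closure of $\mathcal{T}$ under quotients. The main obstacle throughout is precisely this last bookkeeping: AR duality identifies $\Ext^1$ with $\Hom$ only modulo morphisms through projectives or injectives, while the torsion-theoretic conclusions ask for genuine vanishing of $\Hom$; everything else is formal from the torsion-pair axioms and Krull--Schmidt.
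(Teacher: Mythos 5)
Since the paper only cites \cite[VI.~Prop.~1.7]{ASS} for this proposition and does not reproduce a proof, your attempt is compared against the standard textbook argument.

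Your treatment of $(1)\Leftrightarrow(2)$ via Krull--Schmidt and of $(2)\Leftrightarrow(3)$ via the universal role of the canonical sequence (identifying $t(E)=M$ using that $\mathcal{T}\cap\mathcal{F}=0$, closure of $\mathcal{T}$ under quotients and $\mathcal{F}$ under submodules) is correct. So are the easy implications $(5)\Rightarrow(3)$ and $(4)\Rightarrow(3)$ from the AR duality formulas: if $\tau N\in\mathcal{F}=\mathcal{T}^\perp$ then $\Hom_A(M,\tau N)=0$ for $M\in\mathcal{T}$, hence $\Ext^1_A(N,M)\cong D\overline{\Hom}_A(M,\tau N)=0$.

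The gap is in $(3)\Rightarrow(5)$ (dually $(3)\Rightarrow(4)$). You correctly flag that $\overline{\Hom}_A(M,\tau N)=0$ does not a priori give $\Hom_A(M,\tau N)=0$, but the mechanism you propose --- factoring through a minimal injective copresentation of $\tau N$ and invoking closure of $\mathcal{F}$ under submodules --- does not obviously close this. Knowing that a map $M\to\tau N$ factors through an injective says nothing yet about its image, and ``$\mathcal{F}$ closed under submodules'' is not the right lever here (one uses it only to reduce to $M=t(\tau N)$ and $f$ a monomorphism). What actually finishes the argument is different. Reduce to $N$ indecomposable non-projective, so $\tau N$ is indecomposable and \emph{non-injective}. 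If $t(\tau N)\ne 0$, then $\Ext^1_A(N,t(\tau N))=0$ by $(3)$, so AR duality makes the inclusion $\iota:t(\tau N)\hookrightarrow\tau N$ factor as $g\circ h$ through an injective $I$ with $h$ mono. Extend $h$ along $\iota$ by injectivity of $I$ to $\tilde h:\tau N\to I$; then $g\tilde h\in\End_A(\tau N)$ restricts to the identity on $t(\tau N)$, hence is not nilpotent, hence (local endomorphism ring of the indecomposable $\tau N$) is an isomorphism, so $\tilde h$ splits and $\tau N$ is a summand of $I$ --- contradicting non-injectivity. Alternatively and more economically, push the AR sequence $0\to\tau N\to E\to N\to 0$ out along the non-(split mono) $\tau N\twoheadrightarrow\tau N/t(\tau N)$: the almost-split property forces the pushout to split, while the long exact sequence of $\Hom_A(N,-)$ applied to $0\to t(\tau N)\to\tau N\to\tau N/t(\tau N)\to 0$ together with $\Ext^1_A(N,t(\tau N))=0$ forces it to remain non-split; hence $t(\tau N)=0$. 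Either route is what needs to be inserted where your sketch currently hand-waves.
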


\subsection{Lattice of torsion classes}
For an algebra $A$, by $\tors A$ and $\torf A$, we respectively denote the set of all torsion classes and torsion-free classes in $\modu A$. These two sets actually form complete lattices with respect to the inclusion. More precisely, for a family of torsion classes $\{\mathcal{T}_i\}_{i \in I}$ in $ \tors A$ (respectively, $\{\mathcal{F}_i\}_{i \in I}$ in $\torf A$), the meet is given by $\bigwedge_{i\in I}\mathcal{T}_i = \bigcap_{i \in I}\mathcal{T}_i$ (respectively, $\bigwedge_{i\in I}\mathcal{F}_i = \bigcap_{i \in I}\mathcal{F}_i$). Moreover, the join $\bigvee_{i\in I}\mathcal{T}_i$ is defined as the intersection of all torsion classes $\mathcal{T}\in \tors A$ that contain $\bigcup_{i\in I}\mathcal{T}_i$. One analogously defines $\bigvee_{i\in I}\mathcal{F}_i$.
Furthermore, there is an anti-isomorphism of lattices between $\tors A$ and $\torf A$. More precisely, $\mathcal{T}\in \tors A$ is sent to $\mathcal{T}^{\bot} \in \torf A$, and in the opposite direction $\mathcal{F}$ is sent to $^{\bot}\mathcal{F}$. 

It is known that $\tors A$, and thus $\torf A$, is always a semidistributive lattice (\cite[Theorem 4.5]{GM}), and they are weakly atomic (\cite[Theorem 3.1]{DI+}).
Moreover, as proved in \cite{BCZ} and \cite{DI+}, there is a brick labeling of $\Hasse(\tors A)$, that is, each arrow in the Hasse quiver of $\tors A$ is uniquely labeled by an element of $\brick A$. However, we remark that a single element of $\brick A$ may appear as the label of more than one arrow in $\Hasse(\tors A)$. 
To briefly recall the idea of this labeling, we adopt the notation of \cite{DI+} and, for $\mathcal{U} \subseteq \mathcal{T}$ in $\tors A$, define 
$$\brick[\mathcal{U},\mathcal{T}]:=\{X\in \brick A \,|\, X \in \mathcal{T} \cap \mathcal{U}^{\bot}\}.$$
Then, there is an arrow $\mathcal{T} \rightarrow \mathcal{U}$ in $\Hasse(\tors A)$ if and only if $\brick[\mathcal{U},\mathcal{T}]$ has a unique element. We call this unique brick the \emph{label} of the arrow $\mathcal{T} \rightarrow \mathcal{U}$. 

We have the following important theorem. For the proof and more details, we refer to \cite[Section 3.2]{DI+}. 

\begin{theorem}[{\cite[Theorem 3.4]{DI+}}]\label{Thm: bijection between Join-irr and brick-labels}
Let $A$ be an algebra and let $\mathcal{U} \subseteq \mathcal{T}$ in $\tors A$.
\begin{enumerate}
    \item There is a bijection $\psi: \JI^c([\mathcal{U},\mathcal{T}])\rightarrow \brick[\mathcal{U},\mathcal{T}]$. In particular, $\psi$ sends $\mathcal{T}'$ to the brick that labels the unique arrow of $\Hasse[\mathcal{U},\mathcal{T}]$ that starts at $\mathcal{T}'$.
    \item There is a bijection $\phi: \MI^c([\mathcal{U},\mathcal{T}])\rightarrow \brick[\mathcal{U},\mathcal{T}]$. In particular, $\phi$ sends $\mathcal{U}'$ to the brick that labels the unique arrow of $\Hasse[\mathcal{U},\mathcal{T}]$ that ends at $\mathcal{U}'$.
\end{enumerate}
Consequently, the three sets $\brick A$, $\JI^c(\tors A)$, and $\MI^c(\tors A)$ are in bijection.
\end{theorem}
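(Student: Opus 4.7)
The plan is to use the brick labeling of $\Hasse(\tors A)$ together with the complete semidistributivity of $\tors A$ to construct $\psi$ (and its dual $\phi$) explicitly, and to verify that a natural ``smallest torsion class containing $B$'' construction provides the inverse. I focus on part (1); part (2) follows by the dual argument via the anti-isomorphism $\mathcal{T}\mapsto \mathcal{T}^{\perp}\colon \tors A \to \torf A$, and the final bijections involving $\brick A$ reduce to the case $\mathcal{U}=0$, $\mathcal{T}=\modu A$ of either part.

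First I construct $\psi$. A preliminary observation is that any cover relation $\mathcal{T}'_* \lessdot \mathcal{T}'$ inside the interval $[\mathcal{U},\mathcal{T}]$ is automatically a cover in $\tors A$, because any intermediate element $\mathcal{V}$ with $\mathcal{T}'_* \subsetneq \mathcal{V} \subsetneq \mathcal{T}'$ automatically lies in $[\mathcal{U},\mathcal{T}]$. Thus for each $\mathcal{T}' \in \JI^c([\mathcal{U},\mathcal{T}])$ with unique predecessor $\mathcal{T}'_*$, the global brick labeling attaches a unique brick $\psi(\mathcal{T}')$, which lies in $\brick[\mathcal{T}'_*,\mathcal{T}'] \subseteq \brick[\mathcal{U},\mathcal{T}]$. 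For the inverse direction, given $B\in \brick[\mathcal{U},\mathcal{T}]$ I set
\[
\mathcal{T}_B \;:=\; \bigwedge \bigl\{\mathcal{V} \in [\mathcal{U},\mathcal{T}] \mid B \in \mathcal{V}\bigr\}.
\]
This set contains $\mathcal{T}$ and is nonempty, and $B\in \mathcal{T}_B$ because meets in $\tors A$ are intersections. I then show $\mathcal{T}_B \in \JI^c([\mathcal{U},\mathcal{T}])$: if $\mathcal{T}_B = \bigvee_i \mathcal{V}_i$ with each $\mathcal{V}_i \subsetneq \mathcal{T}_B$, then by minimality no $\mathcal{V}_i$ contains $B$; the complete semidistributivity of $\tors A$ then implies that the collection of torsion classes not containing $B$ is closed under arbitrary joins, forcing $B\notin \mathcal{T}_B$, a contradiction.

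The main obstacle is verifying $\psi(\mathcal{T}_B)=B$. Let $(\mathcal{T}_B)_*$ be the unique element covered by $\mathcal{T}_B$ in $[\mathcal{U},\mathcal{T}]$; the label of this cover is some $B' \in \brick[(\mathcal{T}_B)_*,\mathcal{T}_B]$, and the subtle point is that knowing $B\notin (\mathcal{T}_B)_*$ (immediate from the minimality of $\mathcal{T}_B$) does not a priori promote to $B\in (\mathcal{T}_B)_*^{\perp}$. To handle this I use the smallest torsion class $T(B) = \Filt(\Fac B)$, which lies in $\mathcal{T}_B$: by the minimality of $\mathcal{T}_B$ one must have $T(B) \vee (\mathcal{T}_B)_* = \mathcal{T}_B$, and combined with the uniqueness of the brick label of a cover in $\tors A$ together with the standard fact that $B$ labels the cover $T(B)_* \lessdot T(B)$ (coming from $B\in T(B)\cap T(B)_*^{\perp}$), a transfer-of-labels argument forces $B'=B$. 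The reverse composition $\psi^{-1}\circ \psi = \mathrm{id}$ then follows from uniqueness of brick labels on Hasse arrows. Part (2) proceeds by the dual construction $\mathcal{U}_B := \bigvee\{\mathcal{V}\in [\mathcal{U},\mathcal{T}] \mid B\notin \mathcal{V}\}$, and the concluding bijections among $\brick A$, $\JI^c(\tors A)$, and $\MI^c(\tors A)$ come by specialization.
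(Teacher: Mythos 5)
The paper imports this result verbatim from \cite[Theorem 3.4]{DI+} and does not prove it, so there is no in-paper argument to compare against. Evaluating your sketch on its own terms: the objects you construct (the label $\psi(\mathcal{T}')$ of the unique lower cover, and the candidate inverse $\mathcal{T}_B=\bigwedge\{\mathcal{V}\in[\mathcal{U},\mathcal{T}]\mid B\in\mathcal{V}\}$) are the right ones, and your observation that covers inside $[\mathcal{U},\mathcal{T}]$ remain covers in $\tors A$ is correct. However, the step establishing $\mathcal{T}_B\in\JI^c([\mathcal{U},\mathcal{T}])$ rests on a false claim: you assert that complete semidistributivity of $\tors A$ implies the set of torsion classes not containing the brick $B$ is closed under arbitrary joins, i.e.\ that the smallest torsion class containing $B$ is completely join-\emph{prime}. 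Semidistributivity does not give this. Concretely, for $A=k(1\to 2)$ and the projective-injective brick $B=P_1$, both $\Fac S_1$ and $\Fac S_2$ are torsion classes omitting $P_1$, yet $\Fac S_1\vee\Fac S_2=\modu A\ni P_1$.

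The weaker statement you actually need---that every torsion class $\mathcal{V}\subsetneq\mathcal{T}(B)=\Filt(\Fac B)$ satisfies $\mathcal{V}\subseteq{}^{\perp}B$, so the join of all proper sub-torsion-classes is $\mathcal{T}(B)\cap{}^{\perp}B\subsetneq\mathcal{T}(B)$---is true, but requires a module-theoretic argument using that $B$ is a brick rather than a lattice-theoretic one: if $\Hom_A(\mathcal{V},B)\ne 0$, then $t_\mathcal{V}(B)\ne 0$; since $t_\mathcal{V}(B)\in\mathcal{V}\subseteq\Filt(\Fac B)$, it contains a nonzero submodule lying in $\Fac B$, giving a nonzero composite $B\to t_\mathcal{V}(B)\hookrightarrow B$, which is an isomorphism because $B$ is a brick, forcing $B\in\mathcal{V}$ and hence $\mathcal{V}=\mathcal{T}(B)$. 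You would still need to check that this argument adapts to a general interval $[\mathcal{U},\mathcal{T}]$, where $\mathcal{T}_B=\mathcal{T}(B)\vee\mathcal{U}$ and $\mathcal{V}$ need not sit inside $\Filt(\Fac B)$. Beyond that, the ``transfer-of-labels argument'' for $\psi(\mathcal{T}_B)=B$ and the injectivity via ``uniqueness of brick labels'' are asserted rather than carried out; the latter is delicate precisely because a single brick can label many distinct arrows of $\Hasse(\tors A)$.
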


The following property is well-known and immediately follows from the definition of the labels of arrows in $\Hasse(\tors A)$.

\begin{proposition}\label{Prop: labels distinct in a path}
If $\mathcal{T}_l \to \mathcal{T}_{l-1} \to \cdots \to \mathcal{T}_0$ is a path in $\Hasse(\tors A)$ and $X_i$ is the label of $\mathcal{T}_i \to \mathcal{T}_{i-1}$ for each $i \in \{1,\ldots,l\}$, then we have $\Hom_A(X_i,X_j)=0$ for any $i<j$.    
\end{proposition}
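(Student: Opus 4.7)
The plan is to unpack the labeling convention and then exploit the monotonicity of the $\perp$ operator along the decreasing chain of torsion classes underlying the path.

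First I would record the elementary observation that an arrow $\mathcal{T}_i \to \mathcal{T}_{i-1}$ in $\Hasse(\tors A)$ encodes a cover relation $\mathcal{T}_{i-1} \lessdot \mathcal{T}_i$, so walking along the whole path yields the strict chain of inclusions
\[
\mathcal{T}_0 \subsetneq \mathcal{T}_1 \subsetneq \cdots \subsetneq \mathcal{T}_l.
\]
By the definition of the brick labeling recalled just before the statement, the label $X_i$ of $\mathcal{T}_i \to \mathcal{T}_{i-1}$ is the unique element of $\brick[\mathcal{T}_{i-1},\mathcal{T}_i]$, hence it satisfies simultaneously $X_i \in \mathcal{T}_i$ and $X_i \in \mathcal{T}_{i-1}^{\perp}$.

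Next, fix indices $i<j$. Since $i \leq j-1$, the chain of inclusions above gives $\mathcal{T}_i \subseteq \mathcal{T}_{j-1}$, and therefore
\[
\mathcal{T}_{j-1}^{\perp} \subseteq \mathcal{T}_i^{\perp}.
\]
Because $X_j \in \mathcal{T}_{j-1}^{\perp}$ by the labeling description, we conclude $X_j \in \mathcal{T}_i^{\perp}$, i.e.\ $\Hom_A(M, X_j)=0$ for every $M \in \mathcal{T}_i$. Specialising to $M = X_i \in \mathcal{T}_i$ yields $\Hom_A(X_i, X_j)=0$, which is the desired conclusion.

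The argument is essentially a one-line consequence of the defining properties of the labels, so there is no real obstacle: the only subtle point is to verify the direction of the arrow convention (source covers target) and to apply $\perp$ on the correct side so that the chain inclusion reverses properly — once that bookkeeping is done, the $\Hom$-vanishing is automatic.
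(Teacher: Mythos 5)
Your proof is correct and is essentially the same one-line argument as the paper's: the paper notes directly that $X_i \in \mathcal{T}_i \subseteq \mathcal{T}_{j-1}$ while $X_j \in \mathcal{T}_{j-1}^{\perp}$, which immediately gives $\Hom_A(X_i,X_j)=0$. Your detour through $\mathcal{T}_{j-1}^{\perp} \subseteq \mathcal{T}_i^{\perp}$ is harmless but unnecessary, since one can simply test $X_j \in \mathcal{T}_{j-1}^{\perp}$ against $X_i \in \mathcal{T}_{j-1}$.
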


\begin{proof}
Since $i<j$, we have $X_i \in \mathcal{T}_i \subseteq \mathcal{T}_{j-1}$ and $X_j \in \mathcal{T}_{j-1}^\perp$.
\end{proof}

\subsection{Chain of bricks}\label{Subsection: Chain of bricks}
As before, $\brick A$ denotes the set of all isomorphism classes of bricks in $\modu A$. 
For a totally ordered set $(I,\leq)$, a \emph{chain of bricks} of $A$ is an injective map $\Psi: I \rightarrow \brick A$ such that if $i < j$ in $I$, then $\Hom_A(\Psi(i),\Psi(j))=0$. 
We identify two chains $\Psi_1:I_1\to\brick A$ and $\Psi_2:I_2\to\brick A$ if there exists an isomorphism $\rho:I_1\to I_2$ of posets satisfying $\Psi_1=\Psi_2\circ\rho$.
Let $\chbrick A$ denote the set of all chains of bricks of $A$ (after identification). We consider a partial order on $\chbrick A$, as follows: we write $\Psi_1\le\Psi_2$ if there exists an order preserving map $\rho:I_1\to I_2$ satisfying $\Psi_1=\Psi_2\circ\rho$.

To every chain of bricks of $A$, one can associate a unique chain of torsion classes. Before we describe this correspondence, recall that a subset $J$ of an ordered set $I$ is called an \emph{ideal} if $J$ is down-closed, that is, for each $l\in I$, if $l \leq j$, for some $j\in J$, then $l \in J$. By $\mathfrak{J}(I)$ we denote the set of all ideals of $I$ and view it as a poset with respect to the inclusion of ideals.

Let $\Psi: I \rightarrow \brick A$ be a chain of bricks of $A$. For $J \in \mathfrak{J}(I)$, let $T_J\in\tors A$ be the smallest torsion class in $\modu A$ that contains each brick $\Psi(j)$, for $j\in J$. Moreover, if $J \subsetneq J'$ are two ideals in $I$, for any $j \in J'\setminus J$ the brick $\Psi(j)$ belongs to $\mathcal{T}_{J'} \cap \mathcal{T}_{J}^{\perp}$.
As explained by Demonet, in \cite[Appendix]{KD}, this correspondence gives a strictly increasing map of posets $\mathfrak{J}(I)\rightarrow \tors A$, which sends $J$ to $\mathcal{T}_J$.
Thus, any (equivalence class of) chain of bricks of $A$, say $\Psi$, induces a subposet of $\tors A$, which we denote by $\tors_{\Psi} A$.

\medskip

The following result gives a beautiful and direct link between the chains of bricks and chains of torsion classes. Recall that $\chbrick A$ is a poset consisting of all (equivalence classes of) chains of bricks of $A$, with respect to the order described above.
Similarly, $\chtors A$ denotes the poset of all chains of torsion classes in $\tors A$, with the natural order given via refinement of chains.

\begin{theorem}[Demonet {\cite[Appendix]{KD}}]\label{Thm: Demonet's correspondence}
With the above notation, the map
$$f: \chbrick A \rightarrow \chtors A$$
that sends $\Psi$ to $\tors_{\Psi} A$ is an injective map of posets. In particular, $f$ sends the maximal elements of $\chbrick A$ to the maximal elements of $\chtors A$. 
\end{theorem}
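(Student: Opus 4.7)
The plan is to verify four properties in turn: (i) $f$ is well-defined (the image is a chain in $\tors A$), (ii) $f$ is order-preserving, (iii) $f$ is injective, and (iv) $f$ sends maximal elements to maximal elements.

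For (i), the key observation is that when $I$ is totally ordered, so is $\mathfrak{J}(I)$ under inclusion: if $J \not\subseteq J'$ in $\mathfrak{J}(I)$, picking any $j \in J \setminus J'$ forces every $j' \in J'$ to satisfy $j' < j$ (else $j \in J'$ by downward closure), so $J' \subsetneq J$. Combining this with the strictly increasing map $\mathfrak{J}(I) \to \tors A$ recalled just before the theorem, we conclude that $\tors_\Psi A$ is indeed a chain in $\tors A$, hence an element of $\chtors A$.

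For (iii) and (iv), the brick labeling of $\Hasse(\tors A)$ from Theorem \ref{Thm: bijection between Join-irr and brick-labels} is the central tool. Every cover relation in $\mathfrak{J}(I)$ has the form $J_{<j} \lessdot J_{\le j}$ for a single element $j \in I$, and one would show that the corresponding relation $T_{J_{<j}} \lessdot T_{J_{\le j}}$ is a cover in $\tors A$ labeled by $\Psi(j)$: indeed, $\Psi(j) \in T_{J_{\le j}}$ by construction, and $\Psi(j) \in T_{J_{<j}}^\perp$ because $\Hom_A(\Psi(j'), \Psi(j)) = 0$ for all $j' < j$ by the chain-of-bricks hypothesis, together with the description of $T_{J_{<j}}$ via filtrations of factor modules of the $\Psi(j')$. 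Reading off these labels along consecutive Hasse arrows in $\tors_\Psi A$ thus recovers both the set of bricks appearing in $\Psi$ and their ordering, giving injectivity. For maximality, if $\Psi$ is maximal in $\chbrick A$ but $\tors_\Psi A$ admits a proper refinement in $\chtors A$, then the newly inserted cover would have a brick label $B$ (by Theorem \ref{Thm: bijection between Join-irr and brick-labels}) which combines with $\Psi$ to form a strictly larger chain of bricks (the Hom-vanishing with the existing $\Psi(j)$ follows from Proposition \ref{Prop: labels distinct in a path}), contradicting maximality; the converse direction is symmetric.

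For (ii), given $\Psi_1 \le \Psi_2$ witnessed by an order-preserving $\rho: I_1 \to I_2$, one induces a map $\hat\rho: \mathfrak{J}(I_1) \to \mathfrak{J}(I_2)$ by $J_1 \mapsto \rho(J_1)^{\downarrow}$ (downward closure in $I_2$), and the plan is to show this realizes $\tors_{\Psi_1}A$ as a subchain of $\tors_{\Psi_2}A$. The main obstacle I anticipate is exactly this step: one must argue that the torsion classes generated at ``skipped'' elements of $I_2$ do not create incompatibilities with the chain built from $\Psi_1$, and this requires careful use of the filtration characterization of $T_{J}$ together with the chain-of-bricks hypothesis on $\Psi_2$. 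Once this compatibility is in place, the four steps combine to give the statement.
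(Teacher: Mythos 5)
The paper does not actually prove Theorem~\ref{Thm: Demonet's correspondence}; it is imported from Demonet's appendix to \cite{KD} and used as a black box. So there is no in-paper proof to compare against, and I am evaluating your plan on its own terms.

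The central assertion in your steps (iii) and (iv) --- that the relation $T_{J_{<j}} \subsetneq T_{J_{\le j}}$ ``is a cover in $\tors A$ labeled by $\Psi(j)$'' --- is false in general, and this is a genuine gap rather than a routine verification. What you correctly establish is $\Psi(j) \in T_{J_{\le j}} \cap T_{J_{<j}}^\perp$, hence strictness of the inclusion, but a brick in $T_{J_{<j}}^\perp$ need not be a minimal extending module of $T_{J_{<j}}$ in the sense of \cite{BCZ}, so this inclusion need not be a Hasse arrow. Concretely, take $A=kQ$ with $Q$ the linear quiver $1\to 2\to 3$ and $\Psi$ the singleton chain with $\Psi(1)=P_1={\begin{smallmatrix}1\\2\\3\end{smallmatrix}}$, a brick. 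Then $T_{J_{<1}}=0$ and $T_{J_{\le 1}}=\mathcal{T}(P_1)=\add\bigl(P_1\oplus{\begin{smallmatrix}1\\2\end{smallmatrix}}\oplus S_1\bigr)$, yet $\add(S_1)$ lies strictly between them. Since the consecutive steps of $\tors_\Psi A$ need not be Hasse arrows, one cannot ``read off'' brick labels via Proposition~\ref{Prop: labels distinct in a path}, and the injectivity conclusion does not follow from your plan; this is precisely the nontrivial content that the proof in \cite{KD} has to supply. Your maximality argument is closer to being repairable: after inserting a new $\mathcal{T}$ between consecutive $T_J \subsetneq T_{J'}$, Theorem~\ref{Thm: bijection between Join-irr and brick-labels} yields \emph{some} brick $B\in\brick[T_J,\mathcal{T}]$ (not a unique cover label), and the required Hom-vanishing for inserting $B$ into $\Psi$ already follows from $B\in T_J^\perp$, $B\in T_{J'}$, and the defining properties of $T_J$ and $T_{J'}$, without invoking covers. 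Finally, the obstacle you flag in step (ii) is also real: $\rho(J_1)$ need not be an ideal of $I_2$, and then $T^{(1)}_{J_1}$ can fail to coincide with $T^{(2)}_{\rho(J_1)^{\downarrow}}$ (e.g.\ on $kA_2$ with $\Psi_1=(S_2)$ embedded into $\Psi_2=(S_1,S_2)$), so this step also requires an argument that is not in your outline.
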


We note that the map $f$ in the above theorem is almost never invertible. In fact, $f$ is bijective if and only if $A$ is local. We remark that starting from a chain of bricks $\{B_i\}_{i\in I}$ in $\chbrick A$, we often have that the cardinality of the corresponding chain of torsion classes in $\chtors A$ induced by $f$ is no smaller (and often larger) than the cardinality of $I$. Moreover, one immediately concludes that the length of each chain in $\tors A$ is bounded by $|{\brick A}|$. Also, note that $A$ is brick-infinite if and only if $\tors A$ contains an infinite chain. For more details on the chain of torsion classes and their interactions with bricks, see \cite{BCZ}, \cite{DIJ}, \cite{DI+}, and {\cite[Appendix]{KD}}.

\resumetoc

\section{Brick-splitting torsion pairs}\label{Subsection: Brick-splitting pairs}

As a modern brick-analogue of the classical splitting phenomenon, we say that a torsion pair $(\mathcal{T},\mathcal{F})$ in $\modu A$ is \emph{brick-splitting} if, for each $M \in \brick A$, we have either $M \in \mathcal{T}$ or $M \in \mathcal{F}$. That being the case, $\mathcal{T}$ is called a brick-splitting torsion class and $\mathcal{F}$ is called a brick-splitting torsion-free class. 
If $(\mathcal{T},\mathcal{F})$ is splitting torsion pair in $\modu A$, then evidently each brick in $\modu A$ belongs to either $\mathcal{T}$ or $\mathcal{F}$. However, as shown in the following, the converse is not true in general. 

\subsection{Characterizations}
In this part, we give several characterizations of brick-splitting torsion pairs. 
We begin with the following proposition, which characterizes brick-splitting torsion pairs in terms of the intervals that each $\mathcal{T}\in \tors A$ specifies. This particularly implies equivalence of the first two assertions of Theorem \ref{Thm: brick-splitting torsion class characterization}.
For each $X$ in $\modu A$, recall that $\mathcal{T}(X)$ denotes the smallest torsion class $\mathcal{T} \in \tors A$ with $X \in \mathcal{T}$. Dually $\mathcal{F}(X)$ is defined.

\begin{proposition}\label{Prop: brick-splitting and intervals}
For an algebra $A$, and each torsion class $\mathcal{T}$ in $\modu A$, the following are equivalent:
\begin{enumerate}
    \item $(\mathcal{T}, \mathcal{T}^\perp)$ is a brick-splitting torsion pair;
    \item Each $X\in \brick A$ appears as an arrow in $\Hasse [0,\mathcal{T}]$ or $\Hasse[\mathcal{T}, \modu A]$.
\end{enumerate}
\end{proposition}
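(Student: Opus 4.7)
The plan is to identify, for each of the two intervals $[0,\mathcal{T}]$ and $[\mathcal{T},\modu A]$ in $\tors A$, the precise set of bricks appearing as labels on arrows in the corresponding Hasse sub-diagram, and then observe that condition (2) translates immediately into the brick-splitting condition (1).

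First I would invoke Theorem \ref{Thm: bijection between Join-irr and brick-labels}, applied to an arbitrary interval $[\mathcal{U},\mathcal{T}']$ in $\tors A$. The bijection $\psi:\JI^c([\mathcal{U},\mathcal{T}'])\to\brick[\mathcal{U},\mathcal{T}']$ sends a completely join-irreducible element to the label of its unique down-going arrow in $\Hasse[\mathcal{U},\mathcal{T}']$. Conversely, for any arrow $\mathcal{V}\to\mathcal{W}$ in $\Hasse[\mathcal{U},\mathcal{T}']$ the label lies in $\mathcal{V}\cap\mathcal{W}^\perp\subseteq\mathcal{T}'\cap\mathcal{U}^\perp$, so the set of bricks appearing as labels of arrows in $\Hasse[\mathcal{U},\mathcal{T}']$ coincides with
\[
\brick[\mathcal{U},\mathcal{T}']=\{X\in\brick A\mid X\in\mathcal{T}'\cap\mathcal{U}^\perp\}.
\]

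Next I would specialize this identification to the two intervals of interest. Taking $(\mathcal{U},\mathcal{T}')=(0,\mathcal{T})$ and using that $0^\perp=\modu A$, the set of bricks labeling arrows in $\Hasse[0,\mathcal{T}]$ is $\brick A\cap\mathcal{T}$. Taking $(\mathcal{U},\mathcal{T}')=(\mathcal{T},\modu A)$, the set of bricks labeling arrows in $\Hasse[\mathcal{T},\modu A]$ is $\brick A\cap\mathcal{T}^\perp$.

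Finally, condition (2) says that every $X\in\brick A$ appears as a label in $\Hasse[0,\mathcal{T}]$ or in $\Hasse[\mathcal{T},\modu A]$, which by the preceding identification is equivalent to
\[
\brick A\;=\;(\brick A\cap\mathcal{T})\cup(\brick A\cap\mathcal{T}^\perp),
\]
that is, $\brick A\subseteq\mathcal{T}\cup\mathcal{T}^\perp$. This is exactly the defining condition (1) that $(\mathcal{T},\mathcal{T}^\perp)$ is a brick-splitting torsion pair. There is no real obstacle here: Theorem \ref{Thm: bijection between Join-irr and brick-labels} does all the work, and the argument is simply careful bookkeeping around the two intervals and the definition of $\brick[\mathcal{U},\mathcal{T}']$. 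The only subtlety is making sure the bijection is used in a way that accounts for labels of \emph{all} arrows in the interval (not just those starting at join-irreducibles), which is immediate from the fact that any arrow $\mathcal{V}\to\mathcal{W}$ has its top $\mathcal{V}$ sitting above some completely join-irreducible covering $\mathcal{W}$ with the same label, and the converse inclusion from the definition of the label.
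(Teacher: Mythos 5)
Your proof is correct, and it is essentially the same argument as the paper's, just packaged by invoking Theorem~\ref{Thm: bijection between Join-irr and brick-labels} as a black box: for the $(1)\Rightarrow(2)$ direction the paper writes out the explicit cover relations $\mathcal{T}(X)\cap{}^\perp X\subset\mathcal{T}(X)$ in $[0,\mathcal{T}]$ (and its dual for $X\in\mathcal{T}^\perp$), which is precisely the construction underlying the bijection $\psi$ you quote, while the $(2)\Rightarrow(1)$ direction in both versions is the trivial containment of labels. One small remark: the sentence in your last paragraph about ``any arrow $\mathcal{V}\to\mathcal{W}$ has its top $\mathcal{V}$ sitting above some completely join-irreducible covering $\mathcal{W}$'' is garbled and not needed; the inclusion $\{\text{labels in }\Hasse[\mathcal{U},\mathcal{T}']\}\subseteq\brick[\mathcal{U},\mathcal{T}']$ follows directly from the definition of the label (as you already observe earlier), and the reverse inclusion is exactly the surjectivity of $\psi$.
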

\begin{proof}
Assume that (1) holds. Consider $X \in \brick A$. Since $(\mathcal{T}, \mathcal{T}^\perp)$ is brick-splitting, we get $X \in \mathcal{T}$ or $X \in \mathcal{T}^\perp$. In the first case, $\mathcal{T}(X)$ is a torsion class in the interval $[0, \mathcal{T}]$, and we have $X$ is the labeling brick of the minimal inclusion $\mathcal{T}(X) \cap{}^\perp X \subset \mathcal{T}(X)$. In the second case, if $X \in \mathcal{T}^\perp$, then $X$ is the labeling brick of the minimal inclusion $\mathcal{F}(X) \cap X^\perp \subset \mathcal{F}(X)$ of torsion-free classes, all included in $\mathcal{T}^\perp$. This translates to $X$ being a brick-labeling of the corresponding inclusion $^\perp X \subset {^\perp(\mathcal{F}(X) \cap X^\perp})$ of torsion classes in  $[\mathcal{T},\modu A]$. 

Now, assume that $(2)$ holds. Let $X \in \brick A$. If $X$ is a labeling brick between two torsion classes in $[0, \mathcal{T}]$, then obviously $X \in \mathcal{T}$. Suppose that $X$ is the labeling brick between some two adjacent torsion classes $\mathcal{T}_1 \subset \mathcal{T}_2$ in the interval $[\mathcal{T}, \modu A]$. This implies that $X \in \mathcal{T}_1^\perp \subseteq \mathcal{T}^\perp$.
\end{proof}

\begin{remark}
For brick-finite algebras, the above proposition gives an easy criterion to verify whether a given torsion class admits a brick-splitting torsion pair. 
For instance, in Example \ref{Example: Non-trivial brick-splitting torsion pairs}, one can check that every $X\in\brick A$ appears on the left chain of the lattice $\tors A$ (see Figure \ref{Fig. for Example}). Consequently, every torsion class on this chain gives rise to a brick-splitting torsion pair. In contrast, the labeling bricks on the other maximal chain on the right side consists of only two bricks, whereas $\brick A$ has three elements. Thus, we can immediately conclude that $\mathcal{T}''=\Fac(\begin{smallmatrix} 2\\2\end{smallmatrix})$ is not a brick-splitting torsion class. In particular, ${\begin{smallmatrix} 1\\2\end{smallmatrix}} \in \brick A$ does not appear in $[0,\mathcal{T}'']$ nor $[\mathcal{T}'',\modu A]$.
\end{remark}

From the previous proposition and remark, we obtain the following corollary. As we shall see later, this result can be strengthened to give a characterization of a special family of algebras (see Theorem \ref{Thm: Brick-directed & maximal chain of brick-splitting pairs}).

\begin{corollary}
Let $A$ be a brick-finite algebra. If $\Hasse(\tors A)$ has a path of length $|{\brick A}|$, then every torsion class on that path is brick-splitting.
\end{corollary}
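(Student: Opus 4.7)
The plan is to use Proposition~\ref{Prop: brick-splitting and intervals} to reduce the claim to showing that every brick of $A$ appears as a label of some edge of the given path. The main input will be Proposition~\ref{Prop: labels distinct in a path}, together with the fact that the path has the maximum possible length $|\brick A|$.

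First, write the path as
\[
\modu A = \mathcal{T}_l \to \mathcal{T}_{l-1} \to \cdots \to \mathcal{T}_1 \to \mathcal{T}_0 = 0
\]
with $l = |\brick A|$, and let $X_i \in \brick A$ be the label of the Hasse arrow $\mathcal{T}_i \to \mathcal{T}_{i-1}$ for $1 \le i \le l$. (Note that the endpoints $0$ and $\modu A$ are forced, since the length bound $|\brick A|$ from the discussion after Theorem~\ref{Thm: Demonet's correspondence} together with weak atomicity of $\tors A$ guarantees that a path of this length must begin at $\modu A$ and end at $0$.)

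Next I would invoke Proposition~\ref{Prop: labels distinct in a path}, which gives $\Hom_A(X_i, X_j) = 0$ whenever $i < j$. In particular, the bricks $X_1, \ldots, X_l$ are pairwise non-isomorphic, so the labels of the $l = |\brick A|$ edges of the path are exactly the elements of $\brick A$. Hence every brick of $A$ appears as the label of some edge on the path.

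Finally, fix an arbitrary torsion class $\mathcal{T}_k$ lying on the path, and let $X \in \brick A$. By the previous step, $X = X_i$ for some $i$. If $i \le k$, then the edge $\mathcal{T}_i \to \mathcal{T}_{i-1}$ lies in the interval $[0,\mathcal{T}_k]$, and if $i > k$, it lies in $[\mathcal{T}_k, \modu A]$. Either way, $X$ appears as a label in $\Hasse[0,\mathcal{T}_k]$ or in $\Hasse[\mathcal{T}_k,\modu A]$, so the equivalence $(1)\Leftrightarrow(2)$ of Proposition~\ref{Prop: brick-splitting and intervals} yields that $(\mathcal{T}_k, \mathcal{T}_k^\perp)$ is brick-splitting. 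There is no real obstacle here: the argument is essentially a pigeonhole application once Propositions~\ref{Prop: brick-splitting and intervals} and~\ref{Prop: labels distinct in a path} are in hand.
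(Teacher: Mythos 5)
Your proof is correct and follows essentially the same route as the paper's: invoke Proposition \ref{Prop: labels distinct in a path} to conclude the $|\brick A|$ labels along the path are pairwise distinct, hence exhaust $\brick A$, and then apply the equivalence $(1)\Leftrightarrow(2)$ of Proposition \ref{Prop: brick-splitting and intervals}. The parenthetical observation that the endpoints of the path are forced to be $0$ and $\modu A$ is a nice sanity check but is not actually needed, since for any $\mathcal{T}_k$ on the path each edge $\mathcal{T}_i\to\mathcal{T}_{i-1}$ already lies in $\Hasse[0,\mathcal{T}_k]$ when $i\le k$ and in $\Hasse[\mathcal{T}_k,\modu A]$ when $i>k$.
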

\begin{proof}
Labeling bricks along a path in the Hasse quiver are all non-isomorphic. The condition therefore implies that all bricks appear as labels of this path. The statement follows from Proposition \ref{Prop: brick-splitting and intervals}.
\end{proof}

In the next proposition, we give a purely lattice-theoretical characterization of brick-splitting pairs. In particular, together with Proposition \ref{Prop: brick-splitting and intervals}, the following result completes the proof of Theorem \ref{Thm: brick-splitting torsion class characterization}.

\begin{proposition}\label{Prop: brick-splitting is left modular}
For an algebra $A$, and each torsion class $\mathcal{T}$ in $\modu A$, the following are equivalent:
\begin{enumerate}
    \item $(\mathcal{T}, \mathcal{T}^\perp)$ is a brick-splitting torsion pair;
    \item $\mathcal{T}$ is a left modular element of the lattice $\tors A$.
\end{enumerate}
\end{proposition}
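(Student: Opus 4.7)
The plan is to use the brick labelling of $\Hasse(\tors A)$ as a dictionary between the lattice-theoretic condition of left modularity and the module-theoretic condition defining brick-splittingness. Because $\tors A$ is weakly atomic by \cite[Theorem 3.1]{DI+}, \cite[Lemma 2.13]{TW} allows one to verify left modularity of $\mathcal{T}$ cover-by-cover: it suffices to check
$$(\mathcal{U}\vee\mathcal{T})\wedge\mathcal{U}'\leq\mathcal{U}\vee(\mathcal{T}\wedge\mathcal{U}')$$
for every cover relation $\mathcal{U}\lessdot\mathcal{U}'$. Both sides lie in the two-element interval $[\mathcal{U},\mathcal{U}']$, so the question reduces to whether $\mathcal{U}'\subseteq\mathcal{U}\vee\mathcal{T}$ and whether $\mathcal{T}\wedge\mathcal{U}'\subseteq\mathcal{U}$. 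Throughout I denote by $B$ the unique brick labelling $\mathcal{U}\lessdot\mathcal{U}'$, so that $B\in\mathcal{U}'\cap\mathcal{U}^\perp$.

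For $(1)\Rightarrow(2)$, the brick-splittingness of $(\mathcal{T},\mathcal{T}^\perp)$ supplies a dichotomy. If $B\in\mathcal{T}$, then $B\in\mathcal{T}\cap\mathcal{U}'$, and since $B$ is a nonzero brick in $\mathcal{U}^\perp$ it cannot lie in $\mathcal{U}$; the right-hand side $\mathcal{U}\vee(\mathcal{T}\cap\mathcal{U}')$ therefore strictly contains $\mathcal{U}$ and must equal $\mathcal{U}'$. If instead $B\in\mathcal{T}^\perp$, I will show $B\notin\mathcal{U}\vee\mathcal{T}=\Filt(\mathcal{U}\cup\mathcal{T})$: the first nonzero term $B_1$ of any filtration of $B$ with factors in $\mathcal{U}\cup\mathcal{T}$ is itself a submodule of $B$ lying in $\mathcal{U}$ or $\mathcal{T}$, and the nonzero inclusion $B_1\hookrightarrow B$ violates $B\in\mathcal{U}^\perp\cap\mathcal{T}^\perp$. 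Consequently the left-hand side collapses to $\mathcal{U}$, and the desired inequality holds.

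For $(2)\Rightarrow(1)$, I will start from a brick $B\notin\mathcal{T}$ and conclude $B\in\mathcal{T}^\perp$ by testing left modularity at a judicious cover. Theorem \ref{Thm: bijection between Join-irr and brick-labels} supplies the smallest torsion class $\mathcal{T}_B$ containing $B$, which is completely join-irreducible with unique covered element $\mathcal{T}_B^-$ and whose labelling brick is $B$. Since $B\notin\mathcal{T}$, complete join-irreducibility forces $\mathcal{T}\cap\mathcal{T}_B\subseteq\mathcal{T}_B^-$, so applying left modularity at $\mathcal{T}_B^-\lessdot\mathcal{T}_B$ yields $(\mathcal{T}_B^-\vee\mathcal{T})\cap\mathcal{T}_B=\mathcal{T}_B^-$, whence $B\notin\mathcal{T}_B^-\vee\mathcal{T}$. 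Supposing toward contradiction that $B\notin\mathcal{T}^\perp$, I would pick a nonzero $T\to B$ with $T\in\mathcal{T}$ and set $I:=\Image(T\to B)$, giving a nonzero $I\subsetneq B$ with $I\in\mathcal{T}$ and $B/I\in\mathcal{T}_B$. Composing any $B/I\to B$ with the surjection $B\twoheadrightarrow B/I$ produces a non-injective, hence zero, endomorphism of $B$ by the brick property; so $\Hom_A(B/I,B)=0$, and a second application of complete join-irreducibility to $\mathcal{T}(B/I)\subseteq\mathcal{T}_B$ (any filtration of $B$ by factors in $\Fac(B/I)$ would yield a nonzero $B/I\to B$) places $B/I$ in $\mathcal{T}_B^-$. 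The short exact sequence $0\to I\to B\to B/I\to 0$ then exhibits $B$ as an extension inside $\mathcal{T}_B^-\vee\mathcal{T}$, the desired contradiction.

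The main obstacle is the implication $(2)\Rightarrow(1)$: converting the purely order-theoretic hypothesis of left modularity into the categorical conclusion $B\in\mathcal{T}^\perp$ requires identifying $\mathcal{T}_B^-\lessdot\mathcal{T}_B$ as the appropriate cover to probe, and then chaining together two uses of complete join-irreducibility with the brick property of $B$ in order to manufacture the short exact sequence that places $B$ into $\mathcal{T}_B^-\vee\mathcal{T}$.
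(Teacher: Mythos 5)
Your proof is correct and follows essentially the same route as the paper: the reduction of left modularity to cover relations via weak atomicity and \cite[Lemma 2.13]{TW}, the case split on whether the labeling brick lies in $\mathcal{T}$ or $\mathcal{T}^\perp$ for $(1)\Rightarrow(2)$, and probing left modularity at the cover $\mathcal{T}_B^-\lessdot\mathcal{T}(B)$ for $(2)\Rightarrow(1)$. The only differences are cosmetic: you unwind a filtration to show $B\notin\mathcal{U}\vee\mathcal{T}$ where the paper instead observes $\mathcal{V}=\mathcal{U}\cap{}^\perp L$ and $\mathcal{T}\vee\mathcal{V}\subseteq{}^\perp L$, and you re-derive by hand that every proper quotient of $B$ lands in $\mathcal{T}_B^-$ where the paper simply invokes that $B$ is a minimal extending brick for $\mathcal{T}_B^-$.
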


\begin{proof}
Assume that $\mathcal{T}$ is left modular. Let $L \in \brick A$ be not in $\mathcal{T}$. We need to prove that $L \in \mathcal{T}^\perp$. Since $L \not \in \mathcal{T}$, the smallest torsion class $\mathcal{T}(L)$ containing $L$ is not contained in $\mathcal{T}$. Since $\mathcal{T}(L)$ is completely join-irreducible, there is a torsion class $\mathcal{Y}$ with $\mathcal{Y} \subsetneq \mathcal{T}(L)$ such that every torsion class that is properly contained in $\mathcal{T}(L)$ is contained in $\mathcal{Y}$. Using that $\mathcal{T}$ is a left modular element, we get
$$(\mathcal{Y} \vee \mathcal{T}) \cap \mathcal{T}(L) = \mathcal{Y} \vee (\mathcal{T} \cap \mathcal{T}(L)).$$
Since $\mathcal{T} \cap \mathcal{T}(L)$ is properly contained in $\mathcal{T}(L)$, we see that $ \mathcal{Y} \vee (\mathcal{T} \cap \mathcal{T}(L)) = \mathcal{Y}$. Thus $(\mathcal{Y} \vee \mathcal{T}) \cap \mathcal{T}(L)=\mathcal{Y} \neq \mathcal{T}(L)$ holds. This means that $\mathcal{T}(L)$ is not contained in $\mathcal{Y} \vee \mathcal{T}$, so that $L \not \in \mathcal{Y} \vee \mathcal{T}.$ Now, we consider the canonical sequence
$$0 \to T \to L \to F \to 0$$
for $L$ where $T \in \mathcal{T}$ and $F \in \mathcal{T}^\perp$. Since $L$ is a minimal extending brick for $\mathcal{Y}$, we know that every proper quotient of $L$ has to be in $\mathcal{Y}$. So if $T$ is non-zero, then $F \in \mathcal{Y}$, making $L \in \mathcal{Y} \vee \mathcal{T}$, a contradiction. Therefore, $T=0$ and $L \in \mathcal{T}^\perp$.

For the other direction, assume that $\mathcal{T}$ is brick-splitting. Observe that it follows from \cite[Theorem 1.3]{DI+} that $\tors A$ is weakly atomic. By the proof of \cite[Lemma 2.8]{TW} and weakly atomic property of $\tors A$, in order to prove that $\mathcal{T}$ is left modular, it suffices to prove that for two torsion classes $\mathcal{V} \subsetneq \mathcal{U}$ forming a cover relation, we have 
$$(*):\quad (\mathcal{U}\cap \mathcal{T}) \vee \mathcal{V} = \mathcal{U} \cap (\mathcal{T} \vee \mathcal{V}).$$
Observe that the containment from left to right is clear. Now, let $L$ be the labeling brick corresponding to the cover relation $\mathcal{V} \subsetneq \mathcal{U}$. Then we have $\mathcal{T}(L)\vee\mathcal{V}=\mathcal{U}$ and ${}^\perp L\cap\mathcal{U}=\mathcal{V}$. Since $\mathcal{T}$ is brick-splitting, either $L \in \mathcal{T}$ or $L \in \mathcal{T}^\perp$. If $L \in \mathcal{T}$, we see that
$$(\mathcal{U} \cap \mathcal{T})\vee\mathcal{V} \supseteq \mathcal{T}(L) \vee \mathcal{V}= \mathcal{U} \supseteq \mathcal{U} \cap (\mathcal{T} \vee \mathcal{V}).$$
Assume now that $L \in \mathcal{T}^\perp$. 
Since both $\mathcal{T}$ and $\mathcal{V}$ are contained in $^\perp L$, we have
$$\mathcal{U} \cap (\mathcal{T} \vee \mathcal{V}) \subseteq \mathcal{U} \cap {^\perp L} = \mathcal{V}\subseteq(\mathcal{U}\cap \mathcal{T}) \vee \mathcal{V}.$$
Therefore, $(*)$ also holds in this case.
\end{proof}

\subsection{Brick quivers and brick-splitting torsion classes}\label{Subsection: Brick-quiver}

To any algebra, one can explicitly associate a quiver that captures the interactions between the bricks and hence the brick-directedness of the algebra under consideration. In particular, for a connected algebra $A$, by $Q^b(A)$ we denote the \emph{brick-quiver} of $A$, constructed as follows: The vertices of $Q^b(A)$ are in bijection with elements of $\brick A$, and for each pair of distinct (hence non-isomorphic) modules $X$ and $Y$ in $\brick A$, we put an arrow from $X$ to $Y$ if $\Hom_A(X,Y)\neq 0$. For an explicit example, see Example \ref{Example: windwheel alg of rank 2}. One motivation for studying the brick-quiver is the following.

\begin{proposition}
There exists a bijection between the following sets.
\begin{enumerate}
\item Brick-splitting torsion classes of $A$;
\item Successor-closed subsets of $Q^b(A)$.
\end{enumerate}
\end{proposition}

\begin{proof}
Given a brick-splitting torsion class $\mathcal{T}$, we associate to it the set $\mathcal{T} \cap \brick A$, which is regarded as a set of vertices of $Q^b(A)$. If $B \to B'$ is an arrow in $Q^b(A)$ with $B \in \mathcal{T}$, then $\Hom_A(B, B') \ne 0$ so that $B' \not \in \mathcal{T}^\perp$. Since $\mathcal{T}$ is brick-splitting, this yields that $B' \in \mathcal{T}$. Hence, the set $\brick(A) \cap \mathcal{T}$ is successor-closed in $Q^b(A)$. Since torsion classes are determined by the bricks they contain, the assignment $\mathcal{T} \mapsto \mathcal{T} \cap \brick A$ is injective. 

Conversely, let $\mathcal{S}$ be a set of bricks that forms a successor-closed set in $Q^b(A)$. We let $\mathcal{T} = \mathcal{T}(\mathcal{S})$ be the smallest torsion class that contains $S$. We observe that for a brick $B$, we have $\Hom_A(\mathcal{S},B)=0$ if and only if $B \in \mathcal{T}^\perp$, if and only if $B \not \in \mathcal{S}$, where the latter equivalence follows from the fact that $\mathcal{S}$ is successor-closed in $Q^b(A)$. It follows that $\mathcal{T} \cap \brick A = \mathcal{S}$. Finally, we check that the constructed $\mathcal{T}$ is brick-splitting. Let $B \in \brick A$. If $B \in \mathcal{S}$, then $B \in \mathcal{T}$. Otherwise, it follows from what we have shown that $B \in \mathcal{T}^\perp$. Hence, $\mathcal{T}$ is brick-splitting. 
\end{proof}

The following proposition lists some properties of the brick quiver that follows from the above construction.

\begin{prop}
Let $A$ be a connected algebra. Then, the brick quiver $Q^b(A)$ satisfies the following properties:
    \begin{enumerate}
    \item $Q^b(A)$ is connected and has no loops.
    \item $Q^b(A)$ has finitely many arrows if and only if it has finitely many vertices. This is the case if and only if $A$ is brick-finite.
    \item $Q^b(A)$ is infinite if and only if it has at least two vertices of infinite degree.
    \item $Q^b(A)$ has a cycle if and only if there is a brick-cycle in $\modu A$.
    \item Every acyclic path in $Q^b(A)$ gives a chain of bricks of $A$, provided no subpath of it can be completed to a cycle.
\end{enumerate}
\end{prop}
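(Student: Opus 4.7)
The plan is to verify each item with a small toolkit: the Ext-quiver of a connected algebra is connected, every non-simple brick has a simple quotient and a simple submodule, a nonzero morphism between non-isomorphic bricks is automatically non-invertible, and $\End_A(B)$ is a division ring for every brick $B$. For (1), loops are ruled out by the definition, which compares distinct bricks. For connectedness, I first observe that every non-simple brick $B$ has some simple summand $T$ of $\topm B$, yielding an arrow $B\to T$ and placing $B$ in the component of a simple. It then suffices to connect the simples to each other in $Q^b(A)$: for each arrow $S_i\to S_j$ in $\Omega_A$ the non-split extension $0\to S_j\to E\to S_i\to 0$ is a length-$2$ brick (any endomorphism preserves the socle $S_j$ and induces a map on the quotient $S_i$, and non-splitness forces the two scalars to agree), and the arrows $S_j\to E$ and $E\to S_i$ in $Q^b(A)$ link $S_i$ and $S_j$. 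Since $A$ is connected, $\Omega_A$ is connected, and this propagates through $Q^b(A)$. Item (2) is combinatorial: the construction places at most one arrow per ordered pair of distinct vertices, so finitely many vertices imply finitely many arrows, and the converse uses (1), because an infinite connected graph has infinitely many edges. The last equivalence is the definition of brick-finite.

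The nontrivial content of (3) lies in the forward direction, and its heart is the following key lemma: if $B$ is a non-simple brick, then no simple $P$ occurs as a summand of both $\topm B$ and $\soc B$. Indeed, otherwise the composite
\[
B \twoheadrightarrow \topm B \twoheadrightarrow P \hookrightarrow \soc B \hookrightarrow B
\]
is a nonzero endomorphism of $B$ whose kernel contains $\rad B\ne 0$, hence is non-invertible, contradicting the fact that $\End_A(B)$ is a division ring. Given this, assume $A$ is brick-infinite. Since $\simp A$ is finite, pigeonhole over the simple summands of tops and socles of the infinitely many non-simple bricks produces a simple $T$ appearing in $\topm B$ for infinitely many $B$, so that $T$ has infinite in-degree, and a simple $S$ appearing in $\soc B$ for infinitely many $B$, so that $S$ has infinite out-degree. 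If $S\ne T$, we have two distinct vertices of infinite degree. Otherwise $S=T=:P$, and the key lemma forces every brick $B$ with $P\in\topm B$ to have $\soc B$ supported on simples different from $P$; a second pigeonhole over the finitely many non-$P$ simples then yields $Q\ne P$ with infinite out-degree, completing the argument. The reverse direction is immediate, since an infinite-degree vertex forces $Q^b(A)$ to contain infinitely many arrows.

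Items (4) and (5) reduce to definition-chasing. For (4), given a brick-cycle in $\modu A$, its consecutive bricks are pairwise non-isomorphic (a nonzero endomorphism of a brick is invertible, so a nonzero non-invertible morphism cannot exist between isomorphic bricks), and the morphisms $f_i$ provide the corresponding arrows in $Q^b(A)$; conversely, given a cycle in $Q^b(A)$, any nonzero morphisms along its arrows are automatically non-invertible between the non-isomorphic bricks and hence form a brick-cycle in $\modu A$. For (5), suppose $X_0\to X_1\to\cdots\to X_m$ is an acyclic path in $Q^b(A)$ no subpath of which can be completed to a cycle. Any nonzero morphism $X_k\to X_l$ with $k>l$ would provide an arrow $X_k\to X_l$ in $Q^b(A)$ that, together with the subpath $X_l\to\cdots\to X_k$, would form a cycle, contradicting the hypothesis. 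Thus $\Hom_A(X_k,X_l)=0$ for $k>l$, and the reverse reindexing $\Psi(i):=X_{m-i}$ satisfies the chain-of-bricks condition. The main obstacle in the plan is establishing the key lemma behind (3); everything else is assembled from the canonical arrows produced by simple tops, simple socles, and Schur's lemma.
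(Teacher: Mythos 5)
Your proof is correct and follows essentially the same approach as the paper's: arrows from non-simple bricks to their simple top/socle summands plus length-$2$ bricks for (1), counting for (2), the key fact that a simple cannot appear in both the top and socle of a non-simple brick followed by a pigeonhole argument for (3), and definition-chasing for (4) and (5). The only differences are cosmetic --- you isolate the top/socle exclusion as an explicit lemma (the paper invokes it in passing inside the proof of (3)), your pigeonhole in (3) starts from both tops and socles simultaneously rather than propagating from socles alone, and in (2) you argue via the connectedness of an infinite graph rather than counting the arrows from non-simple bricks to simples --- none of which constitutes a genuinely different route.
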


\begin{proof} 
Recall that $S_1, \ldots, S_n$ denote a complete list of (isomorphism classes of) simple modules in $\modu A$. In particular, each $S_i$ appears as a vertex of $Q^b(A)$. 
\begin{enumerate}
    \item First, observe that the top and socle of each $A$-module are given as direct sums of simple modules. In particular, for $X\in \brick A$, and each simple module $S$ appearing as a summand of $\topm(X)$, in the brick quiver $Q^b(A)$ there is an arrow $X \to S$ (dually, for every simple summand $S'$ of $\soc(X)$, there is an arrow $S' \to X$). Thus, each non-simple $X$ in $\brick A$ is connected to at least one simple module in $Q^b(A)$. 
    Next, we note that for each arrow $\alpha \in \Omega_1$ which is not a loop, say $\alpha: i \to j$, we have a brick $M$ of length $2$ given by the middle term of a non-split short exact sequence
    $$0 \to S_j \to M \to S_i \to 0.$$ Then, in $Q^b(A)$, we obviously have the arrows $S_j \to M$, and $M \to S_i$. Hence, using bricks of length $2$, we conclude that $Q^b(A)$ is connected. 
    The fact that $Q^b(A)$ has no loops immediately follows from the definition.

    \item As explained in part (1), for each non-simple $X$ in $\brick A$, in $Q^b(A)$ there is at least one arrow from $X$ to one of the simple modules $S_1, \ldots, S_n$. Thus, $Q^b(A)$ has infinitely many arrows if and only if $\brick A$ contains infinitely many (non-simple) bricks. 

    \item If $Q^b(A)$ is an infinite quiver, then obviously $\brick A$ contains infinitely many (non-simple) bricks, because $Q^b(A)$ has finitely many arrows starting at $i$ and ending at $j$ for any pair $(i,j)$ of vertices.  Consequently, for some $1\leq i \leq n$, the simple module $S_i$ appears as a summand of the socle of infinitely many bricks in $\brick A$, say $\{X_t\}_{t\in \mathbb{Z}_{>0}}$. Hence, in $Q^b(A)$, there are infinitely many arrows outgoing from $S_i$, namely, $S_i \to X_t$, for each ${t\in \mathbb{Z}_{>0}}$. Since every $X_t$ is a brick, then $S_i$ cannot be a summand of $\topm(X_t)$, for any ${t\in \mathbb{Z}_{>0}}$. Hence, there exists $1\leq j \leq n$, with $j\neq i$, such that the simple module $S_j$ appears as a summand of the top of infinitely many bricks in $\{X_t\}_{t\in \mathbb{Z}_{>0}}$. Thus, in $Q^b(A)$, there are infinitely many arrows incoming to $S_j$. From this, we conclude that $S_i$ and $S_j$ in  $Q^b(A)$ are of infinite degree. The converse is trivial.

    \item This directly follows from the definition.

    \item We only prove the statements for finite paths in $Q^b(A)$, because a similar argument also works for the infinite paths. 
    Let $X_0 \to X_1 \to \cdots \to X_m$ be an acyclic path in $Q^b(A)$, and assume that no subpath of it can be completed to a cycle. Namely, for each $0 \leq i < j \leq m$, we have $X_i \not\simeq X_j$, and furthermore, there is no path in $Q^b(A)$ from $X_j$ to $X_i$. In particular, $\Hom_A(X_j,X_i)=0$. To show that $X_0 \to \cdots \to X_m$  gives rise to a chain of bricks, we consider the totally ordered set $(I,\leq)$, where $I:=\{0,1, \cdots,m\}$ is the index set of the bricks $X_i$ in the acyclic path $X_0 \to \cdots \to X_m$, and the order in $(I,\leq)$ is defined to be $s < t$ if $\Hom_A(X_t,X_s)=0$. 
    Then, it is immediate that the injective map $\Psi: I \rightarrow \brick A$, given by $\Psi(i)=X_{m-i}$, specifies a chain of bricks, particularly because, for all $s < t$ in $I$, we have $\Hom_A(\Psi(s),\Psi(t))=0$.
\end{enumerate}
\end{proof}

In the following remark, we highlight a property of the brick-quiver of algebras. 

\begin{remark}
As observed in \cite{MP2}, from some earlier results of \cite{As}, it follows that for any brick-finite algebra $A$ of rank $n$, there is no set of pairwise Hom-orthogonal modules of size $n+1$. In particular, for such an algebra, the maximal size of an independent set in $Q^b(A)$ is $n$, that is, the independence number of $Q^b(A)$ is $n$ and is attained exactly for the set of vertices in $Q^b(A)$ corresponding to the simple $A$-modules (see \cite[Theorem A]{MP2}). In light of the Semibrick conjecture (see \cite[Conjecture 2.2]{MP2}), it is expected that the converse of the latter statement is true, that is, the independence number of $Q^b(A)$ is $n$ if and only if $A$ is brick-finite. This is already known to be the case for various families of algebras, but it remains open in full generality (for more details, see \cite{MP2, MP4} and references therein).
\end{remark}

\subsection{Basic properties}
We begin with the simple observation that if, for an algebra $A$, each indecomposable $A$-module is a brick, then the notions of splitting and brick-splitting torsion pairs coincide. Those algebras that satisfy the equality $\ind A=\brick A$ are shown to be exactly the family of locally representation-directed algebras, which are known to properly generalize the family of representation-directed algebras (i.e., those algebras for which there is no cycle in $\modu A$). For different characterizations of locally representation-directed algebras, see \cite{Dr} and \cite{MP1}. In particular, if $A$ is locally representation-directed, then every brick-splitting torsion pair in $\modu A$ is in fact splitting. 
We will frequently use the following lemma in some arguments. It is probably well known to experts, but we include a proof for completeness.

\begin{lemma} \label{Lemma: brick-splitting implies no Ext between bricks}
    Let $(\mathcal{T}, \mathcal{F})$ be brick-splitting. 
    If $X\in\mathcal{T}$ and $Y\in\mathcal{F}$ are bricks with $\Hom_A(Y,X)=0$, then $\Ext^1_A(Y,X)=0$.
\end{lemma}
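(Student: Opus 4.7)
The plan is to prove the contrapositive: if $\Ext_A^1(Y,X)\neq 0$, produce a brick that lies in neither $\mathcal{T}$ nor $\mathcal{F}$, violating the brick-splitting hypothesis. So fix a non-split short exact sequence
\[
0\longrightarrow X\longrightarrow E\longrightarrow Y\longrightarrow 0,
\]
and the goal will be to show that $E$ is itself a brick whose torsion part is exactly $X$.

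First I would identify the torsion part of $E$. Since $X\in\mathcal{T}$, clearly $X\subseteq t(E)$; conversely $t(E)/X$ is a quotient of $t(E)\in\mathcal{T}$ and simultaneously a submodule of $Y\in\mathcal{F}$, so it is zero. Thus $t(E)=X$. Decomposing $E$ into indecomposables and noting that $X$ and $Y$ are each indecomposable, a standard matching argument (the indecomposable summands of $E$ either contribute to $t(E)=X$ or lie inside $\mathcal{F}$, and the quotient collects to $Y$) forces either $E\cong X\oplus Y$ (split, excluded) or $E$ is indecomposable. So henceforth $E$ is indecomposable.

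Next, I would establish the two Hom-vanishings $\Hom_A(Y,E)=0$ and $\Hom_A(E,X)=0$. Applying $\Hom_A(Y,-)$ to the sequence gives, using $\Hom_A(Y,X)=0$, an injection $\Hom_A(Y,E)\hookrightarrow\End_A(Y)=D_Y$ whose cokernel maps into $\Ext_A^1(Y,X)$ by $d\mapsto d^*[E]$. Every nonzero element of the division ring $D_Y$ is a unit, hence $d^*[E]$ is isomorphic to $[E]\neq 0$, so the connecting map is injective, forcing $\Hom_A(Y,E)=0$. The vanishing $\Hom_A(E,X)=0$ follows dually from $\Hom_A(-,X)$ and $\End_A(X)=D_X$.

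Now to show $E$ is a brick: given $\phi\in\End_A(E)$, one has $\phi(X)\subseteq t(E)=X$, so $\phi$ restricts to $\phi|_X\in D_X$ and induces $\bar\phi\in D_Y$. If $\phi|_X=0$, then $\phi$ factors through $E\twoheadrightarrow Y\to E$, contradicting $\Hom_A(Y,E)=0$ unless $\phi=0$; symmetrically $\bar\phi=0$ forces $\phi=0$ via $\Hom_A(E,X)=0$. So for any nonzero $\phi$, both $\phi|_X$ and $\bar\phi$ are units in their respective division rings, hence isomorphisms; the Five Lemma applied to the endomorphism diagram of the short exact sequence then shows $\phi$ is an automorphism of $E$. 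Therefore $\End_A(E)$ is a division ring and $E\in\brick A$. Finally, brick-splitting forces $E\in\mathcal{T}$ or $E\in\mathcal{F}$: the former puts the quotient $Y$ in $\mathcal{T}$, the latter puts the submodule $X$ in $\mathcal{F}$, each contradicting $X,Y\neq 0$ and $\mathcal{T}\cap\mathcal{F}=0$. The main technical hurdle is the brick-ness of $E$ over a general ground field $k$, where one cannot just compare scalars but must genuinely invoke the division-ring structure of $D_X$ and $D_Y$ together with the Five Lemma.
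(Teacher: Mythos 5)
Your proof is correct, but it takes a noticeably more elementary route than the paper's. The paper observes that $\Hom_A(X,Y)=0$ holds automatically from $X\in\mathcal{T}$, $Y\in\mathcal{F}$, so $\{X,Y\}$ is a semibrick, invokes Ringel's theorem that $\mathcal{A}=\Filt(X\oplus Y)$ is a wide (hence abelian length) subcategory of $\modu A$ with simple objects $X,Y$, and then immediately concludes that the non-split middle term $E$ is an indecomposable object of length $2$ with non-isomorphic composition factors in $\mathcal{A}$, hence a brick; the contradiction follows as in your last step. Your argument avoids the wide-subcategory machinery entirely: you instead identify $t(E)=X$ using the torsion pair, deduce $\Hom_A(Y,E)=0$ and $\Hom_A(E,X)=0$ from the long exact sequences (using that the connecting maps out of the division rings $\End_A(Y)$, $\End_A(X)$ are injective since they hit $[E]\neq 0$), and then run a by-hand division-ring-plus-Five-Lemma analysis of $\End_A(E)$. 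Both proofs hinge on the same key object (the middle term of a non-split extension) and conclude identically; what the paper's approach buys is brevity and conceptual clarity via the semibrick--wide-subcategory correspondence, while yours buys self-containedness, replacing that structural theorem with a direct computation. One small stylistic note: once you show that every non-zero endomorphism of $E$ is an isomorphism, indecomposability of $E$ is automatic, so your earlier Krull--Schmidt ``matching'' paragraph (which is a bit loosely worded, since $t(E_i)$ and $E_1/X$ need not themselves be indecomposable, though the argument is easily repaired) could be dropped, with the split case excluded simply because the extension was chosen non-split.
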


\begin{proof}
Since $\{X,Y\}$ is a semibrick (i.e., a set of pairwise Hom-orthogonal bricks), we obtain a wide subcategory $\mathcal{A}=\Filt(X\oplus Y)$.
Assume that there is a non-split short exact sequence
    $$0 \to X \stackrel{u}{\to} E \stackrel{v}{\to} Y \to 0.$$
Then $E$ is an indecomposable object of length $2$ in the abelian category $\mathcal{A}$ whose composition factors are non-isomorphic. Thus any non-zero endomorphism of $E$ has to be an isomorphism, and hence $E$ is a brick. 
By the brick-splitting property, either $E \in \mathcal{T}$ or $E \in \mathcal{F}$. In the first case, we get $v=0$, a contradiction. In the second case, we get $u=0$, a contradiction.
\end{proof}

In general, a brick-splitting torsion pair is not necessarily splitting (see Example \ref{Example: Non-trivial brick-splitting torsion pairs}). For hereditary algebras, the following proposition shows that when the torsion pair is functorially finite, these two notions of splitting coincide. Before stating the proposition, we need the following lemma, which can be found in \cite[Chapter IV, Lemma 1.5]{Ha}.

\begin{lemma} \label{Lemma: Happel}
    Let $A$ be a hereditary $k$-algebra. Let $X,Y$ be indecomposable rigid $A$-modules such that $\Ext^1_A(X,Y)=0$. Then at least one of $\Hom_A(Y,X)$ and $\Ext^1_A(Y,X)$ is zero.
\end{lemma}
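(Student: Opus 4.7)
The plan is to argue by contradiction: assume both $\Hom_A(Y,X)\ne 0$ and $\Ext^1_A(Y,X)\ne 0$ and construct a rigid $A$-module carrying a non-zero square-zero endomorphism, which over a hereditary algebra ultimately cannot coexist with rigidity because indecomposable rigid modules over a hereditary algebra are bricks.

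The first step is to unpack the hypotheses via the Auslander--Reiten formula for hereditary Artin algebras, namely $\Ext^1_A(M,N) \cong D\Hom_A(N,\tau M)$ when $M$ is non-projective. Applied to our setting, rigidity of $Y$ gives $\Hom_A(Y,\tau Y) = 0$, while $\Ext^1_A(Y,X) \ne 0$ forces $Y$ to be non-projective and $\Hom_A(X,\tau Y) \ne 0$. Hence, for every non-zero $f:Y\to X$ and every non-zero $g:X\to \tau Y$, the composition $g\circ f$ lies in $\Hom_A(Y,\tau Y) = 0$, so $\Image f \subseteq \ker g$, a proper non-zero submodule of the brick $X$.

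The second step is to construct a universal extension. Set $m := \dim_k \Ext^1_A(Y,X)$, fix a $k$-basis $\eta_1,\ldots,\eta_m$, and bundle them into the non-split short exact sequence
\[
\eta:\ 0 \to X^m \xrightarrow{i} E \xrightarrow{\pi} Y \to 0
\]
classified by $(\eta_1,\ldots,\eta_m) \in \Ext^1_A(Y,X^m)$. A routine diagram chase with the long exact sequences obtained by applying $\Hom_A(-,X)$, $\Hom_A(-,Y)$, $\Hom_A(X,-)$, and $\Hom_A(E,-)$ to $\eta$, combined with the hypotheses $\Ext^1_A(X,X) = \Ext^1_A(Y,Y) = \Ext^1_A(X,Y) = 0$ and the fact that the $\eta_i$ span $\Ext^1_A(Y,X)$ over $k$ and hence over $\End_A(X)$, gives $\Ext^1_A(X\oplus E, X\oplus E) = 0$. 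Writing $\iota_1: X\hookrightarrow X^m$ for the inclusion into the first factor, the endomorphism $\phi := i\circ \iota_1 \circ f \circ \pi \in \End_A(E)$ satisfies $\phi^2 = 0$ (since $\pi\circ i = 0$) and is non-zero because all four composing maps are non-zero with $\pi$ surjective and $\iota_1,i$ injective.

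The final step is to extract the contradiction. Since $E$ is rigid over the hereditary algebra $A$, Krull--Schmidt decomposes $E = \bigoplus_j E_j^{n_j}$ with each $E_j$ an indecomposable rigid, hence a brick. The endomorphism $\phi$ factors through $Y$, which is not a summand of $E$ (as $\pi$ does not split), so $\phi$ must induce a non-trivial cycle among the brick summands of $E$. The main obstacle is converting this cycle into an explicit obstruction: one approach is to push out along the canonical short exact sequences involving $\ker\phi$ and $\Image\phi$ to produce a non-zero element of $\Ext^1_A(E,E)$, directly contradicting the rigidity of $E$; an alternative is to apply Bongartz completion to $X\oplus E$, obtaining a basic tilting module of $n = \mathrm{rank}\, K_0(A)$ indecomposable summands that cannot contain $Y$ (lest $\Ext^1_A(Y,X) = 0$), and then derive a contradiction from the induced position of $Y$ with respect to this tilting. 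Either route requires careful bookkeeping of the matrix entries of $\phi$ across the Krull--Schmidt decomposition, and it is precisely at this point that the hereditary hypothesis plays its essential role, guaranteeing both the brick property of the $E_j$ and the vanishing of higher Exts needed for the push-out computation.
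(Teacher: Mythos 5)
The paper does not give its own proof of this lemma; it cites Happel's book directly, so your proposal has to stand on its own. Steps~1 and~2 are sound: the Auslander--Reiten observations are correct (though not used later), and the universal extension $0 \to X^m \to E \to Y \to 0$ does yield $\Ext^1_A(X \oplus E, X \oplus E) = 0$ by the standard long-exact-sequence bookkeeping you describe, using rigidity of $X$, $Y$, the hypothesis $\Ext^1_A(X,Y)=0$, surjectivity of the connecting map $\Hom_A(X^m,X) \to \Ext^1_A(Y,X)$, and $\Ext^2 = 0$.

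The genuine gap is in Step~3, and you have in effect flagged it yourself (``The main obstacle is converting this cycle into an explicit obstruction\ldots''). Producing a non-zero square-zero endomorphism $\phi$ on the rigid module $E$ is \emph{not} a contradiction: rigid modules over hereditary algebras routinely carry such endomorphisms. For instance, over $A = k(1 \to 2)$ the module $M = P_1 \oplus S_2$ is rigid, yet the composite $M \twoheadrightarrow S_2 \hookrightarrow \soc P_1 \hookrightarrow M$ is a non-zero endomorphism squaring to zero. So the first route you sketch --- push out along $\ker\phi$ and $\Image\phi$ to manufacture a non-zero class in $\Ext^1_A(E,E)$ --- cannot work from $\phi^2 = 0$ alone; it would have to exploit additional structure of $\phi$ (that it factors through $Y$, which is not a summand of $E$, and through a non-split copy of $X$ inside $E$), and you have not explained how. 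The intermediate claim ``$\phi$ must induce a non-trivial cycle among the brick summands of $E$'' is also unjustified: $\phi$ factors through $Y$ and $X$, neither of which is a direct summand of $E$, so there is no reason for $\phi$ to have a non-zero component cycling among the $E_j$. The second route (Bongartz completion of $X \oplus E$) is likewise only gestured at. As written, the argument stops short of a proof; to close it you would either need to make the push-out construction precise enough to see a non-split self-extension of $E$, or switch to an argument that does not pass through ``rigid module with nilpotent endomorphism'' as the intended contradiction (e.g.\ an Euler-form/exceptional-pair argument of the kind used in Happel's original source).
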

 
\begin{prop}
Let $A$ be a hereditary algebra and $(\mathcal{T},\mathcal{F})$ be a functorially finite torsion pair in $\modu A$. Then, $(\mathcal{T},\mathcal{F})$ is splitting if and only if it is brick-splitting.
\end{prop}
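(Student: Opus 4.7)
The forward implication is immediate because every brick is indecomposable, so I focus on the converse. Assuming $(\mathcal{T},\mathcal{F})$ is functorially finite and brick-splitting, the plan is to verify condition (3) of Proposition \ref{Prop: splitting pairs}: that $\Ext^1_A(N,M)=0$ for every $N\in\mathcal{F}$ and $M\in\mathcal{T}$, which by additivity I may assume to hold for indecomposable $M$ and $N$.

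The core idea is to supply, in the hereditary functorially finite setting, canonical ``brick-witnesses'' for $\mathcal{T}$ and $\mathcal{F}$ to which Lemma \ref{Lemma: brick-splitting implies no Ext between bricks} can be applied. By Smal{\o}'s characterization of functorially finite torsion classes, there is a basic Ext-projective module $T=T_1\oplus\cdots\oplus T_r\in\mathcal{T}$ with $\mathcal{T}=\Fac T$, and dually a basic Ext-injective module $F=F_1\oplus\cdots\oplus F_s\in\mathcal{F}$ such that every object of $\mathcal{F}$ embeds into some power of $F$. Each $T_i$ satisfies $\Ext^1_A(T_i,T_i)=0$ by Ext-projectivity of $T_i$ within $\mathcal{T}$ (and the fact that $T_i\in\mathcal{T}$), so it is a rigid indecomposable; dually for each $F_j$. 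Over a hereditary algebra a rigid indecomposable module is a brick (a classical consequence of the Happel--Ringel lemma, cf.\ \cite{Ha}), so each $T_i$ is a brick in $\mathcal{T}$ and each $F_j$ a brick in $\mathcal{F}$.

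The torsion-pair axioms give $\Hom_A(F_j,T_i)=0$, so Lemma \ref{Lemma: brick-splitting implies no Ext between bricks} will yield $\Ext^1_A(F_j,T_i)=0$ for all $i,j$. I then propagate this vanishing to arbitrary objects: picking a surjection $T^n\twoheadrightarrow M$ and an embedding $N\hookrightarrow F^m$, hereditariness ($\Ext^2_A=0$) forces $\Ext^1_A(N,M)$ to be a quotient of $\Ext^1_A(N,T^n)$, which is in turn a quotient of $\Ext^1_A(F^m,T^n)=0$, yielding the desired vanishing.

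The only subtle step is the identification of the canonical generator of $\mathcal{T}$ and the cogenerator of $\mathcal{F}$ as bricks; once this is in place, the brick-splitting hypothesis becomes usable through Lemma \ref{Lemma: brick-splitting implies no Ext between bricks}, and the rest is a formal manipulation with two long exact sequences enabled by $\gl.dim A\le 1$.
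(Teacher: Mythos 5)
Your identification of the key lemma (Lemma \ref{Lemma: brick-splitting implies no Ext between bricks}) and your final propagation step (the two long exact sequences exploiting $\Ext^2_A=0$) are both fine, and your observation that the summands of Smal\o's Ext-projective generator and Ext-injective cogenerator are bricks over a hereditary algebra is a pleasant shortcut compared to the paper's detour through wide subcategories. However, there is a genuine gap at the exact point where the brick-splitting hypothesis enters.

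You assert that ``the torsion-pair axioms give $\Hom_A(F_j,T_i)=0$.'' They do not: the torsion-pair axioms give $\Hom_A(T_i,F_j)=0$, i.e.\ $\Hom_A(\mathcal{T},\mathcal{F})=0$, which is the \emph{opposite} direction from what Lemma \ref{Lemma: brick-splitting implies no Ext between bricks} requires. And the needed vanishing can genuinely fail for your choice of brick-witnesses: take $A=kQ$ with $Q\colon 1\to 2$, $\mathcal{T}=\add(P_1\oplus S_1)$, $\mathcal{F}=\add S_2$. Then $T=P_1\oplus S_1$, $F=S_2$, and $\Hom_A(S_2,P_1)\neq 0$. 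Worse, the standard remedy — Happel's Lemma \ref{Lemma: Happel}, which the paper does invoke — requires $\Ext^1_A(T_i,F_j)=0$ as input, and this too fails for your Ext-projective generator: $\Ext^1_A(S_1,S_2)\neq 0$ in the same example. So the pair $(T_i,F_j)=(\text{Ext-projective summand},\text{Ext-injective summand})$ is simply the wrong pair to feed into the brick-splitting lemma. The paper sidesteps this by working instead with the semibricks $\mathcal{B}$ (left finite, $\mathcal{T}=\mathcal{T}(\mathcal{B})$) and $\mathcal{C}$ (right finite, $\mathcal{F}=\mathcal{F}(\mathcal{C})$) from Asai's theory; for $B\in\mathcal{B}$, $C\in\mathcal{C}$ one has $\Ext^1_A(B,C)=0$ because $\mathcal{B}\cup\mathcal{C}[1]$ is a 2-term simple-minded collection, and only then does Happel's lemma combine with Lemma \ref{Lemma: brick-splitting implies no Ext between bricks} to give $\Ext^1_A(C,B)=0$. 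Your subsequent propagation to all of $\mathcal{F}$ and $\mathcal{T}$ is essentially the same as the paper's closing step, so replacing the Smal\o\ modules by the Asai semibricks and inserting the Happel dichotomy would repair the argument.
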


\begin{proof}
We only need to prove one direction. Assume  $(\mathcal{T},\mathcal{F})$ is brick-splitting. 
Since $(\mathcal{T},\mathcal{F})$ is functorially finite, by \cite[Proposition 2.9]{As}, we take the left finite semibrick $\mathcal{B}$ such that $\mathcal{T}=\mathcal{T}(\mathcal{B})$, and the right finite semibrick $\mathcal{C}$ such that $\mathcal{F}=\mathcal{F}(\mathcal{C})$.

We show that each $X\in\mathcal{B}\cup\mathcal{C}$ is rigid. Let $\mathcal{S}:=\mathcal{B}$ if $X\in\mathcal{B}$, and $\mathcal{S}:=\mathcal{C}$ if $X\in\mathcal{C}$.
By \cite[Theorem 2.27]{As} and its dual, we have that $\Filt \mathcal{S}$ is equivalent to the module category of some finite dimensional algebra $\Gamma$. We can take an idempotent $e \in \Gamma$ such that $\Filt X \simeq \modu \Gamma/\langle e \rangle$, since $\Filt X$ is a Serre subcategory of $\Filt\mathcal{S}$. Denote by $Y$ the indecomposable projective object in $\Filt X$. Then, $\Ext^1_A(Y,X)=0$. 
Since  $Y\in\Filt X$, we have a monomorphism $\iota: X \to Y$. Because $A$ is hereditary, this gives rise to an epimorphism $0=\Ext^1_A(Y,X) \to \Ext^1_A(X,X)$, showing that $X$ is rigid.

Let us fix $B \in \mathcal{B}$ and $C \in \mathcal{C}$. We have $\Ext_A^1(B,C)=0$, because $\mathcal{B} \cup \mathcal{C}[1]$ is a 2-term simple-minded collection by \cite[Theorem 3.3]{As}. 
Then by Lemma \ref{Lemma: Happel}, we get $\Hom_A(C,B)=0$ or $\Ext_A^1(C,B)=0$. In the former case, since $B \in \mathcal{T}$ and $C \in \mathcal{F}$, we have $\Ext_A^1(C,B)=0$ by Lemma \ref{Lemma: brick-splitting implies no Ext between bricks}. Hence, we have $\Ext^1_A(C,B) = 0$ in both cases. This shows that $\Ext^1_A(\mathcal{C}, \mathcal{B})=0$. 
Then, the hereditary property yields that $\Ext^1_A(\mathcal{F}, \mathcal{T})=\Ext^1_A(\mathcal{F}(\mathcal{C}), \mathcal{T}(\mathcal{B}))=0$. 
Now, the desired result follows from Proposition \ref{Prop: splitting pairs}.
\end{proof}

\begin{prop}
    If $A$ is hereditary of tame type then a torsion pair in $\modu A$ is splitting if and only if it is brick-splitting.
\end{prop}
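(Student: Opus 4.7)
Since every splitting torsion pair is tautologically brick-splitting, the substantive direction is to start from a brick-splitting torsion pair $(\mathcal{T},\mathcal{F})$ in $\modu A$ and show that every indecomposable $A$-module belongs to $\mathcal{T}\cup\mathcal{F}$. The previous proposition handled this under a functorial finiteness assumption, and my plan is to remove that hypothesis by exploiting the tame hereditary structure. Classically, over a hereditary tame algebra, each indecomposable is either preprojective, preinjective, or regular; the preprojective and preinjective indecomposables are directing and hence bricks, so the brick-splitting hypothesis immediately places them in $\mathcal{T}\cup\mathcal{F}$. The entire argument therefore reduces to regular indecomposables, which sit in the stable tubes of $\Gamma_A$.

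Fix a tube $\mathcal{U}$ of rank $r$ with quasi-simples $E_1,\dots,E_r$, indexed cyclically so that $\tau E_{i+1}=E_i$. Each $E_i$ is a brick and distinct quasi-simples of a tube are Hom-orthogonal, so $\Hom_A(E_{i+1},E_i)=0$, while the Auslander--Reiten formula gives $\Ext^1_A(E_{i+1},E_i)\cong D\Hom_A(E_i,E_i)\neq 0$. The key propagation step is: if one had $E_i\in\mathcal{T}$ and $E_{i+1}\in\mathcal{F}$, then Lemma \ref{Lemma: brick-splitting implies no Ext between bricks} would force $\Ext^1_A(E_{i+1},E_i)=0$, contradicting the non-vanishing above. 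Since brick-splitting forces each $E_i$ into $\mathcal{T}$ or $\mathcal{F}$, iterating the propagation cyclically around the mouth of $\mathcal{U}$ shows that either all $E_i$ lie in $\mathcal{T}$ or all lie in $\mathcal{F}$.

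To conclude, I would invoke the standard fact that the category of regular modules supported on $\mathcal{U}$ is a uniserial length abelian category whose simple objects are precisely the $E_i$: in particular every indecomposable of $\mathcal{U}$ admits a filtration in $\modu A$ whose successive quotients are among $E_1,\dots,E_r$. Because $\mathcal{T}$ and $\mathcal{F}$ are extension-closed in $\modu A$, all indecomposables of $\mathcal{U}$ then lie on the same side as its quasi-simples. Running this argument over every tube yields $\ind A\subseteq\mathcal{T}\cup\mathcal{F}$, which is the desired splitting. The expected main obstacle is the cyclic propagation step at the mouth of each tube; once that is in place, the remainder is a routine use of extension-closure and, importantly, no functorial finiteness hypothesis is needed.
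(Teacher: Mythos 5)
Your proposal is correct and follows essentially the same route as the paper: reduce to the regular indecomposables lying in stable tubes, show via Lemma \ref{Lemma: brick-splitting implies no Ext between bricks} (applied to a non-split extension between Hom-orthogonal adjacent quasi-simples) that all quasi-simples of any given tube must lie on the same side, and then propagate up the tube by extension-closure. Your write-up is a touch more explicit than the paper's (spelling out that preprojective/preinjective indecomposables are bricks and invoking the AR formula for the non-vanishing of $\Ext^1_A(E_{i+1},E_i)$), but the key idea — the cyclic propagation at the mouth using the brick-splitting lemma — is identical.
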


\begin{proof}
If $A$ is a representation-finite hereditary algebra, it is well known that $\ind A=\brick A$. Hence, there is nothing to show. 
Thus, henceforth we assume $A$ is representation-infinite and let $(\mathcal{T}, \mathcal{F})$ be a brick-splitting torsion pair. 
Suppose $M$ is an indecomposable which is not a brick. Then $M$ lies in some stable tube in $\Gamma_A$. The quasi-simple modules of the tubes are all bricks. If one of those quasi-simples lies in $\mathcal{F}$ and another lies in $\mathcal{T}$, then there is a short exact sequence
    $$0 \to L \to E \to N \to 0$$
    with $L,N$ quasi-simples where $L \in \mathcal{T}$ and $N \in \mathcal{F}$. Lemma \ref{Lemma: brick-splitting implies no Ext between bricks}, together with the assumption that $L$ and $N$ are Hom-orthogonal, leads to the desired contradiction.  
    Hence, for that tube, either all quasi-simples are in $\mathcal{T}$ or they all belong to $\mathcal{F}$. This forces $M$ to be in $\mathcal{T}$ or in $\mathcal{F}$. Therefore, the pair $(\mathcal{T}, \mathcal{F})$ is splitting.
\end{proof}

The preceding paragraphs show that to find an algebra which admits a brick-splitting torsion pair which is not splitting, one must search outside the two families of tame hereditary algebras and locally representation-directed algebras. Such an example is given below. 
In fact, the following example shows that an algebra may have no non-trivial splitting torsion pairs but it can admit non-trivial brick-splitting torsion pairs. In particular, the notion of brick-splitting pairs is novel. Before we present the following example, let us recall that a module $X$ in $\modu A$ is said to be \emph{$\tau$-rigid} if $\Hom_A(X,\tau_A X)=0$. By $\textit{i}\taurigid A$ we denote the set of all (isomorphism classes) of indecomposable $\tau$-rigid modules in $\modu A$.

\begin{example}\label{Example: Non-trivial brick-splitting torsion pairs}
Let $Q$ be the following quiver
\begin{center}
    \begin{tikzpicture}
    \draw[->] (2.5,2.55) arc (0:340:0.45cm);
\node at (1.4,2.5) {$\alpha$};
        \node at (3.5,2.5) {$\circ$};
        \node at (3.4,2.3) {$2$};
   \draw [->] (2.55,2.5) --(3.45,2.5) ; 
\node at (3,2.75) {$\beta$};
    \node at (2.5, 2.5) {$\circ$}; 
    \node at (2.6, 2.3) {$1$}; 
    \draw[->] (3.52,2.47) arc (190:530:0.45cm);
\node at (4.6,2.5) {$\gamma$};
    \end{tikzpicture}
\end{center}
and define $A:=kQ/I$, where $I$ is generated by all paths of length $2$.
In particular, $A$ is a radical square zero string algebra. Thus, we can describe the indecomposable $A$-modules in terms of the strings in $(Q,I)$, and we have
$$\ind A=\{e_1, e_2, \alpha, \beta, \gamma, \beta \alpha^{-1}, \gamma^{-1}\beta, \gamma^{-1}\beta\alpha^{-1} \}.$$
If the indecomposable $A$-modules corresponding to the above strings are presented in terms of their radical filtration, we obtain
$$\ind A=\{{\begin{smallmatrix} 1\end{smallmatrix}}, {\begin{smallmatrix} 2\end{smallmatrix}}, {\begin{smallmatrix} 1\\1\end{smallmatrix}}, {\begin{smallmatrix} 1\\2\end{smallmatrix}}, {\begin{smallmatrix} 2\\2\end{smallmatrix}}, {\begin{smallmatrix} & 1& \\2& &1\end{smallmatrix}}, {\begin{smallmatrix} 2& &1 \\ &2& \end{smallmatrix}}, {\begin{smallmatrix} 2& &1& \\ &2& &1 \end{smallmatrix}} \}.
$$
Here, the simple modules $S_1$ and $S_2$ correspond to the strings of zero length  $e_1$ and $e_2$, respectively associated to vertices $1$ and $2$, and the indecomposable projective modules $P_1$ and $P_2$ respectively correspond to the strings $\beta\alpha^{-1}$ and $\gamma$.

One can easily check that bricks in $\modu A$ correspond to the strings $e_1, e_2$, and  $\beta$. That is, $\brick A=\{{\begin{smallmatrix} 1\end{smallmatrix}}, {\begin{smallmatrix} 2\end{smallmatrix}}, {\begin{smallmatrix} 1\\2\end{smallmatrix}} \}$. Moreover, ${\begin{smallmatrix} 2\\2\end{smallmatrix}}$ and ${\begin{smallmatrix} & 1& \\2& &1\end{smallmatrix}}$ respectively correspond to $P_2$ and $P_1$, which are evidently $\tau$-rigid. Also, a simple computation shows that $\tau({\begin{smallmatrix} 1\\1\end{smallmatrix}})= {\begin{smallmatrix} 2\\2\end{smallmatrix}}$, hence, $\Hom_A({\begin{smallmatrix} 1\\1\end{smallmatrix}}, \tau({\begin{smallmatrix} 1\\1\end{smallmatrix}}))=0$. In fact, the indecomposable $\tau$-rigid modules are specified by the strings $\alpha, \gamma$ and $\beta \alpha^{-1}$. Thus,  $\textit{i}\taurigid A=\{ {\begin{smallmatrix} 1\\1\end{smallmatrix}},{\begin{smallmatrix} 2\\2\end{smallmatrix}}, {\begin{smallmatrix} & 1& \\2& &1\end{smallmatrix}} \}$.

Since $A$ is representation-finite, there are only finitely many torsion classes. As depicted in Figure 1, the lattice structure of $\tors A$ can be described in terms of a finite $2$-regular directed graph. In particular, each vertex in Figure 1 denotes a torsion class $\mathcal{T}$ in $\tors A$, labeled by the $\tau$-rigid $A$-module $M$ that generates $\mathcal{T}$, that is, $\mathcal{T}:=\Fac M$. Moreover, each arrow in Figure 1 indicates a cover relation in $\tors A$ induced by the inclusion.

Let $\mathcal{T}$ be the torsion class labeled by $\{{\begin{smallmatrix} & 1& \\2& &1\end{smallmatrix}}\}$, that is, $\mathcal{T}:=\Fac({\begin{smallmatrix} & 1& \\2& &1\end{smallmatrix}})$. Then, the set of indecomposables in $\mathcal{T}$ is $\{{\begin{smallmatrix} 1\end{smallmatrix}}, {\begin{smallmatrix} 1\\1\end{smallmatrix}}, {\begin{smallmatrix} 1\\2\end{smallmatrix}}, {\begin{smallmatrix} & 1& \\2& &1\end{smallmatrix}}\}$. Put $\mathcal{F}:=\mathcal{T}^\perp$, and observe that the set of indecomposables in $\mathcal{F}$ is $\{{\begin{smallmatrix} 2\end{smallmatrix}}, {\begin{smallmatrix} 2\\2\end{smallmatrix}}\}$. 
Evidently, $(\mathcal{T},\mathcal{F})$ is a brick-splitting torsion pair in $\modu A$. However, $(\mathcal{T},\mathcal{F})$ is not splitting, particularly because ${\begin{smallmatrix} 2& &1 \\ &2& \end{smallmatrix}}$ and ${\begin{smallmatrix} 2& &1& \\ &2& &1 \end{smallmatrix}}$ do not belong to $\mathcal{T}$ nor $\mathcal{F}$. 

If $\mathcal{T}':=\Fac({\begin{smallmatrix} 1\\1\end{smallmatrix}})$, one can similarly show that the torsion pair $(\mathcal{T}',\mathcal{F}')$ is brick-splitting, but not splitting.

We note that, unlike $\mathcal{T}$ and $\mathcal{T}'$, a simple computation shows that the torsion class labeled by $\{{\begin{smallmatrix} 2\\2\end{smallmatrix}}\}$ is not brick-splitting. More precisely, if $\mathcal{T}'':=\Fac({\begin{smallmatrix} 2\\2\end{smallmatrix}})$, and $\mathcal{F}'':=\mathcal{T}''^\perp$, then the brick ${\begin{smallmatrix} 1\\2\end{smallmatrix}}$ does not belong to $\mathcal{T}''$, nor $\mathcal{F}''$.

Finally, observe that $\modu A$ has no non-trivial splitting torsion pair. 
\end{example} 

\begin{remark}
Primarily motivated by $\tau$-tilting theory, and more specifically via ``brick-$\tau$-rigid correspondence" established in \cite{DIJ}, recent studies of bricks and indecomposable $\tau$-rigid modules have been developed parallel to each other. In that regard, analogous to the notion of brick-splitting torsion pairs, one can naturally define the notion of $\tau$-splitting torsion pairs: $(\mathcal{T},\mathcal{F})$ in $\modu A$ is a \emph{$\tau$-splitting} torsion pair if each indecomposable $\tau$-rigid $A$-module belongs to $\mathcal{T}$ or $\mathcal{F}$.

Although we do not study the notion of $\tau$-splitting pairs in this work, we point out that the notions of brick-splitting and $\tau$-splitting are different. 
In particular, for Example \ref{Example: Non-trivial brick-splitting torsion pairs}, recall that $(\mathcal{T}', \mathcal{F}')$ is a brick-splitting pair, where $\mathcal{T}'=\Fac({\begin{smallmatrix} 1\\1\end{smallmatrix}})$. However, once can easily show that this pair does not split the set $\textit{i}\taurigid A=\{ {\begin{smallmatrix} 1\\1\end{smallmatrix}},{\begin{smallmatrix} 2\\2\end{smallmatrix}}, {\begin{smallmatrix} & 1& \\2& &1\end{smallmatrix}} \}$, because ${\begin{smallmatrix} & 1& \\2& &1\end{smallmatrix}} \notin \mathcal{T}'\cup \mathcal{F}'$. Namely, $(\mathcal{T}', \mathcal{F}')$ is not a $\tau$-splitting pair.
\end{remark}

\begin{figure}\label{Fig. for Example}
\begin{center}
    \begin{tikzpicture}
\node at (0,4) {$\modu A$};
    \draw [->] (-0.1,3.75)--(-1,3.25); 
\node at (-1,3) {$\{{\begin{smallmatrix} & 1& \\2& &1\end{smallmatrix}}\}$};
    \draw [->] (-1,2.75)--(-1,2.25); 
\node at (-1,2) {$\{{\begin{smallmatrix} 1\\1\end{smallmatrix}}\}$};
    \draw [->] (-1,1.75)--(-0.1,1.25); 
\node at (0,1) {$0$};
    \draw [->] (1,2.25)--(0.1,1.25); 
\node at (1,2.5) {$\{{\begin{smallmatrix} 2\\2\end{smallmatrix}}\}$};
    \draw [->] (0,3.75)--(1,2.75); 
    \end{tikzpicture}
\end{center}
\caption{Lattice of torsion classes in Example \ref{Example: Non-trivial brick-splitting torsion pairs}. A non-trivial torsion class is specified by the $\tau$-rigid module that generates it.}
\end{figure}
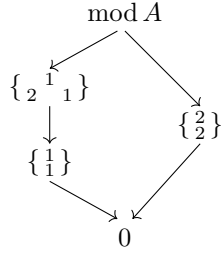

The following proposition shows how brick-splitting torsion pairs can provide us with further information on the structure of the algebras under consideration. Let us recall that if $\mathcal{C}$ is a subcategory of $\modu A$, then $\simp A\cap \mathcal{C}$ denotes the set of non-isomorphic simple $A$-modules belonging to $\mathcal{C}$.

\begin{proposition} \label{Prop: extension between simples}
Let $A$ be a connected algebra and suppose $(\mathcal{T},\mathcal{F})$ is a non-trivial brick-splitting torsion pair in $\modu A$. Then, for each $S_1 \in \simp A\cap \mathcal{T}$ and $S_2 \in \simp A \cap \mathcal{F}$, we have $\Ext^1_A(S_2, S_1)=0$. Hence, in $\Omega$, there is no arrow from the vertices corresponding to $\simp A \cap \mathcal{F}$ to the vertices corresponding to $\simp A \cap \mathcal{T}$.
\end{proposition}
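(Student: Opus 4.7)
The plan is to reduce the proposition to a direct application of Lemma \ref{Lemma: brick-splitting implies no Ext between bricks}, which is the main technical tool already at our disposal. The bulk of the argument is essentially a bookkeeping exercise once we identify the right inputs to the lemma.

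First, I would observe that every simple module $S$ is automatically a brick, since $\End_A(S)$ is a division algebra by Schur's lemma. Thus both $S_1$ and $S_2$ qualify as bricks in $\mathcal{T}$ and $\mathcal{F}$, respectively. Moreover, $S_1\not\cong S_2$: indeed, if they were isomorphic they would jointly lie in $\mathcal{T}\cap\mathcal{F}=0$, contradicting that bricks are nonzero. Since $S_1$ and $S_2$ are non-isomorphic simples, any $A$-module homomorphism $S_2\to S_1$ has a submodule of $S_2$ as kernel and a submodule of $S_1$ as image, so it must be zero. That is, $\Hom_A(S_2,S_1)=0$.

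Next, I would invoke Lemma \ref{Lemma: brick-splitting implies no Ext between bricks} with $X=S_1\in\mathcal{T}$ and $Y=S_2\in\mathcal{F}$: both are bricks, the torsion pair $(\mathcal{T},\mathcal{F})$ is brick-splitting, and the Hom-vanishing hypothesis has just been verified. The lemma immediately yields $\Ext^1_A(S_2,S_1)=0$, which is the first assertion. The statement about $\Omega$ is then a direct translation: by definition of the Ext-quiver, an arrow from the vertex of $S_2$ to the vertex of $S_1$ exists if and only if $\Ext^1_A(S_2,S_1)\neq 0$, and we have just ruled this out.

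There is essentially no obstacle to this proof; the entire content lies in Lemma \ref{Lemma: brick-splitting implies no Ext between bricks}, whose proof uses the middle term of a non-split extension of $S_2$ by $S_1$ as an indecomposable length-two object in the wide subcategory $\Filt(S_1\oplus S_2)$, hence a brick not contained in either $\mathcal{T}$ or $\mathcal{F}$. The hypothesis that $A$ is connected is not used in the argument itself; it is a standing assumption of this section, relevant instead to the downstream triangular-decomposition corollary that follows the proposition.
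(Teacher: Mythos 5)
Your proof is correct and takes essentially the same approach as the paper: the paper inlines the very argument (take a non-split extension of $S_2$ by $S_1$, observe the middle term $E$ is a brick since $S_1\not\cong S_2$ are simple, and use brick-splitting to force $E$ into $\mathcal{T}$ or $\mathcal{F}$, each of which gives a contradiction), whereas you simply delegate this step to Lemma~\ref{Lemma: brick-splitting implies no Ext between bricks}, whose proof is exactly that argument. Your side observations — that $S_1\not\cong S_2$ because $\mathcal{T}\cap\mathcal{F}=0$, hence $\Hom_A(S_2,S_1)=0$, and that neither connectedness nor non-triviality is actually used in this particular proof — are accurate.
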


\begin{proof}
Let $S_1 \in \simp A\cap \mathcal{T}$ and $S_2 \in \simp A \cap \mathcal{F}$. Observe that any non-split short exact sequence
$$0 \to S_1 \stackrel{u}{\to} E \stackrel{v}{\to} S_2 \to 0$$
is such that $E$ is a brick, since $S_1, S_2$ are non-isomorphic. By the brick-splitting property, we have $E \in \mathcal{T}\cup \mathcal{F}$. If $E \in \mathcal{T}$, then $v=0$, a contradiction. If $E \in \mathcal{F}$, then $u=0$, a contradiction.
\end{proof}

As discussed below, the preceding proposition has some interesting consequences for certain families of algebras. Before stating such results, we show a useful lemma.

\begin{lemma}
Let $A$ be an algebra and $(\mathcal{T},\mathcal{F})$ be a (brick-)splitting torsion pair in $\modu A$. For each quotient algebra of $A$, say $B$, we have that $(\mathcal{T}\cap \modu B,\mathcal{F}\cap \modu B)$ is a (brick-)splitting torsion pair in $\modu B$.
\end{lemma}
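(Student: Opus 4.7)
The plan is to view $\modu B$ as a Serre subcategory of $\modu A$ via the canonical embedding induced by the surjection $A\twoheadrightarrow B$, and then to check that the candidate pair $(\mathcal{T}\cap\modu B,\mathcal{F}\cap\modu B)$ inherits all the required properties from $(\mathcal{T},\mathcal{F})$. Concretely, writing $J:=\ker(A\to B)$, the full subcategory $\modu B\subseteq \modu A$ consists of those $A$-modules annihilated by $J$, and is therefore closed under submodules, quotients and extensions; moreover the inclusion is fully faithful, so $\Hom_B(X,Y)=\Hom_A(X,Y)$ and $\End_B(X)=\End_A(X)$ for $X,Y\in\modu B$.

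First, I would verify that $(\mathcal{T}\cap\modu B,\mathcal{F}\cap\modu B)$ is a torsion pair in $\modu B$. The Hom-orthogonality condition $\Hom_B(\mathcal{T}\cap\modu B,\mathcal{F}\cap\modu B)=0$ is immediate from full faithfulness. Closure of $\mathcal{T}\cap\modu B$ under $B$-quotients follows because both $\mathcal{T}$ and $\modu B$ are closed under $A$-quotients, and dually for submodules in $\mathcal{F}\cap\modu B$. For the canonical decomposition, given $M\in\modu B$ take the canonical sequence $0\to t(M)\to M\to M/t(M)\to 0$ with respect to $(\mathcal{T},\mathcal{F})$ in $\modu A$; since $\modu B$ is closed under submodules and quotients, both $t(M)$ and $M/t(M)$ lie in $\modu B$, giving the required decomposition in $\modu B$ with torsion part in $\mathcal{T}\cap\modu B$ and torsion-free part in $\mathcal{F}\cap\modu B$.

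Next, I would transfer the (brick-)splitting property. If $(\mathcal{T},\mathcal{F})$ is splitting and $X\in\ind B$, then by full faithfulness $\End_B(X)=\End_A(X)$ is local, so $X\in\ind A$, hence $X\in\mathcal{T}\cup\mathcal{F}$; since $X\in\modu B$, we get $X\in(\mathcal{T}\cap\modu B)\cup(\mathcal{F}\cap\modu B)$. The brick-splitting case is identical: if $X\in\brick B$ then $\End_B(X)=\End_A(X)$ is a division algebra, so $X\in\brick A$, and the assumption that $(\mathcal{T},\mathcal{F})$ is brick-splitting yields $X\in(\mathcal{T}\cap\modu B)\cup(\mathcal{F}\cap\modu B)$.

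There is essentially no serious obstacle in this argument; the only potential pitfall is to keep carefully in mind that Serre-closure of $\modu B$ inside $\modu A$ is what makes every step work (closure under submodules is needed for the torsion-free part of the canonical sequence, and closure under quotients for the torsion part), so it is worth stating this identification explicitly at the outset before invoking it repeatedly.
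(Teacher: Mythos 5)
Your proof is correct and follows the same approach as the paper's: view $\modu B$ as a Serre subcategory of $\modu A$ so the torsion pair restricts, and transfer the (brick-)splitting property via $\brick B = \brick A \cap \modu B$ (resp. $\ind B = \ind A \cap \modu B$). The paper's one-line proof only cites the brick equality and leaves the torsion-pair verification and the $\ind$ case implicit, while you spell out those routine checks in full.
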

\begin{proof}
Let $B = A/I$. The result follows from the well-known equality $\brick B = \brick A \cap \modu B$. More specifically, bricks over $B$ are exactly the bricks over $A$ that are annihilated by $I$.
\end{proof}

Using Proposition \ref{Prop: extension between simples}, we obtain the following interesting result. As previously defined in Section \ref{Sec: Summary of Main Results}, $A$ is called a \emph{fully cyclic} algebra if for any idempotent $e$ of $A$, from $eA(1-e)=0$ we get $e=0$ or $e=1$. 
Now we prove Corollary \ref{Cor: No brick-splitting over self-injective algebras}.

\begin{corollary}\label{Cor: fully cyclic and self-injective}
If $A$ is fully cyclic, the only brick-splitting torsion pairs in $\modu A$ are trivial. In particular, if $A$ is a connected self-injective algebra, then the only brick-splitting torsion pairs in $\modu A$ are $(0,\modu A)$ and $(\modu A, 0)$.
\end{corollary}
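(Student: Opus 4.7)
The plan is to handle the two assertions of the corollary in sequence. For the first, the main point is already signalled in the paragraph between Proposition~\ref{Prop: extension between simples} and this corollary: any brick-splitting torsion pair $(\mathcal{T},\mathcal{F})$ produces an idempotent $e_{\mathcal{T}}=\sum_{S_i\in\simp A\cap\mathcal{T}}e_i$ satisfying $e_{\mathcal{T}}A(1-e_{\mathcal{T}})=0$. I would justify this by combining Proposition~\ref{Prop: extension between simples} (which forbids arrows in $\Omega$ from $\mathcal{F}$-simples to $\mathcal{T}$-simples, and, since $\simp A\subseteq\mathcal{T}\cup\mathcal{F}$, also forbids any directed path in $\Omega$ crossing between the two sides) with the path criterion recalled in the Notations, which then rules out $e_iAe_j=\Hom_A(Ae_i,Ae_j)$ for $i\in I_{\mathcal{T}},\ j\in I_{\mathcal{F}}$ and gives the claimed vanishing upon summation. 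One also needs the small remark that $A$ fully cyclic implies $A$ connected, since any product decomposition $A\cong A_1\times A_2$ furnishes a central idempotent violating the fully cyclic condition, so Proposition~\ref{Prop: extension between simples} applies without caveat. Once $e_{\mathcal{T}}A(1-e_{\mathcal{T}})=0$ is in hand, fully cyclic forces $e_{\mathcal{T}}\in\{0,1\}$; since any non-zero torsion (resp.\ torsion-free) class contains a simple as a quotient (resp.\ submodule), this collapses $(\mathcal{T},\mathcal{F})$ to a trivial pair.

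For the second assertion, it suffices to show that every connected self-injective algebra is fully cyclic. Assume $A$ is such and $e\in A$ satisfies $eA(1-e)=0$; the plan is to upgrade this to the symmetric vanishing $(1-e)Ae=0$, after which $e$ is central, $A\cong eAe\times (1-e)A(1-e)$ is a non-trivial direct product unless $e\in\{0,1\}$, and connectedness supplies the conclusion. I would invoke the Nakayama permutation $\sigma$ available on any basic self-injective algebra: because each $Ae_i$ is injective with simple socle $S_{\sigma(i)}$, there is an isomorphism $Ae_i\cong D(e_{\sigma(i)}A)$ of left $A$-modules, yielding the dimension symmetry $\dim_k e_jAe_i=\dim_k e_{\sigma(i)}Ae_j$ for all primitive idempotents $e_i,e_j$.

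The crucial intermediate step is to verify that $\sigma$ stabilises $I_e$. For this, observe that $eA(1-e)=0$ forces $e$ to annihilate the left module $A(1-e)$, so every composition factor of $A(1-e)$—in particular every summand of $\soc(A(1-e))=\bigoplus_{j\in I_{1-e}}S_{\sigma(j)}$—is indexed by $I_{1-e}$; this gives $\sigma(I_{1-e})\subseteq I_{1-e}$ and hence $\sigma(I_e)=I_e$ by bijectivity. Reindexing the sum defining $\dim_k(1-e)Ae$ via the Nakayama symmetry and this stability then converts it into $\dim_k eA(1-e)=0$, finishing the proof. The main obstacle is precisely this passage from $eA(1-e)=0$ to $(1-e)Ae=0$: without the Nakayama symmetry there is no obvious route from one vanishing to the other, but once the small verification $\sigma(I_e)=I_e$ is in place, everything else is routine bookkeeping.
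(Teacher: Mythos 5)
Your proof is correct and follows essentially the same route as the paper: Proposition \ref{Prop: extension between simples} gives the triangular decomposition and hence the non-trivial idempotent violating fully-cyclicity, and the self-injective case is handled by showing the Nakayama permutation stabilises the two vertex sets so that the vanishing $eA(1-e)=0$ propagates to $(1-e)Ae=0$. The paper packages the Nakayama-permutation step as the isomorphism $D((1-e)A)\cong A(1-e)$ and concludes by a module-level dimension count, whereas you unfold it into the component-level symmetry $\dim_k e_jAe_i=\dim_k e_{\sigma(i)}Ae_j$; these are the same argument, and your extra observations (that fully cyclic implies connected, and that the path-criterion rather than the mere absence of a single arrow is what yields $e_{\mathcal{T}}A(1-e_{\mathcal{T}})=0$) are accurate refinements that the paper leaves implicit via its standing connectedness assumption.
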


\begin{proof}
Assume that there is a non-trivial brick-splitting torsion pair $(\mathcal{T}, \mathcal{F})$ in $\modu A$. Then, each of $\simp A \cap \mathcal{F}$ and $\simp A \cap \mathcal{T}$ is non-empty. 
If $\Omega_0$ denotes the vertex set of $\Omega$, then it follows from Proposition \ref{Prop: extension between simples} that we can partition $\Omega_0$ into two non-empty subsets $\mathcal{S}_1$ and $\mathcal{S}_2$ such that there is no arrow from a vertex of $\mathcal{S}_1$ to a vertex of $\mathcal{S}_2$. If $e$ denotes the idempotent corresponding to the vertices belonging to $\mathcal{S}_1$, we have $(1-e)Ae=0$. Since $e\neq 0$ and $e\neq 1$, then $A$ is not fully cyclic. This is the desired contradiction, and we conclude the first assertion.

Now, we check that any connected self-injective algebra is fully cyclic. If we assume otherwise, then we get $eA(1-e)=0$ for some idempotent $0 \ne e \ne 1$. Hence, the projective module $A(1-e)$, which has to be injective, is supported only on the vertices in $\Omega_0$ corresponding to $1-e$. This yields that $D((1-e)A) \cong A(1-e)$. By the connectedness of $A$, we get $(1-e)A(1-e) = A$, a contradiction.
\end{proof}

The following proposition shows how certain chains of brick-splitting torsion classes control the shape of the quiver of the algebra under consideration.

\begin{proposition} \label{prop: chain of brick-splitting separating simples}
    Assume that there exists a chain $$0 = \mathcal{T}_0 \subset \mathcal{T}_1 \subset \mathcal{T}_2 \subset \cdots \subset \mathcal{T}_n = \modu A$$
  of brick-splitting torsion classes where $\mathcal{T}_i$ contains exactly $i$ non-isomorphic simple modules. Then $A$ is weakly triangular.
\end{proposition}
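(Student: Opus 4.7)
The plan is to use the chain of brick-splitting torsion classes to produce a linear ordering on the simple modules with respect to which every arrow in $\Omega = \Omega_A$ is either a loop or goes from an earlier to a later vertex. Once that is set up, no oriented cycle in $\Omega$ can involve two distinct vertices, so every cycle must be a power (or composition) of a loop, which is exactly the weak triangularity condition.

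First, I would fix the setup: for each $i$, set $\mathcal{F}_i := \mathcal{T}_i^\perp$. Since each $(\mathcal{T}_i,\mathcal{F}_i)$ is brick-splitting and every simple is a brick, the set $\simp A$ partitions as $(\simp A\cap\mathcal{T}_i)\sqcup(\simp A\cap\mathcal{F}_i)$. By hypothesis the first part has cardinality $i$. The inclusion $\mathcal{T}_{i-1}\subset\mathcal{T}_i$ forces $\simp A\cap\mathcal{T}_{i-1}\subseteq\simp A\cap\mathcal{T}_i$, and since the cardinalities differ by exactly one, this inclusion is strict and adds a unique simple. Labeling the simples $T_1,\ldots,T_n$ so that $T_i$ is the simple added at step $i$, we obtain $\simp A\cap\mathcal{T}_i=\{T_1,\ldots,T_i\}$ and $\simp A\cap\mathcal{F}_i=\{T_{i+1},\ldots,T_n\}$.

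Next, I would apply Proposition \ref{Prop: extension between simples} to each brick-splitting pair $(\mathcal{T}_i,\mathcal{F}_i)$ (whose proof uses only the brick-splitting property): for every $a\le i<b$, since $T_a\in\mathcal{T}_i$ and $T_b\in\mathcal{F}_i$, we get $\Ext^1_A(T_b,T_a)=0$, so there is no arrow $T_b\to T_a$ in $\Omega$. Taking $i=b-1$ and letting $a$ range over $\{1,\ldots,b-1\}$, we conclude that whenever $a<b$ there is no arrow in $\Omega$ from $T_b$ to $T_a$. Consequently, every non-loop arrow in $\Omega$ goes from some $T_a$ to some $T_b$ with $a<b$, so every oriented cycle in $\Omega$ is supported at a single vertex and is a power of a loop. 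This gives weak triangularity of $A$.

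The argument is essentially bookkeeping; there is no serious obstacle. The only minor subtlety is to observe that Proposition \ref{Prop: extension between simples}, though stated for connected $A$ with a non-trivial brick-splitting pair, applies in each step here: the connectedness hypothesis is not actually used in its proof, and the only genuinely trivial indices $i\in\{0,n\}$ impose no condition on $\Omega$ and can be safely skipped.
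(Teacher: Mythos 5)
Your proposal is correct and follows essentially the same route as the paper: label the simples $T_i$ (the paper calls them $U_i$) by the step at which they enter the chain, apply Proposition \ref{Prop: extension between simples} to the brick-splitting pair $(\mathcal{T}_{b-1}, \mathcal{F}_{b-1})$ to kill $\Ext^1_A(T_b, T_a)$ for $a<b$, and conclude that every non-loop arrow in $\Omega$ goes from a lower-indexed to a higher-indexed vertex. Your side remark that the connectedness and non-triviality hypotheses in Proposition \ref{Prop: extension between simples} are not actually needed here is accurate but also immaterial, since the indices $i\in\{0,n\}$ yield no constraint anyway.
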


\begin{proof}
Consider a chain $$0 = \mathcal{T}_0 \subset \mathcal{T}_1 \subset \mathcal{T}_2 \subset \cdots \subset \mathcal{T}_n = \modu A$$
of brick-splitting torsion pairs such that $\mathcal{T}_i$ contains exactly $i$ non-isomorphic simple modules. 
For $1 \le i \le n$, let $U_i$ denote the unique simple module in $\mathcal{T}_i$ that is not in $\mathcal{T}_{i-1}$. Then, Proposition \ref{Prop:brick-splitting and simples} implies that $\Ext^1_A(U_i, U_j)=0$, whenever $j < i$, which yields that $A$ is weakly triangular.
\end{proof}

We finish this subsection with a remark on the preceding proposition.

\begin{remark}
    
Note that the converse of Proposition \ref{prop: chain of brick-splitting separating simples} does not hold. For instance, consider the quiver
$$\xymatrix{& 3 \ar[dr]^\beta & \\
1 \ar[rr]^\epsilon \ar[dr]^\gamma \ar[ur]^\alpha & & 4 \\
& 2 \ar[ur]^\delta&}$$
and $A$ the corresponding path algebra. 
As observed in the proof of Proposition \ref{prop: chain of brick-splitting separating simples}, if there is a chain $$0 = \mathcal{T}_0 \subset \mathcal{T}_1 \subset \mathcal{T}_2 \subset \mathcal{T}_3 \subset \mathcal{T}_4 = \modu A$$ of brick-splitting torsion classes, where $\mathcal{T}_i$ contains exactly $i$ simple modules, then this induces an admissible ordering on the simple modules, or equivalently on the vertices of the quiver. By symmetry, we can assume that $\mathcal{T}_i$ contains exactly the simple modules $S_1, \ldots, S_i$. In particular, $\mathcal{T}_2$ contains exactly the simple modules $S_1, S_2$. 
Consider the module $M$ given by the string $\beta^{-1}\epsilon\gamma^{-1}$. Observe that $M$ is a brick whose top is $S_1 \oplus S_3$ and socle is $S_2 \oplus S_4$. If $M$ lies in $\mathcal{T}_2$, then $S_3 \in \mathcal{T}_2$, a contradiction. If $M$ lies in the corresponding torsion-free class $\mathcal{F}_2$, then $S_2 \in \mathcal{F}_2$, again a contradiction. 
\end{remark}

\section{Brick-directed algebras}\label{Section: Brick-directed algebras}
For an algebra $A$, a \emph{path} in $\modu A$ is a sequence $X_0 \xrightarrow{f_1} X_1 \xrightarrow{f_2}  \cdots X_{t-1} \xrightarrow{f_t}  X_{t}$, where $t$ is a positive integer, $X_0, \cdots , X_t$ are indecomposables, and for $1\leq i \leq t$, $f_i:X_{i-1}\rightarrow X_i$ is a non-zero non-invertible morphism in $\modu A$. Such a path is called a \emph{cycle} if $X_0$ and $X_t$ are isomorphic. We note that in a path in $\modu A$, some of the morphisms may belong to $\rad^{\infty}(A)$, that is to say, a path in $\modu A$ is not necessarily a path in $\Gamma_A$. 

Recall that $X \in \ind (A)$ is said to be \emph{directing} if it does not lie on any cycle in $\modu A$. Such modules are historically important. It is known that the preprojective components (similarly, the preinjective components) consist entirely of directing modules.
An algebra $A$ is called \emph{representation-directed} if all indecomposables are directed. Observe that if $A$ is representation-infinite, then $\modu A$ admits a cycle, hence every representation-directed algebra is representation-finite (for instance, see \cite[IX. Corollary 3.4]{ASS}). 

\subsection{Characterizations of brick-directedness}\label{Characterizations of brick-directedness}
As introduced in Section \ref{Sec: Summary of Main Results}, in this work we generalize the above classical notion of representation-directedness. In particular, a cycle $X_0\xrightarrow{f_1} X_1\xrightarrow{f_2}  \cdots \xrightarrow{f_{m-1}}  X_{m-1}\xrightarrow{f_m}  X_m=X_0$ in $\modu A$ is called a \emph{brick-cycle} if $X_i \in \brick A$, for all $0\leq i\leq m$. We say an algebra $A$ is \emph{brick-directed} if there exists no brick-cycle in $\modu A$. 
From the definition, it is immediate that each representation-directed algebra is brick-directed. However, there are many brick-directed algebras that are not representation-directed (see Section \ref{Subsection: Construction of new brick-directed algebras}).

As recalled in Theorem \ref{Thm: Demonet's correspondence}, from \cite{KD} we know that each chain of bricks of $A$ gives rise to a chain of torsion classes in $\modu A$, and this correspondence induces a bijection between the equivalence classes of maximal chains of bricks and the set of maximal chains of torsion classes in $\tors A$. In fact, an interesting characterization of brick-directed algebras can be given in terms of their maximal chain of torsion classes and the notion of brick-splitting torsion pairs. We recall that $Q^b(A)$ denotes the brick-quiver of the algebra $A$, as introduced in Section \ref{Subsection: Brick-quiver}.

\begin{theorem}\label{Thm: Brick-directed & maximal chain of brick-splitting pairs}
For an algebra $A$, the following are equivalent:
\begin{enumerate}
    \item[$(i)$] $A$ is brick-directed;
    \item[$(ii)$] $Q^b(A)$ is acyclic.
\end{enumerate}
Moreover, in this case, there is a bijection between the following objects.
\begin{enumerate}
    \item A total order $J$ on $\brick A = \{B_j\}_{j \in J}$ with $\Hom_A(B_{i}, B_j)=0$, whenever $i < j$ in $J$.
    \item A maximal chain $\{\mathcal{T}_i\}_{i\in I}$ in $\tors A$ such that every $\mathcal{T}_i$ is a brick-splitting torsion class.
\end{enumerate}
\end{theorem}

\begin{proof}
From the construction of the brick quiver $Q^b(A)$, it is clear that $(i)$ and $(ii)$ are equivalent. Hence, we construct a map between objects from $(1)$ to objects from $(2)$ as follows. We consider a totally ordered set $J$ where $\brick A = \{B_j\}_{j \in J}$ with $\Hom_A(B_{i}, B_j)=0$, whenever $i < j$ in $J$. We consider the set $\mathcal{I}$ of all down-closed subsets of $J$. Then $\mathcal{I}$ can be totally ordered by inclusion. For $I \in \mathcal{I}$, let $\mathcal{T}_I$ be the smallest torsion class that contains all $B_i$, for $i \in I$. The map is
$$(*):\quad\{B_j\}_{j \in J} \mapsto \{\mathcal{T}_I\}_{I \in \mathcal{I}}.$$
If $j \in J$ is such that $i < j$ for all $i \in I$, then it is clear that $\Hom_A(\mathcal{T}_I, B_j)=0$, hence $B_j \in \mathcal{T}_I^\perp$. This proves that each $\mathcal{T}_I$ is a brick-splitting torsion class. Assume that the constructed chain $\{\mathcal{T}_I\}_{I \in \mathcal{I}}$ is not maximal and let $\mathcal{T}$ be a torsion class that can be added to properly extend this chain. Let $K \subseteq J$ correspond to the bricks in $\mathcal{T}$. From our assumptions, it follows that $K$ does not form a down closed set. Hence, there exist $B_i, B_j \in \brick A$ such that $B_j \in \mathcal{T}$ while $B_i \not \in \mathcal{T}$, with $i < j$. Now, the torsion class $\mathcal{T}_{\le i}$ has to be comparable to $\mathcal{T}$. Since $B_i \not\in \mathcal{T}$, we have $\mathcal{T} \subseteq\mathcal{T}_{\le i}$, which leads to the contradiction $B_j \in \mathcal{T}_{\le i}$. Hence, $\{\mathcal{T}_I\}_{I \in \mathcal{I}}$ is a maximal chain of brick-splitting torsion classes. 

To prove that the above assignment is bijective, we construct an inverse. Let $\{\mathcal{T}_i\}_{i\in I}$ in $\tors A$ be a maximal chain such that every $\mathcal{T}_i$ is a brick-splitting torsion class. Given a brick $B$, it follows from the maximality of the chain that there is a unique $i_B \in I$ with the property that $B \in \mathcal{T}_{i_B}$ and for $j < i_B$, we have $B \not \in \mathcal{T}_j$. Indeed, one takes $i_B = \bigwedge_{B \in \mathcal{T}_i} i$. If we consider the subchain $\{\mathcal{T}_{i_B}\}_{B \in \brick A}$, this induces a total order $\le$ on $\brick A$. If $U,V \in \brick A$ and $U < V$ with respect to this order, then $\mathcal{T}_{i_U} \subset \mathcal{T}_{i_V}$ and it follows that $V \not \in \mathcal{T}_{i_U}$. Since $\mathcal{T}_{i_U}$ is brick splitting, we get $V \in \mathcal{T}_{i_U}^\perp$ so that $\Hom_A(U,V)=0$. Note that with the notation $B_{i_B}=B$, the totally ordered set $\{i_B \mid B \in \brick A\}$ gives us a way to label the bricks. 

We finish the proof by checking that the assignment
$$\{\mathcal{T}_i\}_{i\in I} \mapsto  \{B_j\}_{j \in \{i_B \mid B \in \brick A\}}$$
provides an inverse of $(*)$. 
Given $\{B_j\}_{j \in J}$, the map $(*)$ assigns to it the chain $\{\mathcal{T}_I\}_{I \in \mathcal{I}}$ which is such that $i_B = \{j \in J \mid B_j \le B \}$. Clearly, for $U, V \in \brick A$, we have $U \le V$ if and only if $i_U \subseteq i_V$, which proves that the second map is a left inverse to $(*)$. Conversely, given a maximal chain $\{\mathcal{T}_i\}_{i\in I}$ of brick-splitting torsion classes, the second map assigns to it a total order on the bricks belonging to $\{i_B \mid B \in \brick A\}$. Given $i \in I$, if $B$ is a brick in $\mathcal{T}_i$, then $\mathcal{T}_{i_B} \subseteq \mathcal{T}_i$. Because every torsion class is uniquely determined by the bricks it contains, we get $\mathcal{T}_i = \bigvee_{B \in \mathcal{T}_i} \mathcal{T}_{i_B}$, which means that $i=\bigvee_{i_B \le i} i_B$. This yields that the second map is also a right inverse to $(*)$.
\end{proof}

By the above theorem, a connected algebra $A$ is brick-directed if and only if there is a chain of torsion classes where every $X\in\brick A$ appears as a labeling of the cover relations. This particularly relates to some study of maximal green sequences. For more details, see \cite{KD} and the references therein.

\medskip

Using the above theorem, we obtain the following result, which gives some useful information on the bound quiver of arbitrary brick-directed algebras. In particular, we prove Corollary \ref{Cor: No Cycle}.

\begin{corollary}\label{Cor: brick-directed is weak-triangular}
If $A$ is brick-directed, then $A$ is weakly triangular.
\end{corollary}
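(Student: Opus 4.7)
The plan is to construct, from the brick-directedness assumption, a chain of brick-splitting torsion classes where the $k$-th term contains exactly $k$ non-isomorphic simple modules, and then apply Proposition \ref{prop: chain of brick-splitting separating simples} directly.

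By Theorem \ref{Thm: Brick-directed & maximal chain of brick-splitting pairs}(2), I would fix a total order $J$ on $\brick A = \{B_j\}_{j \in J}$ such that $\Hom_A(B_i, B_j) = 0$ whenever $i < j$ in $J$. Let $j_1 < \cdots < j_n$ denote the indices in $J$ at which the simple modules $S_1, \ldots, S_n$ occur. For each $k \in \{0,1,\ldots,n\}$, set $\mathcal{T}^{(k)}$ to be the smallest torsion class containing $\{B_j : j \le j_k\}$, with the convention $\mathcal{T}^{(0)} = 0$. Each $\mathcal{T}^{(k)}$ is brick-splitting, since it sits in the maximal chain of brick-splitting torsion classes constructed in the proof of Theorem \ref{Thm: Brick-directed & maximal chain of brick-splitting pairs}.

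The crux is to verify that $\mathcal{T}^{(k)}$ contains precisely the simples $B_{j_1},\ldots,B_{j_k}$. By Lemma \ref{Smallest Torsion Class}, $\mathcal{T}^{(k)} = \Filt(\Fac\{B_j : j \le j_k\})$. A simple module lies in this category only if it already lies in $\Fac\{B_j : j \le j_k\}$, since a simple admits only trivial filtrations. Hence any simple $S \in \mathcal{T}^{(k)}$ satisfies $\Hom_A(B_j, S) \ne 0$ for some $j \le j_k$; if $S = B_{j_l}$ with $l > k$, then $j \le j_k < j_l$ forces this Hom-space to vanish, a contradiction. In particular, $\mathcal{T}^{(n)}$ contains every simple, and by the closure of torsion classes under extensions together with the existence of composition series, $\mathcal{T}^{(n)} = \modu A$.

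I therefore obtain a chain $0 = \mathcal{T}^{(0)} \subsetneq \mathcal{T}^{(1)} \subsetneq \cdots \subsetneq \mathcal{T}^{(n)} = \modu A$ of brick-splitting torsion classes with $\mathcal{T}^{(k)}$ containing exactly $k$ non-isomorphic simples. Proposition \ref{prop: chain of brick-splitting separating simples} then yields that $A$ is weakly triangular. The only delicate step is the Hom-orthogonality argument guaranteeing that the non-simple bricks added in passing from $\mathcal{T}^{(k-1)}$ to $\mathcal{T}^{(k)}$ do not sneak any extraneous simples into the torsion class, and this follows cleanly from the chosen ordering of bricks.
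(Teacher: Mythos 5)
Your proof is correct and essentially follows the same route as the paper: invoke Theorem \ref{Thm: Brick-directed & maximal chain of brick-splitting pairs} to get a chain of brick-splitting torsion classes, extract a subchain whose $k$-th term contains exactly $k$ simples, and feed it into Proposition \ref{prop: chain of brick-splitting separating simples}. The paper's proof simply asserts that such a subchain can be extracted from the maximal chain; you instead construct the subchain explicitly from the total order on $\brick A$ given by part (2) of Theorem \ref{Thm: Brick-directed & maximal chain of brick-splitting pairs}, using $\mathcal{T}^{(k)} = \Filt(\Fac\{B_j : j \le j_k\})$ and the Hom-vanishing from the ordering to pin down exactly which simples appear. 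This is a useful amount of extra detail — in particular the argument that a simple lying in $\Filt(\Fac\mathcal{C})$ must already lie in $\Fac\mathcal{C}$, combined with the chosen ordering, cleanly rules out unwanted simples — but the overall strategy, the theorem invoked, and the final proposition applied are the same as in the paper.
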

\begin{proof}Since $A$ is brick-directed, by Theorem \ref{Thm: Brick-directed & maximal chain of brick-splitting pairs}, there is a maximal chain of brick-splitting torsion pairs in $\tors A$. Thus, we can consider a subchain $$0 = \mathcal{T}_0 \subset \mathcal{T}_1 \subset \mathcal{T}_2 \subset \cdots \subset \mathcal{T}_n = \modu A$$
of the aforementioned maximal chain such that $\mathcal{T}_i$ contains exactly $i$ non-isomorphic simple modules. Now, the result follows from Proposition \ref{prop: chain of brick-splitting separating simples}. 
\end{proof}

Before we give a more combinatorial realization of brick-directed algebras, let us make a remark on weakly triangular algebras and their appearance in our work.

\begin{remark}
We note that weakly triangular algebras have also appeared in the study and classification of \emph{geometrically irreducible algebras} (over algebraically closed fields). These are the algebras for which every module variety $\modu (A,\underline{d})$ is irreducible, where the dimension vector $\underline{d} \in \mathbb{Z}^n_{\geq 0}$ varies. In fact, it is shown that every geometrically irreducible algebras is weakly triangular. For more details and new results, see \cite{BZ} and the references therein. 

Meanwhile, we note that brick-directed algebras and geometrically irreducible algebras are two different subfamilies of weakly triangular algebras.
On the one hand, the path algebra of the $3$-Kronecker quiver is clearly geometrically irreducible, but it is strictly wild, hence it cannot be brick-directed (see Corollary \ref{Cor: strictly wild is never brick-directed}). On the other hand, the local algebra $A=k[x,y]/\langle x^2, y^2, xy\rangle$ is obviously brick-directed, but it is not geometrically irreducible (see \cite[Lemma 2.1]{BZ}). Nevertheless, the study of brick-directed algebras through the geometric lens is an interesting problem for future investigations (also see Remark \ref{Rem: Unique dim vect but not brick-direcetd}).
\end{remark}

\subsection{Basic properties and examples of brick-directed algebras}

We start this subsection with an example of a brick-directed algebra that is not representation-directed.

\begin{example}
Let $A$ be the algebra given in Example \ref{Example: Non-trivial brick-splitting torsion pairs}. More specifically, we have $\brick A=\{{\begin{smallmatrix} 1\end{smallmatrix}}, {\begin{smallmatrix} 2\end{smallmatrix}}, {\begin{smallmatrix} 1\\2\end{smallmatrix}} \}$, and a simple computation shows that 
$$\Hom_A({\begin{smallmatrix} 1\\2\end{smallmatrix}},{\begin{smallmatrix} 2\end{smallmatrix}})=\Hom_A({\begin{smallmatrix} 1\end{smallmatrix}},{\begin{smallmatrix} 1\\2\end{smallmatrix}})=\Hom_A({\begin{smallmatrix} 2\end{smallmatrix}},{\begin{smallmatrix} 1\end{smallmatrix}})=\Hom_A({\begin{smallmatrix} 1\end{smallmatrix}},{\begin{smallmatrix} 2\end{smallmatrix}})=0.$$
Hence, $A$ is brick-directed. We observe that there exists a chain of bricks of $A$ consisting of all bricks. 
In contrast, we note that $A$ is not representation-directed, simply because it admits indecomposables that are not bricks. In fact, one can easily check that every $X \in \brick A$ lies on a cycle in $\modu A$.
\end{example}

\begin{remark}
The previous example shows that brick-directed algebras give a non-trivial generalization of representation-directed algebras. In particular, we note that for a brick-directed algebra $A$, all bricks of $A$ may lie on a cycle in $\modu A$.
\end{remark}

In the following, we give some characterizations of brick-directed algebras and study some of their fundamental properties. 
The following lemma is handy for some of the following arguments. 

\begin{lemma} \label{Lemma: full subcategory}
If $B$ is an algebra such that $\modu B$ is equivalent to a full subcategory of $\modu A$ and $A$ is brick-directed, then $B$ is brick-directed.
\end{lemma}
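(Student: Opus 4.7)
The plan is to transport a hypothetical brick-cycle in $\modu B$ to a brick-cycle in $\modu A$ via the given full embedding, thereby contradicting the brick-directedness of $A$. Let $F \colon \modu B \to \modu A$ denote the equivalence onto a full subcategory; in particular, $F$ is fully faithful and exact-preserving on isomorphisms.

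First, I would verify that $F$ preserves the property of being a brick. This follows immediately from full faithfulness: for any $X \in \modu B$, one has $\End_A(F(X)) \cong \End_B(X)$ as $k$-algebras, so if $\End_B(X)$ is a division algebra, then so is $\End_A(F(X))$. In particular, $F(X)$ is indecomposable in $\modu A$ whenever $X$ is a brick in $\modu B$.

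Next, I would check that $F$ sends non-zero non-invertible morphisms between bricks to non-zero non-invertible morphisms. Non-zeroness is immediate from faithfulness. For non-invertibility, suppose $f \colon X \to Y$ in $\modu B$ is non-invertible but $F(f)$ is an isomorphism in $\modu A$; then by fullness, its inverse has the form $F(g)$ for some $g \colon Y \to X$, and faithfulness applied to $F(fg) = \mathrm{id}_{F(Y)} = F(\mathrm{id}_Y)$ and $F(gf) = \mathrm{id}_{F(X)} = F(\mathrm{id}_X)$ forces $f$ to be invertible, a contradiction.

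Combining these two steps, any brick-cycle $X_0 \xrightarrow{f_1} X_1 \xrightarrow{f_2} \cdots \xrightarrow{f_m} X_m \simeq X_0$ in $\modu B$ transports under $F$ to a sequence $F(X_0) \xrightarrow{F(f_1)} F(X_1) \xrightarrow{F(f_2)} \cdots \xrightarrow{F(f_m)} F(X_m) \simeq F(X_0)$ of bricks in $\modu A$ connected by non-zero non-invertible morphisms, i.e., a brick-cycle in $\modu A$. Since $A$ is brick-directed, no such cycle exists, whence $\modu B$ contains no brick-cycle either, and $B$ is brick-directed. I do not anticipate a significant obstacle here; the entire argument rests solely on the formal properties of a fully faithful functor, and no further input from the structure of $\tors A$ or $\brick A$ is required.
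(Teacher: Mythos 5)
Your proof is correct and follows the same approach as the paper: transport a hypothetical brick-cycle in $\modu B$ to one in $\modu A$ via the full embedding. The paper states this in a single line, while you spell out the routine verifications (full faithfulness preserves endomorphism rings, hence bricks, and reflects non-invertibility), which is fine; note only that the phrase ``exact-preserving on isomorphisms'' is not standard and is unnecessary, since full faithfulness alone suffices.
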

\begin{proof}
Observe that $\modu B$ cannot have any brick-cycle, as otherwise, by fullness assumption, we obtain a brick-cycle in $\modu A$, hence a contradiction.
\end{proof}

As a consequence of the above lemma, one can show that brick-directedness is preserved under several algebraic operations. For the definition and properties of the $\tau$-tilting reduction (shortly, $\tau$-reduction), we refer to \cite{Ja}.

\begin{proposition}\label{Prop: some properties of brick-directed algebras}
Let $A$ be an algebra. Then, 
\begin{enumerate}
    \item If $A$ is brick-directed, then so is $A/J$, for each ideal $J$ in $A$.
    \item If $A$ is brick-directed, then so is $eAe$, for each idempotent $e\in A$.
    \item If $A$ is brick-directed, then so is every $\tau$-reduction of $A$.
\end{enumerate}
\end{proposition}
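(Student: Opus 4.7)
The plan is to deduce all three parts from Lemma \ref{Lemma: full subcategory}. In each case I will exhibit $\modu B$ as equivalent to a full subcategory of $\modu A$, so that any brick-cycle over $B$ transfers to a brick-cycle over $A$, contradicting brick-directedness.

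Part (1) is tautological: $\modu(A/J)$ is literally the full subcategory of $\modu A$ consisting of the modules annihilated by $J$, with the same morphism spaces. For part (2), I would use the multiplication functor
\[
F := Ae \otimes_{eAe}(-) \colon \modu eAe \longrightarrow \modu A,
\]
which is left adjoint to the truncation $e\cdot(-)\colon\modu A\to\modu eAe$. Since $e\cdot F(Y)\cong eAe\otimes_{eAe}Y\cong Y$, the standard adjunction yields
\[
\Hom_A\bigl(F(Y),F(Y')\bigr)\cong \Hom_{eAe}\bigl(Y, e\cdot F(Y')\bigr)\cong \Hom_{eAe}(Y,Y'),
\]
so $F$ is fully faithful. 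Its essential image is therefore a full subcategory of $\modu A$ equivalent to $\modu eAe$, and Lemma \ref{Lemma: full subcategory} applies.

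For part (3), I would invoke Jasso's $\tau$-tilting reduction theorem. Given a basic $\tau$-rigid pair $(U,P)$ with corresponding $\tau$-reduction $B$, Jasso \cite{Ja} establishes an equivalence of abelian categories between $\modu B$ and a certain Wakamatsu-type subcategory of the form ${}^\perp(\tau U)\cap U^\perp$ (intersected with $P^\perp$ when $P\neq 0$) inside $\modu A$, realized by a fully faithful functor. Because this identification is a full embedding of $\modu B$ into $\modu A$, Lemma \ref{Lemma: full subcategory} once more gives that $B$ is brick-directed.

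The only genuine technical input is part (3): one must verify that the $\tau$-tilting reduction truly yields a full embedding of $\modu B$ into $\modu A$, rather than merely identifying $\modu B$ with a Serre quotient. This is exactly the content of Jasso's reduction theorem, so the work is entirely contained in \cite{Ja}. Parts (1) and (2) are essentially formal, and once Lemma \ref{Lemma: full subcategory} is in hand the whole proposition should occupy only a few lines.
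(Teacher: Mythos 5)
Your proof is correct and follows essentially the same route as the paper: part (1) is immediate, part (2) uses the same functor $Ae\otimes_{eAe}-$ (you just spell out the adjunction argument for full faithfulness, which the paper leaves implicit), and part (3) invokes Jasso's $\tau$-tilting reduction realizing $\modu B$ as a wide, hence full, subcategory of $\modu A$, exactly as the paper does by citing \cite[Theorem 3.8]{Ja}. Your parenthetical worry about a "Serre quotient" in part (3) is unfounded — Jasso's theorem does give a full embedding onto a wide subcategory — but since you resolve it by citing the right result, the argument is complete.
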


\begin{proof}
Parts $(1)$ and $(2)$ directly follow from Lemma \ref{Lemma: full subcategory}. In particular, we observe that the functor $Ae _{eAe}\otimes - $ realizes $\modu eAe$ as a full subcategory of $\modu A$. For part $(3)$, it follows from \cite[Theorem 3.8]{Ja} or \cite[Theorem 4.12 (b)]{DI+} that any $\tau$-tilting reduction $B$ of $A$ is such that $\modu B$ can be realized as a wide subcategory of $\modu A$. In particular, if $A$ is brick-directed, then there is no brick-cycle in any wide subcategory of $\modu A$, hence $\modu B$ has no brick-cycle.
\end{proof}

As another useful consequence of Lemma \ref{Lemma: full subcategory}, we get the following result. For the definition and different characterizations of (strictly) wild algebras, we refer to \cite[Chapter XIX]{SS}.

\begin{corollary}\label{Cor: strictly wild is never brick-directed}
Strictly wild algebras are never brick-directed.
\end{corollary}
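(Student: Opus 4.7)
The plan is to reduce the statement to Lemma \ref{Lemma: full subcategory} by exhibiting a finite-dimensional algebra $B$ that is not brick-directed and whose module category embeds as a full subcategory of $\modu A$. By the standard characterization of strictly wild algebras (see \cite[Chapter XIX]{SS}), for any finite-dimensional $k$-algebra $B$ there is a fully faithful exact $k$-linear functor $F\colon \modu B \to \modu A$; consequently, $\modu B$ is equivalent to its essential image, which is a full subcategory of $\modu A$, so Lemma \ref{Lemma: full subcategory} applies.

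To execute the argument, I need to produce one finite-dimensional algebra $B$ that contains a brick-cycle. A convenient choice is $B = kQ/\mathrm{rad}^2(kQ)$, where $Q$ is the oriented cycle on two vertices with arrows $a\colon 1\to 2$ and $b\colon 2\to 1$, so that the relations are $ab=ba=0$. Then the indecomposable projective-injective modules $P_1={\begin{smallmatrix}1\\2\end{smallmatrix}}$ and $P_2={\begin{smallmatrix}2\\1\end{smallmatrix}}$ are bricks, since each has length two with non-isomorphic composition factors, forcing the endomorphism ring to equal $k$. Moreover, the canonical compositions $P_1\twoheadrightarrow S_2\hookrightarrow P_2$ and $P_2\twoheadrightarrow S_1\hookrightarrow P_1$ are non-zero and non-invertible (as $P_1\not\cong P_2$), producing a brick-cycle $P_1\to P_2\to P_1$ in $\modu B$. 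In particular, $B$ is not brick-directed.

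Combining these two steps yields the corollary by contraposition: if $A$ were brick-directed, then Lemma \ref{Lemma: full subcategory} applied to the full embedding $F(\modu B)\subseteq\modu A$ would force $B$ to be brick-directed, contradicting the previous paragraph. The only step that merits an explicit check is that $F$ transports brick-cycles to brick-cycles; this is automatic because a fully faithful $k$-linear functor preserves endomorphism rings (and hence bricks), and reflects both zero morphisms and isomorphisms (and hence non-zero non-invertible morphisms). I do not anticipate any serious obstacle beyond correctly invoking the full and exact embedding supplied by strict wildness; the algebraic content of the argument is essentially packaged inside Lemma \ref{Lemma: full subcategory} together with the explicit brick-cycle in the radical-square-zero $\widetilde{A_1}$-algebra above.
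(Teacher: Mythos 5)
Your proof is correct and is essentially the same as the paper's: both take $B$ to be the radical-square-zero cyclic Nakayama algebra of rank $2$, exhibit the brick-cycle $P_1 \to P_2 \to P_1$ in $\modu B$, and transport it into $\modu A$ via the fully faithful exact functor supplied by strict wildness (which is exactly the content of Lemma \ref{Lemma: full subcategory}). Your explicit verification that fully faithful functors preserve bricks and reflect non-invertible non-zero morphisms is a welcome elaboration of a step the paper leaves implicit.
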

\begin{proof}
Let $B$ be the cyclic Nakayama algebra of rank $2$ whose radical square is zero. If $A$ is strictly wild, then there is a full and faithful functor $\mathcal{F}: \modu B \to \modu A$ that respects indecomposables. Take the brick-cycle $P_1 \to P_2 \to P_1$ in $\modu B$. Then $\mathcal{F}(P_1) \to \mathcal{F}(P_2) \to \mathcal{F}(P_1)$ is a brick-cycle in $\modu A$. Thus, $A$ is not brick-directed.
\end{proof}

We now give a full classification of brick-directed hereditary algebras.

\begin{proposition}\label{Prop: Hereditary brick-directed algs}
Let $A$ be a connected path algebra (i.e., $A=kQ$). Then, $A$ is brick-directed if and only if $Q$ is Dynkin or the Kronecker quiver.
\end{proposition}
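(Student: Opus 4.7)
The plan is to invoke the classical trichotomy for connected finite acyclic quivers: $Q$ is Dynkin, Euclidean, or wild. I will handle each class in turn, treating the Dynkin and Kronecker cases as the ``if'' direction, and the remaining Euclidean and wild cases as the ``only if'' direction.

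For the easy half of ``if'', assume $Q$ is Dynkin. Then $A=kQ$ is representation-finite with the whole $\Gamma_A$ being a preprojective (and preinjective) component, so every indecomposable is directing; in particular $A$ is representation-directed and hence brick-directed. For the Kronecker case, I would first enumerate the bricks explicitly: the preprojectives $\{P_n\}_{n\ge 0}$, the preinjectives $\{I_n\}_{n\ge 0}$, and the quasi-simple regulars $\{R_\lambda\}_{\lambda}$ (one per homogeneous tube; all tubes are homogeneous for Kronecker). Any regular of quasi-length $\ge 2$ has endomorphism algebra $k[t]/(t^n)$, hence is not a brick. Next I would catalog the non-vanishing Hom-spaces between bricks using tame hereditary structure theory: $\Hom_A(P_i,P_j)\ne 0$ iff $i\le j$, $\Hom_A(I_i,I_j)\ne 0$ iff $i\ge j$, $\Hom_A(R_\lambda,R_\mu)\ne 0$ iff $\lambda=\mu$, and there are no non-zero maps from preinjectives to preprojectives or regulars, nor from regulars to preprojectives. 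From these constraints one sees that any hypothetical brick-cycle would have to traverse only the ``forward'' direction and then return, which is impossible unless all the bricks in the cycle are isomorphic; but then every map is a non-zero scalar endomorphism of a brick, hence invertible, contradicting the definition of a cycle.

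For the ``only if'' direction, assume $Q$ is not Dynkin and not Kronecker. If $Q$ is Euclidean but not Kronecker, then $A=kQ$ admits at least one regular tube of rank $r\ge 2$; denote its quasi-simples by $R_1,\ldots,R_r$. These are pairwise non-isomorphic bricks, and the irreducible maps $R_1\to R_2\to\cdots\to R_r\to R_1$ in the mouth of the tube are non-zero and non-invertible (any non-zero morphism between non-isomorphic bricks is non-invertible), yielding an explicit brick-cycle. If $Q$ is wild, then it is a classical result that $A=kQ$ is strictly wild, so Corollary~\ref{Cor: strictly wild is never brick-directed} directly gives that $A$ is not brick-directed.

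The hardest part will be the Kronecker case, because there one cannot fall back on either representation-directedness or a ``bad'' subcategory argument; instead the verification requires the explicit Hom-structure and the observation that every potential loop among preprojectives or preinjectives is blocked by the total ordering of indices, while loops involving regulars are blocked by Hom-orthogonality between distinct tubes and by the bricks being bricks. The remaining cases are essentially bookkeeping: Dynkin is immediate, Euclidean non-Kronecker is a direct exhibition of a brick-cycle along a non-homogeneous tube, and wild is a one-line reduction to the strictly-wild corollary.
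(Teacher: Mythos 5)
Your overall plan (Dynkin, Kronecker, Euclidean non-Kronecker, wild) is the same decomposition as the paper's proof, and the Dynkin, Kronecker, and wild cases all go through in essentially the same way. However, the Euclidean non-Kronecker case as written contains a genuine error.

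You assert that the irreducible maps $R_1\to R_2\to\cdots\to R_r\to R_1$ ``in the mouth of the tube'' give a brick-cycle. But the quasi-simple modules at the mouth of a stable tube of rank $r\ge 2$ are pairwise Hom-orthogonal: they are precisely the simple objects of the serial abelian subcategory formed by the tube, so $\Hom_A(R_i,R_j)=0$ whenever $i\ne j$, and in the Auslander--Reiten quiver of the tube there are no arrows from $R_i$ to $R_{i+1}$ at all (the arrows from $R_i$ go up to the quasi-length-$2$ module $R_i[2]$, and the arrows into $R_{i+1}$ come down from $R_i[2]$). So the cycle you wrote down does not exist. The correct argument, which is what the paper does, is to notice that for $r\ge 2$ the second layer of the tube also consists of bricks: each $R_i[2]$ has two distinct regular composition factors $R_i$ and $R_{i+1}$, hence $\End_A(R_i[2])=k$. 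Then the inclusions $R_i\hookrightarrow R_i[2]$ and projections $R_i[2]\twoheadrightarrow R_{i+1}$ yield a genuine brick-cycle
\[
R_1\to R_1[2]\to R_2\to R_2[2]\to\cdots\to R_r\to R_r[2]\to R_1
\]
of length $2r$. Substituting this for your cycle repairs the gap; the remainder of your argument matches the paper's proof.
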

\begin{proof}
First note that $A=kQ$ is representation-finite if and only if $Q$ is a Dynkin quiver. That being the case, it is well-known that $A=kQ$ is representation-directed and therefore brick-directed. Moreover, if $Q$ is the Kronecker quiver, then $\brick A$ consists of all those indecomposables belonging to the preprojective component and preinjective component of $\Gamma_A$, together with the quasi-simple modules lying on the mouth of the tubes. It is well-known that tubes of $\Gamma_A$ are homogeneous and pairwise hom-orthogonal, that is, there is no brick-cycle consisting of those bricks lying on the tubes. Moreover, morphisms between indecomposable modules in $\modu A$ are directed, in particular, from the preinjective modules there is no non-zero homomorphism to the regular modules on the tubes nor to the preprojective modules. Moreover, there is no non-zero homomorphism from the regular modules on the tubes to the preprojective modules. Hence, for the Kronecker quiver $Q$, the path algebra $kQ$ is brick-directed. 

Suppose $Q$ is an extended Dynkin quiver different from the Kronecker quiver. We show that $A=kQ$ is not brick-directed. In particular, we know that $A=kQ$ is tame and $\Gamma_A$ has at least one stable tube of rank greater than $1$. In particular, there exists a tube $\mathbf{t}$ in $\Gamma_A$ such that the first two layers of $\mathbf{t}$ consist of bricks. Thus, one can easily find a brick-cycle in $\mathbf{t}$. This shows that $A=kQ$ is not brick-directed. 

Finally, if $Q$ is a quiver that is neither Dynkin nor extended Dynkin, the path algebra $kQ$ is strictly wild, and hence Corollary \ref{Cor: strictly wild is never brick-directed} implies that $A=kQ$ is not brick-directed. This finishes the proof.
\end{proof}

\subsection{Construction of new brick-directed algebras}\label{Subsection: Construction of new brick-directed algebras}
We now give an explicit construction of new examples of brick-directed algebras via a process known as ``gluing" two algebras. In particular, we closely follow the construction introduced in \cite[Section 7]{MP1}. This construction allows us to start from a pair of brick-directed algebras whose quivers satisfy some properties and construct a large family of new examples of brick-directed algebras.

\medskip

Let $A=kQ/I$ and $B=kQ'/I'$ be two connected algebras, where $I$ and $I'$ are assumed to be admissible ideals. Let $R$ and $R'$ denote two sets of admissible relations, respectively, in $kQ$ and $kQ'$, which generate $I$ and $I'$. 
Moreover, assume that $Q_0=\{x_1, \ldots, x_n\}$ and $Q'_0=\{y_1, \ldots, y_m\}$ denote, respectively, the vertex set of $A$ and $B$.
Provided that $x_n$ is a sink in $Q$, and $y_1$ is a source in $Q'$, then by $\Lambda:=k\Tilde{Q}/\Tilde{I}$ we denote the algebra obtained from \emph{gluing} of $A$ and $B$ at the vertices $x_n$ and $y_1$, with the bound quiver $(\Tilde{Q}, \Tilde{I})$ constructed as follows: 
\medskip

$\mathbf{\Tilde{Q}}$: The quiver obtained from $Q$ and $Q'$, by identifying the vertices $x_n$ and $y_1$, and keeping the rest of $Q$ and $Q'$ untouched. Hence, the set of vertices of $\Tilde{Q}$ is $\Tilde{Q}_0=(Q_0-\{x_n\})\cupdot(Q'_0-\{y_1\}) \cupdot \{v\}$, where vertex $v$ is obtained from the identification of $x_n$ and $y_1$. Thus, we obtain $\Tilde{Q}_0=\{x_1,\dots,x_{n-1},v,y_2,\dots,y_m\}$, and the set of arrows is given by $\Tilde{Q}$ is $\Tilde{Q}_1= Q_1\cupdot Q'_1$.

\medskip
$\mathbf{\Tilde{I}}$: The admissible ideal in $k\Tilde{Q}$ is generated by the set $\Tilde{R}:= R \cupdot R' \cupdot R_v$. Here, $R_v$ denotes the set of all the paths of length $2$ in $\Tilde{Q}$ that pass through vertex $v$. More specifically, each element of $R_v$ is of the form $\beta\alpha$, for an arrow $\alpha \in Q_1$ ending at vertex $x_n$, and an arrow $\beta \in Q'_1$ starting at vertex $y_1$.

\medskip
From the construction, it is immediate that $(\Tilde{Q}, \Tilde{I})$ contains a copy of $(Q,I)$, as well as a copy of $(Q',I')$, as bound subquivers.
Moreover, other than vertex $v$ in $\Tilde{Q}$, any other vertex in $\Tilde{Q}_0$ belongs either to $Q_0$ or $Q'_0$, and not both. 
We note that because $B$ has a source, it is immediate that $\Lambda=k\Tilde{Q}/\Tilde{I}$ is isomorphic to $A=kQ/I$ if and only if the bound quiver of $B$ consists of only one vertex, that is, if $B\simeq k$.

\medskip

Some of the properties of the gluing process are listed in the following proposition. 

\begin{proposition}\label{Prop:on gluing}
Let $A=kQ/I$ and $B=kQ'/I'$ be two connected algebras. If $\Lambda=k\Tilde{Q}/\Tilde{I}$ is obtained from gluing $A$ and $B$, then $\Lambda$ is a connected algebra of rank $|A|+|B|-1$. Furthermore, the following hold:

\begin{enumerate}
\item If $M$ is indecomposable, then $M$ is either entirely supported over $Q$ or entirely supported over $Q'$.
\item $\Lambda$ is brick-directed if and only if both $A$ and $B$ are brick-directed.
\end{enumerate}
\end{proposition}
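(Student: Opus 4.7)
The plan is to establish the three assertions in order. Connectedness of $\Lambda$ and the rank formula are immediate: $\tilde Q_0$ has exactly $(n-1)+(m-1)+1 = n+m-1$ vertices, and $\tilde Q$ is connected because $Q$ and $Q'$ are connected and share the vertex $v$. For part (1), the plan is to produce, for each $M \in \modu \Lambda$, a short exact sequence $0 \to N \to M \to M/N \to 0$ with $N$ supported on $Q_0$ and $M/N$ supported on $Q'_0$, and then show that it splits. Concretely, take $N_{x_i} := M_{x_i}$ for $i \le n-1$, $N_v := U$ where $U := \sum_{\alpha} \Image(M_\alpha)$ with $\alpha$ running over arrows of $Q_1$ ending at $v$, and $N_{y_j} := 0$ for $j \ge 2$. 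The relations in $R_v$ guarantee $M_\beta(U) = 0$ for every $\beta \in Q'_1$ starting at $v$, so $N$ is indeed a subrepresentation of $M$. A section $s \colon M/N \to M$ is then built by choosing any $k$-linear complement $W$ of $U$ in $M_v$ and letting the only non-trivial vertex component of $s$ be the inclusion $M_v/U \cong W \hookrightarrow M_v$; arrow-compatibility is a direct check again using $M_\beta(U)=0$. Indecomposability of $M$ finally forces $N=0$ or $M/N = 0$.

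For the forward direction of part (2), I would observe that the idempotent $e := \sum_{i=1}^{n-1} e_{x_i} + e_v$ satisfies $e\Lambda e \cong A$: any path in $\tilde Q$ that starts and ends in $Q_0$ must remain inside $Q$, because $v=y_1$ is a source of $Q'$ and so a path leaving $v$ into $Q'$ cannot return, and $\tilde I$ restricts to exactly $I$ on such paths. Proposition~\ref{Prop: some properties of brick-directed algebras}(2) then transfers brick-directedness from $\Lambda$ to $A$, and symmetrically to $B$ via the analogous idempotent $e_v + \sum_{j=2}^{m} e_{y_j}$.

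For the converse, the plan is to apply Theorem~\ref{Thm: Brick-directed & maximal chain of brick-splitting pairs}: starting from total orders $\le_A$ on $\brick A$ and $\le_B$ on $\brick B$ with the Hom-vanishing property, I would build a total order $\le_\Lambda$ on $\brick \Lambda = \brick A \cup \brick B$ (whose intersection is $\{S_v\}$ by part (1)) retaining the same property. The key cross-Hom computation, using the sink/source condition at $v$ together with the relations in $R_v$, is that for $X \in \brick A \setminus \{S_v\}$ and $Y \in \brick B \setminus \{S_v\}$,
\[
\Hom_\Lambda(X,Y) \ne 0 \iff \Hom_A(X, S_v) \ne 0 \text{ and } \Hom_B(S_v, Y) \ne 0,
\]
and symmetrically with the roles of $A$ and $B$ exchanged. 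Setting $A_+ := \{X \in \brick A \setminus \{S_v\} : X >_A S_v\}$, $A_- := \brick A \setminus (\{S_v\} \cup A_+)$, and $B_\pm$ analogously, I would declare $\le_\Lambda$ to place $A_- \cup B_-$ first (in any common extension of $\le_A$ and $\le_B$), then $S_v$, then $A_+ \cup B_+$. A routine case analysis then shows that $X <_\Lambda Y$ always forces $\Hom_\Lambda(X,Y) = 0$, each case being ruled out either by Hom-vanishing inside $\le_A$ or $\le_B$, or by the cross-Hom criterion above.

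The main obstacle is exactly this converse of part (2). The two cross-Hom criteria are asymmetric: going from $\brick A$ to $\brick B$ requires $S_v$ to lie in $\topm_A(X)$ and in $\soc_B(Y)$, while going from $\brick B$ to $\brick A$ requires only simultaneous non-zero support at $v$. The single ``above/below $S_v$'' cut nevertheless suffices because the sink/source condition at $v$ forces $(\soc_A Y)_v = Y_v$ for $Y \in \modu A$ and $(\topm_B X)_v = X_v$ for $X \in \modu B$, making ``support at $v$'' and ``$S_v$ in top/socle'' coincide on the relevant side.
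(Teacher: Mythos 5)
Your proposal is correct, and it is in fact more careful than the paper's own argument. The paper dismisses part~(1) as ``clear'' and, for part~(2), asserts that $Q^b(\Lambda)$ ``is nothing but the gluing of $Q^b(A)$ and $Q^b(B)$ at $S_v$,'' using that $S_v$ is a source in $Q^b(A)$ (simple projective over $A$, since $x_n$ is a sink) and a sink in $Q^b(B)$ (simple injective over $B$). Your split-exact-sequence proof of~(1) fills a real gap, and your cross-Hom analysis in~(2) exposes a subtlety that the paper's phrasing glosses over: $Q^b(\Lambda)$ is \emph{not} the naive amalgam of the two brick quivers at $S_v$, because for $Y\in\brick B\setminus\{S_v\}$ and $X\in\brick A\setminus\{S_v\}$ any linear map $Y_v\to X_v$ extends to a $\Lambda$-morphism, so there is an extra arrow $Y\to X$ exactly when $Y_v\neq 0$ and $X_v\neq 0$ (equivalently $\Hom_B(Y,S_v)\neq 0$ and $\Hom_A(S_v,X)\neq 0$), while $\Hom_\Lambda(X,Y)=0$ always since $S_v$ is projective over $A$. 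Your observation that the sink/source condition makes $\soc_A(X)_v=X_v$ and $\topm_B(Y)_v=Y_v$ is exactly what reconciles the two sides of the criterion, and your ``$A_-\cup B_-$, then $S_v$, then $A_+\cup B_+$'' order does verify Theorem~\ref{Thm: Brick-directed & maximal chain of brick-splitting pairs}(2) by a correct case check. Two small remarks: the extra $B\!\to\!A$ arrows never close a cycle because there are no arrows from $\brick A\setminus\{S_v\}$ into $\brick B$ and $S_v$ is a source on the $A$-side and a sink on the $B$-side, which is the point the paper implicitly relies on; and for the forward direction it is slightly cleaner to note that $A$ and $B$ are quotient algebras of $\Lambda$ (kill the off-side vertices and arrows, which sends $\tilde R$ into the appropriate relations) and invoke Proposition~\ref{Prop: some properties of brick-directed algebras}(1), avoiding the verification $\tilde I\cap kQ=I$ needed to identify $e\Lambda e$ with $A$.
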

\begin{proof}
The first statement is clear. For the second statement, consider the brick quivers $Q^b(A)$ and $Q^b(B)$ of $A$ and $B$, respectively. Let $S_v$ be the simple module of $\Lambda$ corresponding to the vertex $v$ obtained from gluing the sink $x_n$ in $A$ with the source $y_1$ in $B$. 
We first observe that $S_v$ is a source in $Q^b(A)$ while it is a sink in $Q^b(B)$. Statement (2) follows from the fact that the brick quiver $Q^b(\Lambda)$ of $\Lambda$ is nothing but the gluing of $Q^b(A)$ and $Q^b(B)$ at $S_v$.
\end{proof}

Using the construction and proposition above, in the rest of this subsection we develop some tools to prove Corollary \ref{Cor on various families of brick-directed algebras}.  For that, we begin with some useful observations on some explicit examples of brick-directed algebras of smaller ranks that we use to construct brick-directed algebras of higher ranks. Meanwhile, let us recall that, by Drozd's trichotomy theorem, each algebra $A$ falls into exactly one of the three disjoint subfamilies, given by representation-finite, (representation-infinite) tame or wild algebras. For the definitions and further details on these subfamilies of algebras and the trichotomy theorem, see \cite[Chapter XIX]{SS}.

\medskip

First, consider the local algebra $L_n= k[x_1,\cdots, x_n]/\langle x_ix_j : 1\leq i\leq j \leq n \rangle$. Because each local algebra is of rank $1$, the bound quiver of $L_n$ consists of $n$ loops that start and end at the same vertex, subject to all quadratic relations. It is well-known that $L_1$ is representation-finite, $L_2$ is (representation-infinite) tame, and $L_n$ is wild for all $n>2$. Observe that $\brick L_n$ consists of only the simple module associated to the unique vertex of the quiver of $L_n$. Hence, $L_n$ is a trivially brick-finite and brick-directed.

\medskip

Next, we give a concrete family of brick-finite algebras of rank $n>1$ which are (representation-infinite) tame and brick-directed.

\begin{example}\label{Example: windwheel alg of rank 2}
For a positive integer $n>1$, let $Q_n$ denote the quiver
\begin{center}
    \begin{tikzpicture}
\node at (2.5, 2.5) {$\circ$}; 
\node at (2.6, 2.3) {$1$}; 
    \draw[->] (2.5,2.55) arc (0:340:0.45cm);
\node at (1.4,2.5) {$\alpha$};
        \node at (3.5,2.5) {$\circ$};
        \node at (3.4,2.3) {$2$};
   \draw [->] (2.55,2.5) --(3.45,2.5);
   \node at (3,2.75) {$\beta_1$};
        \node at (4.95,2.5) {$\circ$};
        \node at (4.9,2.3) {$n-1$};
    \draw [dashed] (3.55,2.5) --(4.85,2.5); 
        \draw [->] (5,2.5) --(5.9,2.5) ; 
       \node at (5.5,2.75) {$\beta_{n-1}$};
        \node at (5.95,2.5) {$\circ$};
        \node at (5.92,2.3) {$n$};
    \draw[->] (6,2.47) arc (190:530:0.45cm);
\node at (7.1,2.5) {$\gamma$};
    \end{tikzpicture}
\end{center}
where all arrows $\beta_i$, for $1\leq i \leq n-1$, are equi-oriented, that is, $\beta_i: i\rightarrow i+1$. In particular, the bar $\mathbf{b}:=\beta_{n-1}\cdots \beta_1$, which starts at $1$ and ends at $n$, is a copy of the equi-oriented Dynkin quiver $A_{n}$.

Set $\Lambda_{n}:=kQ_n/I_n$, with $I_n:=\langle \alpha^2, \gamma^2, \gamma\beta_{n-1}\cdots \beta_1\alpha \rangle$. Then, $\Lambda_{n}$ is a representation-infinite string algebra. In fact, $\Lambda_{n}$ is a minimal representation-infinite algebra of domestic type with a unique band $\mathbf{b}^{-1}\gamma^{-1}\mathbf{b}\alpha$, and $\Lambda_{n}$ is brick-finite (see Remark \ref{Rem: windwheel algebras}).

To list all elements of $\brick \Lambda_{n}$, we can use some standard combinatorial arguments over string algebras (for details, see \cite[Chapter III]{Mo1} and references therein). More precisely, for $X\in \brick \Lambda_{n}$, either $X$ is a simple module, or else $X$ is given by one of the following strings:

\begin{itemize}
    \item $w_j:=\beta_{j}\cdots \beta_1\alpha$, for $1<j\leq n$;
    \item $u_{ij}:=\beta_{j}\cdots \beta_i$, for $1\leq i < j \leq n$;
    \item $w_i:=\gamma\beta_{n-1}\cdots \beta_i$, for $1\leq i<n$;
\end{itemize}
Furthermore, thanks to the combinatorial description of morphisms between string modules, one can easily draw the brick quiver of $\Lambda_m$ and observe that there is no cycle in $Q^b(\Lambda_n)$. Thus, $\Lambda_n$ is a tame algebra that is brick-finite and brick-directed.

As an explicit computation of the above descriptions, for $n=2$, we have
\begin{center}
    \begin{tikzpicture}
    \draw[->] (2.5,2.55) arc (0:340:0.45cm);
\node at (1.4,2.5) {$\alpha$};
        \node at (3.5,2.5) {$\circ$};
        \node at (3.4,2.3) {$2$};
   \draw [->] (2.55,2.5) --(3.45,2.5) ; 
\node at (3,2.75) {$\beta$};
    \node at (2.5, 2.5) {$\circ$}; 
    \node at (2.6, 2.3) {$1$}; 
    \draw[->] (3.52,2.47) arc (190:530:0.45cm);
\node at (4.6,2.5) {$\gamma$};
    \end{tikzpicture}
\end{center}
subject to the relations $\alpha^2=\gamma^2=\gamma\beta\alpha=0$. 
Here, the unique band in $(Q_2,I_2)$ is $\beta^{-1}\gamma^{-1}\beta\alpha$, and bricks in $\modu \Lambda_2$ are given by the set of strings $\{e_1, e_2, \beta, \beta\alpha,\gamma\beta\}$. Here, $e_1$ and $e_2$ respectively denote the string of zero length corresponding to the simple modules $S_1$ and $S_2$. 
Hence, expressed by their radical filtration, we have
$$\brick \Lambda_2=\left\{{\begin{smallmatrix} 1\end{smallmatrix}}, {\begin{smallmatrix} 2\end{smallmatrix}}, {\begin{smallmatrix} 1\\2\end{smallmatrix}}, {\begin{smallmatrix} 1\\1\\2\end{smallmatrix}}, 
{\begin{smallmatrix} 1\\2\\2\end{smallmatrix}}\right\},$$ 
and the brick quiver $Q^b(\Lambda_2)$ is 
$$\xymatrix{ &  & {\begin{smallmatrix} 1\\2\\2\end{smallmatrix}} \ar[d] \ar[dl] \ar[ddr]& \\
2 \ar[urr] \ar[r] \ar[drrr] & {\begin{smallmatrix} 1\\2\end{smallmatrix}} \ar[r] \ar[drr]  & 1 & \\
& & & {\begin{smallmatrix} 1\\1\\2\end{smallmatrix}} \ar[ul]}$$
which has no oriented cycle. Hence, $\Lambda_2$ is a (representation-infinite) string algebra of rank $2$ which is brick-finite and brick-directed.
\end{example}

\begin{remark}\label{Rem: windwheel algebras}
The algebra $\Lambda_n$ in Example \ref{Example: windwheel alg of rank 2} belongs to a particular family of minimal representation-infinite special biserial algebras, so-called ``wind-wheel" algebras, introduced in \cite{Ri2}. As shown in \cite[Prop. 3.1.10]{Mo2}, wind-wheel algebras are brick-finite. 
We further note that, by Proposition \ref{Prop: some properties of brick-directed algebras}, and due to the minimality, each proper quotient of $\Lambda_n$ is a representation-finite brick-directed algebra. 
For instance, the algebra $A$ in Example \ref{Example: Non-trivial brick-splitting torsion pairs} is a proper quotient of $\Lambda_2$ treated in Example \ref{Example: windwheel alg of rank 2}. 

We also use these concrete examples to highlight another fundamental difference between representation-directed algebras and brick-directed algebras. First, we recall that if $A$ is a connected representation-finite algebra, then $A$ is representation-directed if and only if the (unique) component of $\Gamma_A$ is preprojective (see \cite[IX.3, Lemma 3.5]{ASS}). That is, among connected algebras, representation-directed algebras can be classified as those whose Auslander-Reiten quiver consists of a unique preprojective component. However, brick-directed algebras may or may not admit a preprojective component. For instance, the Kronecker algebra is brick-directed (see Prop. \ref{Prop: Hereditary brick-directed algs}) and it is known to admit a (unique infinite) preprojective component. In contrast, the wind-wheel algebras in Example \ref{Example: windwheel alg of rank 2}, which are shown to be brick-directed, do not admit any preprojective components. This is because they are representation-infinite but brick-finite (also see \cite{Mo1}). Also, Example \ref{Example: Non-trivial brick-splitting torsion pairs} gives an explicit example of a connected representation-finite algebra which is brick-directed without a preprojective component.
\end{remark}

Now we prove Corollary \ref{Cor on various families of brick-directed algebras}, which we also state below. Consequently, we obtain an abundance of brick-directed algebras of different types.

\begin{corollary}\label{Various families of brick-directed algebras}
For any integer $n>1$, there exists a connected brick-directed algebra $\Lambda$ of rank $n$ of any of the following types:
\begin{itemize}
    \item[(I)] representation-finite algebra;
    \item[(II)] brick-finite tame algebra;
    \item[(III)] brick-infinite tame algebra;
    \item[(IV)] brick-finite wild algebra;
    \item[(V)] brick-infinite wild algebra.
\end{itemize}
\end{corollary}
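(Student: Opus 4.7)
The plan is to construct, for each $n>1$ and each of the five representation types, an explicit brick-directed algebra of rank $n$, built from the basic pieces introduced in Section \ref{Subsection: Construction of new brick-directed algebras}---the local algebras $L_m$, the wind-wheel algebras $\Lambda_m$, the Kronecker algebra, and the linearly oriented Dynkin quivers of type $A$---via the gluing construction, transferring brick-directedness at each step through Proposition \ref{Prop:on gluing}(2).

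For type (I), I take the path algebra of the linearly oriented $A_n$ quiver, which is representation-finite and representation-directed, hence brick-directed. For type (II), I take the rank-$n$ wind-wheel algebra $\Lambda_n$ from Example \ref{Example: windwheel alg of rank 2}: the string-combinatorial analysis carried out there for $n=2$ generalizes to arbitrary $n$ and shows that $\Lambda_n$ is brick-finite, domestic tame, and brick-directed (its brick quiver being acyclic). For type (III), I glue the Kronecker algebra $K$ (rank $2$, tame, brick-infinite, brick-directed by Proposition \ref{Prop: Hereditary brick-directed algs}) with the linearly oriented $A_{n-1}$ at the sink of $K$ and the source of $A_{n-1}$; the case $n=2$ collapses to $K$ itself. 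By Proposition \ref{Prop:on gluing}(1) every indecomposable of the glued algebra lies entirely over $K$ or entirely over $A_{n-1}$, so the glued algebra inherits the infinitely many bricks of $K$ (giving brick-infiniteness), has only $1$-parameter families of indecomposables (giving tameness), and is brick-directed by Proposition \ref{Prop:on gluing}(2).

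For type (IV), I glue the linearly oriented $A_n$ with $L_3$ at the sink of $A_n$ and the unique vertex of $L_3$. By Proposition \ref{Prop:on gluing}(1) the bricks of the glued algebra are exactly the bricks of $A_n$ together with the unique brick of $L_3$, so it is brick-finite; it is wild because $L_3$ arises as a quotient algebra (by annihilating all idempotents except the identified vertex), and it is brick-directed by Proposition \ref{Prop:on gluing}(2). For type (V), I iterate: first glue $K$ with $A_{n-1}$ as in (III), then glue that algebra with $L_3$ at the resulting sink. The final algebra has rank $n$ (since gluing with the rank-$1$ algebra $L_3$ does not change the rank), admits both $K$ and $L_3$ as quotients (yielding brick-infiniteness and wildness respectively), and is brick-directed by two applications of Proposition \ref{Prop:on gluing}(2).

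The main obstacle I anticipate is formal rather than substantive: the gluing recipe in Section \ref{Subsection: Construction of new brick-directed algebras} requires the identified vertex $y_1$ of the right-hand algebra to be a source, whereas the unique vertex of $L_3$ carries loops and hence is not a source in the strict sense. I would address this by observing that the construction of the glued quiver with its relations and the proof of Proposition \ref{Prop:on gluing} go through unchanged in this setting: the relations $R_v$ still kill every length-$2$ path through the identified vertex $v$---loops included---and the arguments that every indecomposable of the glued algebra is supported on one side and that the brick quiver of the glued algebra is the gluing of the two brick quivers at $S_v$ remain valid verbatim.
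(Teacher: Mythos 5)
Your constructions match the paper's: Dynkin for (I), the wind-wheel $\Lambda_n$ for (II), Kronecker glued to $A_{n-1}$ for (III), and for (IV)/(V) a brick-directed algebra glued at its sink to the rank-one wild local algebra $L_3$. The flaw is in how you handle the loop issue you yourself raise. You claim that the proof of Proposition~\ref{Prop:on gluing} ``goes through unchanged'' when the identified vertex $y_1$ carries loops, and in particular that ``every indecomposable of the glued algebra is supported on one side.'' This is false. Take $Q=A_n$ glued to $L_3$ at the sink $n$, with relations $\alpha_i\alpha_j=\alpha_i\beta_{n-1}=0$. Define $M$ by $M_{n-1}=k$, $M_n=k^2$, $M_x=0$ otherwise, with $\beta_{n-1}(c)=(c,0)$ and $\alpha_1(a,b)=(b,0)$, $\alpha_2=\alpha_3=0$. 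The relations are satisfied, $\End_\Lambda(M)\cong k[t]/(t^2)$ is local, so $M$ is indecomposable; yet $M$ uses both $\beta_{n-1}$ and $\alpha_1$ non-trivially, so it is supported over neither $kQ/I$ nor $L_3$. Proposition~\ref{Prop:on gluing}(1), and with it the proof you are invoking for part (2), genuinely relies on $y_1$ having no incoming arrow (hence no loop) and cannot be applied here.

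The correct way to close the gap — which is what the paper does — is to argue directly about \emph{bricks} rather than all indecomposables. If $X$ is a $\Lambda$-module with $X(\alpha_i)\ne0$ for some $i$, then (using $\alpha_j\alpha_i=0$ and $\alpha_i\beta_{n-1}=0$) the image of $\alpha_i$ lies in $\soc(X)$ and $\alpha_i$ factors through $\topm(X)_n$, so $S_n$ is a direct summand of both the top and the socle of $X$; the composite $X\twoheadrightarrow S_n\hookrightarrow X$ is then a non-zero, non-invertible endomorphism and $X$ is not a brick. Hence every brick of $\Lambda$ satisfies $X(\alpha_i)=0$ for all $i$, i.e. is a brick over the left-hand algebra $B$, and $\brick\Lambda=\brick B$ follows. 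This gives brick-finiteness (resp. brick-infiniteness) and brick-directedness of $\Lambda$ from the same properties of $B$, without ever applying Proposition~\ref{Prop:on gluing} across the loop vertex. Your constructions are fine; the missing ingredient is this extra lemma about bricks replacing your (incorrect) appeal to the unchanged Proposition~\ref{Prop:on gluing}.
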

\begin{proof}
Let $n>1$ be a fixed integer. For every case in the assertion, we give an explicit algebra $\Lambda$ of rank $n$ which satisfies the desired properties.

For (I), let $Q$ denote a Dynkin quiver of rank $n$. Note that $kQ$ is representation-directed, hence it is a representation-finite and brick-directed algebra. 

For (II), consider the algebra $\Lambda_n$ given in Example \ref{Example: windwheel alg of rank 2}.

For (III), first recall that for the Kronecker quiver $Q: 
1\doublerightarrow{\,}{\,}2$, the path algebra $A=kQ$ is tame and brick-directed (see Proposition \ref{Prop: Hereditary brick-directed algs}). We also consider the equi-oriented quiver $Q': 1\xrightarrow{\beta_1} 2\xrightarrow{\beta_2} \cdots n-2 \xrightarrow{\beta_{n-2}} n-1$ and let $B=kQ'$. Note that $B$ is a (representation-finite) brick-directed algebra. By Proposition \ref{Prop:on gluing}, via gluing construction that identifies vertex $2$ in $Q$ with vertex $1$ in $Q'$, we obtain the tame brick-directed algebra $\Lambda=k\Tilde{Q}/\Tilde{I}$, which is of rank $n$. Since $A$ is a proper quotient of $\Lambda$, then $\Lambda$ is obviously brick-infinite.

For (IV), let $A$ be the wild local algebra $k[x_1,x_2, x_3]/\langle x_ix_j : 1\leq i\leq j \leq 3 \rangle$, whose bound quiver $(Q,I)$ consists of a single vertex and $3$ loops, say $\alpha_1, \alpha_2$ and $\alpha_3$, subject to all quadratic relations. Since $\brick A$ consists only of the unique simple $A$-module, $A$ is evidently brick-finite and brick-directed. Moreover, let $Q'$ be the equi-oriented Dynkin quiver $1\xrightarrow{\beta_1} 2\xrightarrow{\beta_2} \cdots n-1 \xrightarrow{\beta_{n-1}} n$. As remarked previously, $B=kQ'$ is a (representation-finite) brick-directed algebra. 
Now, we construct the desired algebra $\Lambda=k\Tilde{Q}/\Tilde{I}$ from $A$ and $B$, similar to the gluing construction described before Proposition \ref{Prop:on gluing}. More precisely, identify vertex $n$ in $Q'$ with the unique vertex of $Q$ to obtain the new quiver $\Tilde{Q}$, and let $\Tilde{I}:=\langle \alpha_i\alpha_j, \alpha_i\beta_{n-1}: 1\leq i\leq j \leq 3 \rangle$. 
Observe that $A$ is a proper quotient of $\Lambda$, hence $\Lambda$ is wild. Observe that if $X$ is a brick over $\Lambda$, then either $X(\beta_{n-1})=0$ or $X(\alpha_{i})=0$ for all $1 \le i \le 3$. Otherwise, the simple module $S_n$ appears both in the socle and the top of $X$, so $X$ cannot be a brick. Consequently, $\brick \Lambda=\brick B$, so $\Lambda$ is brick-finite and brick-directed. 

For (V), the construction of the algebra $\Lambda$ is similar to the previous case. In particular, let $A$ be the wild local algebra given in part $(IV)$. However, here we take $Q'$ as the equi-oriented quiver $1\doublerightarrow{\delta_1}{\delta_2}{\,}{\,}2 \xrightarrow{\beta_2} \cdots \xrightarrow{\beta_{n-2}} n-1 \xrightarrow{\beta_{n-1}} n$, and define the algebra $B:=kQ'/I'$, where $I':=\langle \beta_2\delta_1, \beta_2\delta \rangle$. We observe that, by Proposition \ref{Prop:on gluing}, $B$ is a tame brick-infinite brick-directed algebra. 
Now, we construct the desired algebra $\Lambda=k\Tilde{Q}/\Tilde{I}$ from $A$ and $B$, similar to the previous case. Namely, we first identify vertex $n$ in $Q'$ with the unique vertex of $Q$ to obtain the new quiver $\Tilde{Q}$. Then, consider the algebra $\Lambda=k\Tilde{Q}/\Tilde{I}$, where $\Tilde{I}:=\langle \beta_2\delta_1, \beta_2\delta_2, \alpha_i\alpha_j, \alpha_i\beta_{n-1}: 1\leq i\leq j \leq 3 \rangle$. 
Since $A$ and $B$ are proper quotients of $\Lambda$, we evidently have that $\Lambda$ is wild and brick-infinite. Furthermore, as observed in the previous case, we have $\brick \Lambda=\brick B$, which immediately implies that $\Lambda$ is brick-directed.
\end{proof}

\subsection{Wall-chamber structure of brick-directed algebras}\label{Subsection: Wall-chamber structure of brick-directed algebras} 
Thanks to the rich connections between the study of bricks and the stability conditions, we can see some important properties of the brick-directed algebras through their $g$-vector fans. In particular, as the main result of this subsection, for brick-finite algebras we give an interesting characterization of brick-directed algebras in terms of their wall-and-chamber structures. For all the background, motivations, and undefined terminology on the $g$-vector fans and connections to stability conditions, we refer to \cite{As-wc, AH+, BKT,BST} and the references therein.

\medskip

As before, let $A$ be an algebra of rank $n$, and consider the real Grothendieck group $K_0(\proj A)_\mathbb{R}=\bigoplus_{i=1}^n \mathbb{R}[P_i]$. 
We consider the \emph{Euler bilinear form} 
$$K_0(\proj A)_\mathbb{R} \times K_0(\modu A)_\mathbb{R} \to \mathbb{R}; \quad
([P_i],[S_j]) \mapsto \delta_{i,j} \dim_k \End_A(S_j).$$
Via this bilinear form, each $\theta \in K_0(\proj A)_\mathbb{R}$ induces an $\mathbb{R}$-linear form $K_0(\modu A)_\mathbb{R} \to \mathbb{R}$.
As in \cite{Ki}, for $\theta \in K_0(\proj A)_\mathbb{R}$, a module $M \in \modu A$ is said to be \emph{$\theta$-semistable} if $\theta([M])=0$ and $\theta([N]) \ge 0$ for every quotient module $N$ of $M$. If a $\theta$-semistable module $M$ moreover satisfies $\theta([N])>0$ for every non-zero proper quotient module $N$ of $M$, then $M$ is said to be \emph{$\theta$-stable}. Following \cite{BST}, for each module $M \in \modu A$, we define
$$\Theta_M := \{\theta \in K_0(\proj A)_\mathbb{R} \mid \text{$M$ is $\theta$-semistable}\}.$$
Recall that a \emph{wall} in $K_0(\proj A)_\mathbb{R}\simeq \mathbb{R}^n$ is a set of the form $\Theta_X$, where $X \in \brick A$ and $\Theta_X$ has co-dimension one in $\mathbb{R}^n$. 
Every wall $\Theta_X$ has a unique supporting hyperplane defined as $\mathcal{H}_X := \{\theta \in K_0(\proj A)_\mathbb{R} \mid \theta([X])=0\}$. Observe that the converse is not true in general, that is, two distinct walls may have the same supporting hyperplane. 

For each $\theta \in K_0(\proj A)_\mathbb{R}$, \cite{BKT} associated two torsion pairs $(\overline{\mathcal{T}}_\theta,\mathcal{F}_\theta)$ and $(\mathcal{T}_\theta,\overline{\mathcal{F}}_\theta)$ in $\modu A$ by
\begin{align*}
\overline{\mathcal{T}}_\theta&:=\{M \in \modu A \mid \text{$\theta([N]) \ge 0$ for every quotient module $N$ of $M$}\},\\
\mathcal{F}_\theta&:=\{M \in \modu A \mid \text{$\theta([L]) <0$ for every submodule $L \ne 0$ of $M$}\},\\
\mathcal{T}_\theta&:=\{M \in \modu A \mid \text{$\theta([N]) >0$ for every quotient module $N \ne 0$ of $M$}\}, \\
\overline{\mathcal{F}}_\theta&:=\{M \in \modu A \mid \text{$\theta([L]) \ge 0$ for every submodule $L$ of $M$}\}.
\end{align*}
We always have $\mathcal{T}_\theta \subseteq \overline{\mathcal{T}}_\theta$ and $\mathcal{F}_\theta \subseteq \overline{\mathcal{F}}_\theta$. The difference between $(\overline{\mathcal{T}}_\theta,\mathcal{F}_\theta)$ and $(\mathcal{T}_\theta,\overline{\mathcal{F}}_\theta)$ are expressed by the subcategory $\mathcal{W}_\theta:=\overline{\mathcal{T}}_\theta \cap \overline{\mathcal{F}}_\theta$, which is the subcategory consisting of all $\theta$-semistable modules. The subcategory $\mathcal{W}_\theta$ is known to be a wide subcategory of $\modu A$ whose simple objects are exactly the $\theta$-stable modules. The following two results on brick-finite algebras are known and play important roles in our proofs below. For the details, we refer to \cite{As-smc}. 

 \begin{prop}\label{Prop: functorially finite torsion pairs are semistable}
     Let $A$ be brick-finite. If $\mathcal{T} \to \mathcal{U}$ is an arrow in $\Hasse(\tors A)$ labeled by a brick $X$, then there exists $\theta \in K_0(\proj A)_\mathbb{R}$ such that $\overline{\mathcal{T}}_\theta=\mathcal{T}$, $\mathcal{T}_\theta=\mathcal{U}$ and that $X$ is $\theta$-stable.
 \end{prop}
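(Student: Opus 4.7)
The plan is to exploit the $g$-vector fan of the brick-finite algebra $A$ and the dictionary between its walls and the labeling bricks of $\Hasse(\tors A)$. By $\tau$-tilting finiteness, every torsion class in $\modu A$ is functorially finite, so $\mathcal{T}$ and $\mathcal{U}$ correspond to unique (basic) support $\tau$-tilting pairs $(M_\mathcal{T},P_\mathcal{T})$ and $(M_\mathcal{U},P_\mathcal{U})$ in the sense of Adachi--Iyama--Reiten. The cover relation $\mathcal{U}\lessdot\mathcal{T}$ corresponds to a single mutation between these pairs; consequently the two $g$-vector cones $C(M_\mathcal{T},P_\mathcal{T})$ and $C(M_\mathcal{U},P_\mathcal{U})$ share a common codimension-one face $F$, contained in the hyperplane $\mathcal{H}_X=\{\theta\mid \theta([X])=0\}$ associated to the labeling brick $X$.

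The key step is to pick $\theta$ in the relative interior of $F$ and identify the resulting torsion pairs. By Bridgeland's theorem, for any $\theta'$ in the interior of the cone $C(M_\mathcal{T},P_\mathcal{T})$ we have $\overline{\mathcal{T}}_{\theta'}=\mathcal{T}_{\theta'}=\mathcal{T}$; a continuity/limit argument as $\theta'$ approaches $F$ from the interior of $C(M_\mathcal{T},P_\mathcal{T})$ yields $\overline{\mathcal{T}}_\theta=\mathcal{T}$. Symmetrically, approaching $F$ from the interior of $C(M_\mathcal{U},P_\mathcal{U})$ gives $\mathcal{T}_\theta=\mathcal{U}$. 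Both equalities depend on $\theta$ lying in the relative interior of $F$ rather than on a lower-dimensional face (so that no further wall is crossed), which is why genericity on $F$ is needed.

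It remains to show $X$ is $\theta$-stable. By the above, $X\in\mathcal{T}\cap\mathcal{U}^\perp=\overline{\mathcal{T}}_\theta\cap\overline{\mathcal{F}}_\theta=\mathcal{W}_\theta$, so $X$ is $\theta$-semistable. The simple objects of the wide subcategory $\mathcal{W}_\theta$ are precisely the $\theta$-stable modules; on the other hand, by the Demonet--Iyama--Jasso characterization of brick labels (Theorem~\ref{Thm: bijection between Join-irr and brick-labels}), $\brick[\mathcal{U},\mathcal{T}]=\{X\}$, so $X$ is the unique brick in $\mathcal{W}_\theta$ and hence is simple in $\mathcal{W}_\theta$. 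Therefore $X$ is $\theta$-stable.

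The main obstacle is the careful identification of $\overline{\mathcal{T}}_\theta$ and $\mathcal{T}_\theta$ for $\theta$ in the relative interior of the common face $F$; this requires knowing that the only wall crossed at $F$ is the one whose supporting hyperplane is $\mathcal{H}_X$, which in turn uses the fact that, in the $\tau$-tilting finite setting, the $g$-vector fan is complete and its codimension-one cones are exactly the walls. Once this is in hand, the rest is bookkeeping via the wide subcategory $\mathcal{W}_\theta$ and the label bijection.
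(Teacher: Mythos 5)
The paper does not actually prove this proposition: it states it as a known result and cites \cite{As-smc} (Asai's ``Mutations of simple-minded collections and $\tau$-tilting reduction'', listed as in preparation), so there is no in-paper proof to compare against. Your argument is the natural one via the $g$-vector fan, and in outline it is correct, but two points deserve tightening.

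First, the attribution ``Bridgeland's theorem'' for the fact that $\overline{\mathcal{T}}_{\theta'}=\mathcal{T}_{\theta'}=\mathcal{T}$ on the interior of $C(M_\mathcal{T},P_\mathcal{T})$ is off; this is the Yurikusa/Asai/Br\"ustle--Smith--Treffinger identification of chambers with torsion classes in the $\tau$-tilting finite case, not Bridgeland.

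Second, and more substantively, the ``continuity/limit argument'' as stated does not give the equality $\overline{\mathcal{T}}_\theta=\mathcal{T}$. Membership in $\overline{\mathcal{T}}_\theta$ is a \emph{closed} condition in $\theta$ (finitely many inequalities $\theta([N])\ge 0$), so passing to the limit from inside $C(M_\mathcal{T},P_\mathcal{T})$ only yields $\mathcal{T}\subseteq\overline{\mathcal{T}}_\theta$; the reverse inclusion does not follow from continuity. Symmetrically, membership in $\mathcal{T}_\theta$ is an \emph{open} condition, so approaching from inside $C(M_\mathcal{U},P_\mathcal{U})$ cannot give $\mathcal{U}\subseteq\mathcal{T}_\theta$ directly; what you actually get (by applying the closed-condition argument to $\overline{\mathcal{F}}_{\theta'}=\mathcal{U}^\perp$ on that side and taking perpendiculars) is $\mathcal{T}_\theta\subseteq\mathcal{U}$. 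At this stage you only have the sandwich $\mathcal{T}_\theta\subseteq\mathcal{U}\lessdot\mathcal{T}\subseteq\overline{\mathcal{T}}_\theta$. To conclude equality you need the genericity input you allude to, and it has to be used before you can compute $\mathcal{W}_\theta$: since $\theta$ is in the relative interior of the codimension-one face $F$, Lemma~\ref{Lemma: relative-interior of walls}(3) guarantees $\theta$ lies on exactly one wall, so $\mathcal{W}_\theta$ has a unique simple object and hence the interval $[\mathcal{T}_\theta,\overline{\mathcal{T}}_\theta]$ is a cover relation (or trivial); combined with the strict inclusion $\mathcal{U}\subsetneq\mathcal{T}$ this forces $\mathcal{T}_\theta=\mathcal{U}$ and $\overline{\mathcal{T}}_\theta=\mathcal{T}$. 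Once that is in place, your final paragraph — $X\in\mathcal{T}\cap\mathcal{U}^\perp=\mathcal{W}_\theta$, so $X$ is the unique brick in $\mathcal{W}_\theta$, hence simple there, hence $\theta$-stable — is correct and clean. So the route is right, but the identification step needs the sandwich-plus-cover argument rather than a pure limit argument.
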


 The following lemma collects some key properties of the wall and chamber structure of a brick-finite algebra.

\begin{lemma} \label{Lemma: relative-interior of walls}
    Let $A$ be brick-finite. 
    \begin{enumerate}
    \item 
    For each $X \in \brick A$, the set $\Theta_X$ is of co-dimension one; that is, $\Theta_X$ is a wall.
    \item 
    Let $\theta \in K_0(\proj A)_\mathbb{R}$. Then $\mathcal{W}_\theta$ has only finitely many simple objects $X_1,\ldots,X_m$ up to isomorphisms, and their dimension vectors in $K_0(\modu A)$ are extendable to a $\mathbb{Z}$-basis of $K_0(\modu A)$.
    \item
    If $X$ and $Y$ are non-isomorphic bricks, then the co-dimension of $\Theta_X \cap \Theta_Y$ in $K_0(\proj A)_\mathbb{R}$ is at least two.
    \end{enumerate}
\end{lemma}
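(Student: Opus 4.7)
For part (1), the key is brick-finiteness: every $X \in \brick A$ labels some Hasse arrow in $\tors A$, so Proposition \ref{Prop: functorially finite torsion pairs are semistable} produces a $\theta_0 \in \mathcal{H}_X$ at which $X$ is $\theta_0$-stable. Since $X$ has only finitely many proper non-zero quotients $N$, the $\theta$-stability of $X$ is cut out in $\mathcal{H}_X$ by the finite collection of strict linear inequalities $\theta([N]) > 0$, which define a (relatively) open condition. Hence an entire $(n-1)$-dimensional neighborhood of $\theta_0$ in $\mathcal{H}_X$ lies in $\Theta_X$, so $\Theta_X$ has co-dimension one in $K_0(\proj A)_{\mathbb{R}}$.

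For part (2), it is already stated in the preceding text that $\mathcal{W}_\theta$ is a wide subcategory whose simple objects are precisely the $\theta$-stable modules, each of which is a brick. Brick-finiteness therefore forces finitely many simples $X_1,\ldots,X_m$. For the $\mathbb{Z}$-basis assertion, my plan is to invoke $\tau$-tilting reduction: by \cite{Ja} (see also \cite[Theorem~4.12]{DI+}), $\mathcal{W}_\theta$ is equivalent to $\modu B$ for some $\tau$-reduction $B$ of $A$, realized through a $2$-term presilting object whose $g$-vectors form part of a $\mathbb{Z}$-basis of $K_0(\proj A)$. Dualizing under the Euler form, the dimension vectors $[X_1],\ldots,[X_m]$ of the simples of $\mathcal{W}_\theta$ extend to a $\mathbb{Z}$-basis of $K_0(\modu A)$, as required.

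For part (3), I would argue by contradiction, assuming $\Theta_X \cap \Theta_Y$ has co-dimension at most one. Since $\Theta_X \cap \Theta_Y \subseteq \mathcal{H}_X \cap \mathcal{H}_Y$, if $[X]$ and $[Y]$ were linearly independent the right-hand side would have co-dimension two, contradicting our assumption. So $[X]$ and $[Y]$ are proportional, whence $\mathcal{H}_X = \mathcal{H}_Y$, and $\Theta_X \cap \Theta_Y$ contains a relatively open subset $U$ of this common hyperplane. Inside each of $\Theta_X$ and $\Theta_Y$ the $\theta$-stable locus is open and dense (it is the complement of the proper faces of the wall, which are cut out by further linear conditions coming from composition factors of $X$ or $Y$ in $\mathcal{W}_\theta$), so these two dense opens intersect $U$ in dense open subsets of $U$ and therefore meet at some $\theta \in U$ making both $X$ and $Y$ simple objects of $\mathcal{W}_\theta$. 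But part~(2) then yields that $[X]$ and $[Y]$ are linearly independent, contradicting $\mathcal{H}_X = \mathcal{H}_Y$.

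The main obstacle is the density step in part~(3): one must know that within each wall $\Theta_X$ the stable locus for $X$ is dense, and that after restricting to the codim-$1$ intersection the two stable loci still meet. This is where the brick-finite hypothesis is doing real work, via the local finiteness of the scattering structure; formalizing it cleanly will likely require combining part (1) with the explicit description of the closed faces of $\Theta_X$ arising from the wide subcategory $\mathcal{W}_\theta$ as $\theta$ varies in $\Theta_X$.
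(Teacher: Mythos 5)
The paper does not prove this lemma. Immediately before stating it and Proposition~\ref{Prop: functorially finite torsion pairs are semistable}, the text says ``The following two results on brick-finite algebras are known and play important roles in our proofs below. For the details, we refer to \cite{As-smc}.'' The proof is thus deferred entirely to that reference, so there is no in-paper argument to compare against; I can only assess your outline on its own terms.

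Your part~(1) has a sloppy sentence: over an infinite field a module $X$ can have infinitely many proper non-zero quotients, so ``$X$ has only finitely many proper non-zero quotients $N$'' is false. What the argument actually needs is that only finitely many \emph{dimension vectors} $[N]\in K_0(\modu A)$ arise from quotients of $X$ (because $\dim_k X<\infty$), so the stability condition in $\mathcal{H}_X$ is cut out by finitely many strict linear inequalities. With that correction the argument stands. Your part~(2) has a genuine gap: the finiteness of simples is immediate, but the $\mathbb{Z}$-basis claim needs a precise mechanism, and ``dualizing under the Euler form'' does not by itself transfer a partial $\mathbb{Z}$-basis of $K_0(\proj A)$ to the specific vectors $[X_1],\ldots,[X_m]$ in $K_0(\modu A)$. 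The argument needs to place the simples $X_i$ inside a $2$-term simple-minded collection $\mathcal{B}\cup\mathcal{C}[1]$ attached to the functorially finite torsion pair $(\overline{\mathcal{T}}_\theta,\mathcal{F}_\theta)$ via Asai \cite{As}, since those are the objects whose classes are \emph{known} to form a $\mathbb{Z}$-basis; as written you assert but do not exhibit the connection.

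Your part~(3) is structurally correct, and the step you flag as the ``main obstacle'' can in fact be eliminated. Once $[X]$ and $[Y]$ are proportional and $\mathcal{H}_X=\mathcal{H}_Y$, the codimension hypothesis forces $\Theta_X\cap\Theta_Y$ (a full-dimensional polyhedral cone in $\mathcal{H}_X$) to contain a non-empty relatively open $U\subseteq\mathcal{H}_X$. There is then no need for a density argument: using the finitely-many-dimension-vectors observation from part~(1), the $\theta$-stable locus for $X$ is \emph{exactly} the relative interior of $\Theta_X$ in $\mathcal{H}_X$ (a non-negative linear form that is $0$ at a relative interior point would vanish on all of $\mathcal{H}_X$, which would force the stable locus to be empty, contradicting part~(1)). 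Hence any $\theta\in U$ makes both $X$ and $Y$ stable, so both appear among the simples of $\mathcal{W}_\theta$, and part~(2) gives the desired linear independence contradicting $\mathcal{H}_X=\mathcal{H}_Y$.
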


\begin{remark}
Note that the assumption that $A$ is brick-finite is crucial in Lemma \ref{Lemma: relative-interior of walls}. To stress this observation, we provide an example showing that $(3)$ does not hold, in general, for a brick-infinite algebra. Consider the string algebra $A=kQ/I$, where $Q$ is given by
    $$\xymatrix{& 2 \ar@/_/[dr]_\beta & \\
1 \ar@/_/[rr]_\gamma \ar@/^/[ur]^\alpha & & 3 \ar@/_/[ul]_{\beta^*}}$$
and $I = \langle \beta\beta^*, \beta^*\beta\rangle$. In particular, $A$ is a finite-dimensional quotient algebra of the preprojective algebra of type $\tilde{\mathbb{A}}_2$. Consider the uniserial string modules $X=\begin{smallmatrix}1\\2\\3\end{smallmatrix}$ and $Y=\begin{smallmatrix}1\\3\\2\end{smallmatrix}$. Observe that if $\theta = a[P_1] + b[P_2]+c[P_3]$ with $a > 0$, $b,c < 0$ and $a+b+c=0$, then we see from the definition that both $X,Y$ are $\theta$-stable. Hence, we see that the codimension of $\Theta_X \cap \Theta_Y$ in $K_0(\proj A)_\mathbb{R}$ is one. Observe that $X$ is $([P_1] - [P_3])$-stable but not $([P_1]-[P_2])$-stable, while $Y$ is $([P_1]-[P_2])$-stable but not $([P_1]-[P_3])$-stable, showing that $\Theta_X \ne \Theta_Y$. 
\end{remark}

\medskip

In order to state the main result of this part more succinctly, we introduce a new terminology. In particular, for a totally ordered set $J$, a sequence $\{\theta_j\}_{j\in J}$ of elements of $K_0(\proj A)_{\mathbb{R}}$ is called \emph{consistent} (respectively, \emph{weakly consistent}) if there exists an ordering $\brick A=\{X_j\}_{j\in J}$ on $\brick A$ such that

 \begin{enumerate}
     \item For $i < j$, we have $\theta_i(X_j) < 0$;
     \item For $i > j$, we have $\theta_i(X_j) > 0$ (respectively, $\theta_i(X_j) \geq 0$);
     \item The brick $X_i$ is $\theta_i$-stable (in particular, $\theta_i(X_i) = 0$).
 \end{enumerate}

\begin{remark}
   Let $\{\theta_j\}_{j\in J}$ be a consistent sequence as above. For each $j$, consider the wall $\Theta_{X_j}$. For each $\Theta_{X_j}$, we can find a $g$-vector $\theta_j$ in the relative-interior of the wall $\Theta_{X_j}$ such that the negative side of the hyperplane $\mathcal{H}_{X_j}$ contains all $\theta_i$ with $i < j$, and its positive side contains all $\theta_i$ with $i > j$. 
\end{remark}
 
Now, we show the main result of this subsection. 

 \begin{theorem}\label{Prop: walls ordered consistently}
     Let $A$ be brick-finite. Then $A$ is brick-directed if and only if $K_0(\proj A)_{\mathbb{R}}$ admits a consistent sequence if and only if $K_0(\proj A)_{\mathbb{R}}$ admits a weakly consistent sequence.    
 \end{theorem}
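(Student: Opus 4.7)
The plan is to prove the equivalences via the cyclic chain $(1) \Rightarrow (2) \Rightarrow (3) \Rightarrow (1)$, with the last implication being the main obstacle.

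For $(1) \Rightarrow (2)$, I start from brick-directedness and apply Theorem~\ref{Thm: Brick-directed & maximal chain of brick-splitting pairs} to produce a total order $\brick A = \{X_k\}_{k=1}^N$ with $\Hom_A(X_i, X_j) = 0$ for $i < j$, together with a maximal chain $0 = \mathcal{U}_0 \subsetneq \mathcal{U}_1 \subsetneq \cdots \subsetneq \mathcal{U}_N = \modu A$ of brick-splitting torsion classes, where $\mathcal{U}_k$ is the smallest torsion class containing $\{X_1, \ldots, X_k\}$ and the cover $\mathcal{U}_k \to \mathcal{U}_{k-1}$ is labeled by $X_k$. Proposition~\ref{Prop: functorially finite torsion pairs are semistable} then furnishes $\theta_k \in K_0(\proj A)_\mathbb{R}$ with $\overline{\mathcal{T}}_{\theta_k} = \mathcal{U}_k$, $\mathcal{T}_{\theta_k} = \mathcal{U}_{k-1}$, and $X_k$ being $\theta_k$-stable. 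For $i < j$, Hom-orthogonality places $X_j \in \mathcal{U}_i^\perp = \mathcal{F}_{\theta_i}$, yielding $\theta_i(X_j) < 0$; for $i > j$, $X_j \in \mathcal{U}_{i-1} = \mathcal{T}_{\theta_i}$ forces $\theta_i(X_j) > 0$. The implication $(2) \Rightarrow (3)$ is immediate from the definitions.

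For $(3) \Rightarrow (1)$, let $\{\theta_k\}$ be weakly consistent with ordering $\{X_k\}_{k=1}^N$. By Theorem~\ref{Thm: Brick-directed & maximal chain of brick-splitting pairs}, it suffices to show $\Hom_A(X_i, X_j) = 0$ for all $i < j$. Assume a non-zero $f\colon X_i \to X_j$ with $i < j$ and let $I := \Image(f)$. The map $f$ cannot be injective: otherwise $I \cong X_i$ is a proper non-zero submodule of the $\theta_j$-stable $X_j$, forcing $\theta_j(X_i) = \theta_j(I) < 0$, contradicting $\theta_j(X_i) \geq 0$. Similarly $f$ cannot be surjective, since then $\theta_i(X_j) = \theta_i(I) > 0$ by $\theta_i$-stability, contradicting $\theta_i(X_j) < 0$. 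So $I$ is proper both as a quotient of $X_i$ and as a submodule of $X_j$, giving $\theta_i(I) > 0$ and $\theta_j(I) < 0$.

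To close the argument, I analyze the canonical decomposition $0 \to M \to X_i \to F \to 0$ of $X_i$ under the torsion pair $(\overline{\mathcal{T}}_{\theta_j}, \mathcal{F}_{\theta_j})$. If $F = 0$, then $I$ is a quotient of $X_i \in \overline{\mathcal{T}}_{\theta_j}$, giving $\theta_j(I) \geq 0$ and a contradiction; if $M = 0$, then $X_i \in \mathcal{F}_{\theta_j}$ gives $\theta_j(X_i) < 0$, violating the weak bound. The hardest part of the proof is the residual case $0 \neq M \subsetneq X_i$. Here $f(M)$ is a quotient of $M$, so it lies in $\overline{\mathcal{T}}_{\theta_j}$, and as a submodule of the $\theta_j$-stable $X_j$ it must be $0$ or $X_j$; non-surjectivity forces $f(M) = 0$, so $f$ factors through $F$ and $I \in \mathcal{F}_{\theta_j}$. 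Performing the symmetric analysis of $X_j$ with respect to the dual torsion pair $(\mathcal{T}_{\theta_i}, \overline{\mathcal{F}}_{\theta_i})$ rules out the extreme cases for $X_j$ and forces a non-trivial decomposition $0 \to M' \to X_j \to F' \to 0$. Combining $\Hom(\mathcal{T}_{\theta_i}, \overline{\mathcal{F}}_{\theta_i}) = 0$ with $I \in \mathcal{T}_{\theta_i}$ (from $I$ being a proper non-zero quotient of the $\theta_i$-stable $X_i$) places $I \subseteq M'$, and the final contradiction is obtained by exploiting $I \in \mathcal{T}_{\theta_i} \cap \mathcal{F}_{\theta_j}$ together with the generic position of walls guaranteed by Lemma~\ref{Lemma: relative-interior of walls}(3).
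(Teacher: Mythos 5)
Your argument for $(1)\Rightarrow(2)\Rightarrow(3)$ is correct and follows the same route as the paper: Theorem~\ref{Thm: Brick-directed & maximal chain of brick-splitting pairs} plus Proposition~\ref{Prop: functorially finite torsion pairs are semistable} produce the $\theta_i$, and the sign checks on $\theta_i(X_j)$ go through as the paper has them.

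The direction $(3)\Rightarrow(1)$, however, has a genuine gap at the very end. After correctly ruling out injectivity and surjectivity of $f$ and performing the two canonical-filtration analyses, you arrive at $I=\Image(f)$ with $I\in\mathcal{T}_{\theta_i}\cap\mathcal{F}_{\theta_j}$, $\theta_i(I)>0$ and $\theta_j(I)<0$. These facts are compatible with each other and do not self-contradict: you have one inequality from $\theta_i$ and one from $\theta_j$, pointing in opposite directions, and there is no tension. The appeal to Lemma~\ref{Lemma: relative-interior of walls}(3) cannot rescue this: that lemma concerns walls $\Theta_X\cap\Theta_Y$ for \emph{bricks} $X,Y$, whereas $I$ is merely a subquotient and, since $\theta_i(I)>0$ and $\theta_j(I)<0$, the module $I$ is neither $\theta_i$- nor $\theta_j$-semistable, so no wall containing either $\theta_i$ or $\theta_j$ is attached to $I$. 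The underlying difficulty is that your argument only invokes the two stability parameters $\theta_i$ and $\theta_j$, which do not control the class $[I]\in K_0(\modu A)$ well enough. The paper closes this direction by a global induction over \emph{all} the $\theta_k$, establishing that $\overline{\mathcal{T}}_{\theta_k}=\Filt(X_1\oplus\cdots\oplus X_k)$ for every $k$; once this is known one gets a genuine chain $0\subsetneq\overline{\mathcal{T}}_{\theta_1}\subsetneq\cdots\subsetneq\overline{\mathcal{T}}_{\theta_r}=\modu A$ of length $|\brick A|$, which yields brick-directedness, and it also gives $X_i\in\overline{\mathcal{T}}_{\theta_j}$ (so $\Hom_A(X_i,\mathcal{F}_{\theta_j})=0$, killing your $f$) — but that conclusion is not available from $\theta_i,\theta_j$ alone. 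To repair your argument you would need to bring in the remaining $\theta_k$ and an inductive control of the torsion classes $\overline{\mathcal{T}}_{\theta_k}$, at which point you are essentially reproducing the paper's proof.
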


 \begin{proof}
     We prove that if $A$ is brick-directed, then $K_0(\proj A)_{\mathbb{R}}$ admits a consistent sequence. By Theorem \ref{Thm: Brick-directed & maximal chain of brick-splitting pairs}, there exists a maximal chain $\mathcal{T}_1\subset \mathcal{T}_2\subset \cdots\subset \mathcal{T}_r$ of brick-splitting torsion classes. Thus we have an ordering  $\brick A=\{X_1, X_2,\ldots, X_r\}$ on $\brick A$ such that
     \[\mathcal{T}_i\cap\brick A=\{X_j\mid 1\le j\le i\}\ \mbox{ and }\ \mathcal{F}_i\cap\brick A=\{X_j\mid i<j\le r\},\] 
     where $\mathcal{F}_i:=\mathcal{T}_i^\perp$. Since $A$ is brick-finite, Proposition \ref{Prop: functorially finite torsion pairs are semistable} shows that there exists $\theta_i\in K_0(\proj A)_\mathbb{R}$ such that $\mathcal{T}_{\theta_i}=\mathcal{T}_{i-1}$ and $\overline{\mathcal{T}}_{\theta_i}=\mathcal{T}_i$. Then $\theta_1, \theta_2, \ldots,\theta_r$ satisfy the desired condition. 
     In fact, $X_i\in\mathcal{T}_i\cap\mathcal{F}_{i-1}=\overline{\mathcal{T}}_{\theta_i}\cap\overline{\mathcal{F}}_{\theta_i}=\mathcal{W}_{\theta_i}$ implies condition (3).
     For $i<j$, $X_j\in\mathcal{F}_i=\mathcal{F}_{\theta_i}$ implies condition (1). For $i>j$, $X_j\in\mathcal{T}_{i-1}=\mathcal{T}_{\theta_i}$ implies condition (2).

     We now prove that, if $K_0(\proj A)_{\mathbb{R}}$ admits a weakly consistent sequence, then $A$ is brick-directed.
    Assume that with the total order $1 < 2 < \cdots < r$, we have been able to choose a weakly consistent sequence $\{\theta_j\}_{j \in J}$ with respect to an ordering $\brick A=\{X_j\}_{j \in J}$. For each $i$, we let $\mathcal{T}_i := \overline{\mathcal{T}}_{\theta_i}$. 
     By induction on $i$, we claim that
     \[\mathcal{T}_i = \Filt(X_1 \oplus \cdots \oplus X_i).\]
     Since $\mathcal{T}_i=\Filt(\mathcal{T}_i \cap \brick A)$ holds by \cite[Lemma 3.9]{DI+}, it suffices to show $\mathcal{T}_i \cap \brick A=\{X_1,\ldots,X_i\}$. The inclusion ``$\subseteq$'' follows by (1). We prove ``$\supseteq$''.
     By (3), we have $X_i \in \mathcal{T}_i$. 
     For each $j<i$, take any quotient module $N$ of $X_j\in\mathcal{T}_j$. Since $N\in\mathcal{T}_j=\Filt(X_1\oplus\cdots\oplus X_j)$ by the induction hypothesis, we have $\theta_i([N])\ge0$ by (2). Thus $X_j\in\mathcal{T}_i$, as desired. We have proved ``$\supseteq$''.     

     The constructed chain $0 \subset \mathcal{T}_1 \subset \cdots \subset \mathcal{T}_r = \modu A$ yields that $A$ is brick-directed.
 \end{proof}

\section{Left modular lattices in representation theory}\label{Section: Trim lattices in representation theory}
In this section, we use our results on brick-splitting torsion pairs and brick-directed algebras to give a new characterization of several lattice theoretical notions in the setting of lattice of torsion classes. In particular, we prove Theorem \ref{Thm: brick-directed, extremal, left modular, trimness} and Corollaries \ref{Cor: extremailty preserved}, \ref{Cor: Characterization of rep-directed algebras} and \ref{Cor: Uniqueness of dim vector of bricks} from Section \ref{Sec: Summary of Main Results}. More specifically, we fully determine for which algebras, the lattice of torsion classes are extremal, left modular and trim (for definitions, see Section \ref{Section: Preliminaries and Background}).

As remarked before, in the literature of lattice theory, the lattice theoretical notions considered in this work (i.e., extremality, left modularity and trimness) are only defined for finite lattices. Since we study such properties for the lattice of torsion classes, unless specified otherwise, throughout the entire section we only work with those algebras $A$ for which $\tors A$ is a finite lattice. This is known to be the case if and only if $A$ is brick-finite (for details, see \cite{DIJ} and references therein).

In what follows, we first prove Theorem \ref{Thm: brick-directed, extremal, left modular, trimness}. This is obtained as a consequence of the next two propositions, which can be seen as the algebraic counterparts of Theorem \ref{Thm: extremal & semidistributive is trim}, where we are concerned with the lattice of torsion classes of brick-finite algebras.

\begin{proposition}\label{Prop: brick-directed equiv. left modular}
If $A$ is a brick-finite algebra, then the following are equivalent:
\begin{enumerate}
    \item $A$ is brick-directed;
    \item $\tors A$ is left modular.
\end{enumerate}
\end{proposition}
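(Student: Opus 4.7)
The proposition is essentially a formal consequence of two earlier results in the excerpt, and my plan is to execute the combination carefully. The two key inputs are Proposition \ref{Prop: brick-splitting is left modular}, which asserts that a torsion class $\mathcal{T}\in\tors A$ is brick-splitting if and only if $\mathcal{T}$ is a left modular element of the lattice $\tors A$, and Theorem \ref{Thm: Brick-directed & maximal chain of brick-splitting pairs}, which asserts that $A$ is brick-directed if and only if $\tors A$ admits a maximal chain consisting of brick-splitting torsion classes. Note that brick-finiteness guarantees $\tors A$ is a finite lattice, so the lattice-theoretic notion of left modularity (which the paper defines for finite lattices as the existence of a maximal chain of left modular elements) is available.

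For the direction $(1)\Rightarrow(2)$, I would start by applying Theorem \ref{Thm: Brick-directed & maximal chain of brick-splitting pairs} to produce a maximal chain $\mathcal{T}_0\subsetneq \mathcal{T}_1\subsetneq\cdots\subsetneq \mathcal{T}_r$ in $\tors A$ in which every $\mathcal{T}_i$ is a brick-splitting torsion class. Then, invoking Proposition \ref{Prop: brick-splitting is left modular} term-by-term, every $\mathcal{T}_i$ is a left modular element of $\tors A$. Thus $\tors A$ possesses a maximal chain of left modular elements, which is precisely the definition of a left modular lattice.

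For the direction $(2)\Rightarrow(1)$, I would reverse the argument. By the assumption that $\tors A$ is left modular, pick a maximal chain $\mathcal{T}_0\subsetneq\mathcal{T}_1\subsetneq\cdots\subsetneq\mathcal{T}_r$ in $\tors A$ all of whose members are left modular elements. Applying Proposition \ref{Prop: brick-splitting is left modular} to each $\mathcal{T}_i$ shows that every element of this chain is a brick-splitting torsion class. Applying the equivalence (1)$\Leftrightarrow$(3) of Theorem \ref{Thm: Brick-directed & maximal chain of brick-splitting pairs} then yields that $A$ is brick-directed.

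Because the heavy lifting has already been done in Proposition \ref{Prop: brick-splitting is left modular} and Theorem \ref{Thm: Brick-directed & maximal chain of brick-splitting pairs}, the only point requiring any care is the translation between the pointwise statement (each torsion class is brick-splitting iff left modular) and the lattice-level statement (the lattice has a maximal chain of such elements), and this translation is immediate once one recalls the definitions. Hence I do not anticipate any genuine obstacle; the proof is effectively a two-line concatenation of the previously established equivalences.
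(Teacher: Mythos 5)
Your proposal is correct and is exactly the paper's argument: both proofs combine Theorem~\ref{Thm: Brick-directed & maximal chain of brick-splitting pairs} (brick-directed iff a maximal chain of brick-splitting torsion classes exists) with Proposition~\ref{Prop: brick-splitting is left modular} (brick-splitting iff left modular element) and then unwind the definition of a left modular lattice. The only superficial difference is that you spell out the two implications separately, whereas the paper chains the biconditionals in one pass.
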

\begin{proof}
By Theorem \ref{Thm: Brick-directed & maximal chain of brick-splitting pairs}, $A$ is brick-directed if and only if $\tors A$ admits a maximal chain of brick-splitting torsion classes. Moreover, Proposition \ref{Prop: brick-splitting is left modular} asserts that brick-splitting torsion classes in $\modu A$ are exactly the left modular elements in $\tors A$. Hence, $A$ is brick-directed if and only if $\tors A$ admits a maximal chain consisting of left modular elements. Therefore, by definition, this implies that $A$ is brick-directed if and only if $\tors A$ is left modular.
\end{proof}

In the next proposition, we consider the notion of extremality of $\tors A$. Before we state the following result, we note that the next proposition can be alternatively concluded from Proposition \ref{Prop: brick-directed equiv. left modular}, together with some recent results of \cite{Mu} on semidistributive left modular lattices (for more details, see Section \ref{Subsection: Lattice theory}). However, we use our results on brick-splitting torsion classes and brick-directed algebras to give a direct proof of this proposition.

\begin{proposition}\label{Prop: brick-directed equiv. extremal}
If $A$ is a brick-finite algebra, the following are equivalent:
\begin{enumerate}
    \item $A$ is brick-directed;
    \item $\tors A$ is extremal.
\end{enumerate}
\end{proposition}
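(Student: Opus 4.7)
The plan is to reduce extremality of $\tors A$ to the single numerical identity $\ell(\tors A) = |\brick A|$, and then to identify both implications with the existence of a chain of bricks of maximal length. Since $\tors A$ is finite (hence in particular semidistributive), every join-irreducible is completely join-irreducible and likewise for meet-irreducibles. Applying Theorem \ref{Thm: bijection between Join-irr and brick-labels} to the full interval $[0, \modu A]$ therefore supplies bijections between $\JI(\tors A)$, $\brick A$, and $\MI(\tors A)$, so $|\JI(\tors A)| = |\MI(\tors A)| = |\brick A|$. Combined with the known upper bound $\ell(\tors A) \le |\brick A|$ (recorded immediately after Theorem \ref{Thm: Demonet's correspondence}), extremality is reduced to the equality $\ell(\tors A) = |\brick A|$.

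For $(1) \Rightarrow (2)$, I will invoke the characterization of brick-directedness in Theorem \ref{Thm: Brick-directed & maximal chain of brick-splitting pairs}(2) to fix a total ordering $\brick A = \{B_1, \ldots, B_m\}$ with $\Hom_A(B_i, B_j) = 0$ for $i < j$. Taking $\mathcal{T}_i$ to be the smallest torsion class containing $B_1, \ldots, B_i$, the Hom-orthogonality propagates through $\Fac$ and $\Filt$ to place $B_{i+1}$ in $\mathcal{T}_i^\perp$, so $B_{i+1} \notin \mathcal{T}_i$ while $B_{i+1} \in \mathcal{T}_{i+1}$. Consequently, the chain $\mathcal{T}_0 \subsetneq \mathcal{T}_1 \subsetneq \cdots \subsetneq \mathcal{T}_m$ is strict and has length $m = |\brick A|$, forcing $\ell(\tors A) = |\brick A|$ and hence extremality.

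For the converse $(2) \Rightarrow (1)$, I will take a maximal chain $0 = \mathcal{T}_0 \lessdot \mathcal{T}_1 \lessdot \cdots \lessdot \mathcal{T}_m = \modu A$ of length $m = |\brick A|$ and read off the bricks $X_1, \ldots, X_m$ labeling the successive cover relations. Viewing this as a descending path in $\Hasse(\tors A)$, Proposition \ref{Prop: labels distinct in a path} yields $\Hom_A(X_i, X_j) = 0$ whenever $i < j$; in particular the $X_i$ are pairwise non-isomorphic and, being $m = |\brick A|$ in number, they exhaust $\brick A$. This is precisely condition (2) of Theorem \ref{Thm: Brick-directed & maximal chain of brick-splitting pairs}, yielding brick-directedness. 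The most delicate step is keeping the indexing conventions consistent between the ascending chain of torsion classes, the descending path in the Hasse quiver that governs the direction of Hom-vanishing in Proposition \ref{Prop: labels distinct in a path}, and the ordering of bricks in Theorem \ref{Thm: Brick-directed & maximal chain of brick-splitting pairs}(2); once this bookkeeping is straight, both implications are essentially mechanical.
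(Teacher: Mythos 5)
Your proof is correct, and it reaches the same reduction the paper does (via Theorem \ref{Thm: bijection between Join-irr and brick-labels}, extremality of $\tors A$ amounts to $\ell(\tors A) = |\brick A|$), but the two implications are argued through somewhat different intermediaries. For $(1)\Rightarrow(2)$, the paper passes through the brick-splitting maximal chain furnished by Theorem \ref{Thm: Brick-directed & maximal chain of brick-splitting pairs} and then applies Proposition \ref{Prop: brick-splitting and intervals} to see that every brick labels an arrow along that chain; you instead bypass the brick-splitting machinery and build the chain $\mathcal{T}_i = \mathcal{T}(B_1\oplus\cdots\oplus B_i)$ directly from the Hom-ordered bricks, using the fact that $\Hom$-vanishing propagates through $\Fac$ and $\Filt$ to get strict inclusions (this is essentially the construction internal to the proof of Theorem \ref{Thm: Brick-directed & maximal chain of brick-splitting pairs}, re-exposed). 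For $(2)\Rightarrow(1)$, the paper invokes Demonet's correspondence (Theorem \ref{Thm: Demonet's correspondence}) to extract a total order on $\brick A$ from a maximal chain of length $|\brick A|$; you instead read off the brick labels of the covers along the chain and apply Proposition \ref{Prop: labels distinct in a path} to get the required Hom-orthogonality, which is a lighter-weight and arguably more transparent route to the same conclusion. Both implications then funnel into condition (2) of Theorem \ref{Thm: Brick-directed & maximal chain of brick-splitting pairs}, exactly as in the paper. Your indexing bookkeeping between the ascending chain, the descending Hasse path, and the ordering of bricks is handled correctly.
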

\begin{proof}
First, note that, by Theorem \ref{Thm: bijection between Join-irr and brick-labels}, there is a bijection between $\brick A$ and the set of join-irreducible (similarly, the set of meet-irreducible) elements of $\tors A$ (also see \cite[Theorem 1.5]{BCZ}). Thus, to prove the proposition, we show that $A$ is brick-directed if and only if $\tors A$ admits a maximal chain of length $|{\brick A}|$. 

To prove $(1)\to (2)$, we first note that, by Theorem \ref{Thm: Brick-directed & maximal chain of brick-splitting pairs}, $\tors A$ admits a maximal chain $\{\mathcal{T}_i\}_{i\in I}$ of brick-splitting torsion classes. Moreover, by Proposition \ref{Prop: brick-splitting and intervals}, for each $\mathcal{T}_i$ in this chain, every $X\in \brick A$ belongs to $\brick[0,\mathcal{T}_i]$ or $\brick[\mathcal{T}_i,\modu A]$. Hence, each $X\in \brick A$ appears as a label of the maximal chain $\{\mathcal{T}_i\}_{i\in I}$, implying that $\tors A$ has a maximal chain of length $|{\brick A}|$, thus $\tors A$ is a extremal lattice.

To show $(2)\to (1)$, note that if $\tors A$ is extremal, then $\tors A$ admits a maximal chain of length $|{\brick A}|$. Then, by Theorem \ref{Thm: Demonet's correspondence}, there is a chain of bricks of length $|{\brick A}|$, that is, there is a total order on the set of $\brick A$. This, by definition, implies that there is no brick-cycle in $\modu A$, hence $A$ is brick-directed.
\end{proof}

As a consequence of the preceding propositions, we obtain the following result.

\begin{corollary}\label{Cor: brick-directed equiv. trim}
Let $A$ be brick-finite algebra. Then, $A$ is brick-directed if and only if $\tors A$ is a trim lattice. That being the case, the spine of $\tors A$ forms a distributive sublattice of $\tors A$ which consists of brick-splitting torsion classes.
\end{corollary}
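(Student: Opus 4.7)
The plan is to derive the main equivalence as a direct assembly of Propositions \ref{Prop: brick-directed equiv. left modular} and \ref{Prop: brick-directed equiv. extremal}, and then read off the description of the spine from Proposition \ref{Prop: brick-splitting is left modular} combined with two cited facts from \cite{TW}.

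First, since $A$ is brick-finite, $\tors A$ is a finite lattice, and the notion of trimness applies. By definition, a finite lattice is trim precisely when it is simultaneously extremal and left modular. Proposition \ref{Prop: brick-directed equiv. left modular} shows that $A$ is brick-directed iff $\tors A$ is left modular, while Proposition \ref{Prop: brick-directed equiv. extremal} shows that $A$ is brick-directed iff $\tors A$ is extremal. Taking the conjunction of these two equivalences immediately yields the first claim: $A$ is brick-directed if and only if $\tors A$ is trim.

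For the statement on the spine, I would assume that $A$ is brick-directed so that $\tors A$ is a trim lattice, and invoke two results from the literature. On the one hand, \cite[Prop. 2.6]{TW} asserts that in any extremal lattice the spine forms a distributive sublattice; applied to $\tors A$, this already gives the distributivity assertion. On the other hand, \cite[Theorem 3.7]{TW} identifies, in a trim lattice, the spine with the set of left modular elements. Combining this with Proposition \ref{Prop: brick-splitting is left modular}, which shows that the left modular elements of $\tors A$ are exactly the brick-splitting torsion classes, yields that the spine of $\tors A$ consists precisely of the brick-splitting torsion classes.

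The argument is essentially a bookkeeping of previously established results, so there is no substantive obstacle; the only care needed is to check that the hypotheses of the cited theorems of \cite{TW} are met, which is automatic since $\tors A$ is finite and semidistributive. As a by-product, one recovers the fact already noted after the statement of Theorem \ref{Thm: brick-directed, extremal, left modular, trimness} that each brick-splitting torsion class lies on a chain of maximal length $|{\brick A}|$ in $\tors A$.
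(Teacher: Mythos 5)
Your proof follows the paper's argument essentially verbatim: the main equivalence is the conjunction of Propositions \ref{Prop: brick-directed equiv. left modular} and \ref{Prop: brick-directed equiv. extremal}, and the spine description comes from combining \cite[Theorem 3.7]{TW} with Proposition \ref{Prop: brick-splitting is left modular}. The only cosmetic difference is that you cite \cite[Prop. 2.6]{TW} for distributivity of the spine where the paper's proof cites \cite[Lemma 7]{Th}; both are valid references for that fact, and indeed the paper itself uses the former citation in its preliminaries section.
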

\begin{proof}
By definition, $\tors A$ is a trim lattice if and only if $\tors A$ is extremal and left modular. Hence, the first assertion follows from Propositions \ref{Prop: brick-directed equiv. left modular}
and \ref{Prop: brick-directed equiv. extremal}. Moreover, from \cite[Theorem 3.7]{TW}, an element of a trim lattice is left modular if and only if it lies on the spine. 
Hence, if $\tors A$ is trim, then by Proposition \ref{Prop: brick-splitting is left modular}, a torsion class $\mathcal{T}$ in $\modu A$ belongs to the spine of $\tors A$ if and only if $\mathcal{T}$ is brick-splitting. Moreover, from \cite[Lemma 7]{Th}, it follows that the spine of every trim lattice is a distributive sublattice.
\end{proof} 

From some standard algebraic arguments, together with Proposition \ref{Prop: some properties of brick-directed algebras}, we obtain Corollary \ref{Cor: extremailty preserved}, which we prove below.

\begin{corollary}\label{Cor: quotien, corner and tau-reduction of extremal lattices}
Let $A$ be a brick-finite algebra. If $\tors A$ is extremal  (equivalently, left modular), then the following lattices are also extremal:
\begin{enumerate}
    \item $\tors A/J$, for any 2-sided ideal $J$ in $A$;
    \item $\tors eAe$, for any idempotent element $e$ in $A$;
    \item $\tors B$, for any algebra $B$ which is a $\tau$-reduction of $A$.
\end{enumerate}
\end{corollary}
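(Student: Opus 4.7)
The plan is to deduce this corollary from a straightforward combination of Theorem \ref{Thm: brick-directed, extremal, left modular, trimness} with Proposition \ref{Prop: some properties of brick-directed algebras}. The hypothesis that $A$ is brick-finite and $\tors A$ is extremal (equivalently, left modular) gives, via Theorem \ref{Thm: brick-directed, extremal, left modular, trimness}, that $A$ is brick-directed. Then Proposition \ref{Prop: some properties of brick-directed algebras} already tells us that brick-directedness passes to each of the three constructions in (1)--(3): quotients by two-sided ideals, idempotent corners $eAe$, and $\tau$-reductions. To close the loop, we then apply Theorem \ref{Thm: brick-directed, extremal, left modular, trimness} in the reverse direction to conclude that the resulting $\tors$-lattices are extremal (and automatically left modular, trim).

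The only step that is not immediately handed to us by these two previously proven results is that the three constructions preserve brick-finiteness, since Theorem \ref{Thm: brick-directed, extremal, left modular, trimness} is only available when the output algebra is also brick-finite. For (1), every $(A/J)$-module is an $A$-module, so $\brick(A/J) \subseteq \brick A$ and brick-finiteness is immediate. For (2), the realization of $\modu(eAe)$ as a full (in fact wide, via the Schur functor) subcategory of $\modu A$ used in the proof of Proposition \ref{Prop: some properties of brick-directed algebras} shows that bricks over $eAe$ biject with bricks over $A$ supported on $e$, so brick-finiteness transfers. For (3), preservation of brick-finiteness (equivalently, $\tau$-tilting finiteness) under $\tau$-reduction is known: the identification of $\modu B$ with a wide subcategory of $\modu A$ from \cite{Ja} (cited in the proof of Proposition \ref{Prop: some properties of brick-directed algebras}) realizes $\brick B$ as a subset of $\brick A$.

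Putting these ingredients together, each of the algebras $A/J$, $eAe$, and any $\tau$-reduction $B$ is a brick-finite brick-directed algebra, so that Proposition \ref{Prop: brick-directed equiv. extremal} (equivalently, Theorem \ref{Thm: brick-directed, extremal, left modular, trimness}) applies to give extremality of the corresponding lattice of torsion classes. I do not expect any genuine obstacle here; the content of the corollary is essentially a bookkeeping exercise that routes the lattice-theoretic property through the algebraic notion of brick-directedness, where closure under the three operations has already been established.
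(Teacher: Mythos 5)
Your proposal takes essentially the same route as the paper's proof: derive brick-directedness of $A$ from extremality of $\tors A$ via Theorem \ref{Thm: brick-directed, extremal, left modular, trimness}, pass brick-directedness to $A/J$, $eAe$, and $B$ via Proposition \ref{Prop: some properties of brick-directed algebras}, and then run the equivalence in reverse to conclude extremality of the new torsion lattices. The one place where you go beyond the paper is in explicitly checking that the three constructions preserve brick-finiteness, which Propositions \ref{Prop: brick-directed equiv. left modular} and \ref{Prop: brick-directed equiv. extremal} require but the paper's two-line proof leaves tacit; your justifications (quotient bricks embed in $\brick A$, bricks over $eAe$ correspond via the Schur functor to bricks over $A$, and $\tau$-reduction realizes $\modu B$ as a wide subcategory so $\brick B$ injects into $\brick A$) are all correct, so this is a more careful write-up of the same argument rather than a genuinely different one.
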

\begin{proof}
By Corollary \ref{Cor: brick-directed equiv. trim}, $A$ is brick-directed if and only if $\tors A$ is a trim lattice. Moreover, by Proposition \ref{Prop: some properties of brick-directed algebras}, the algebras $A/J$, $eAe$, and $B$ described above are also brick-directed. Hence, the implications follow immediately from Propositions \ref{Prop: brick-directed equiv. left modular} and  \ref{Prop: brick-directed equiv. extremal}.
\end{proof}

As noted earlier, the family of brick-directed algebras properly generalizes the classical family of representation-directed algebras. In fact, from the above lattice theoretical characterizations of brick-directed algebras, we also obtain a useful characterization of representation-directed algebras in terms of their lattice of torsion classes. In particular, we prove Corollary \ref{Cor: Characterization of rep-directed algebras}.

\begin{corollary}\label{Cor: charact. of rep-directed via brick-directed}
Let $A$ be brick-finite algebra. Then, $A$ is representation-directed if and only if $A$ is representation-finite and length of $\tors A$ is $|{\ind A}|$.
\end{corollary}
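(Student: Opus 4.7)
The plan is to bootstrap from two well-known facts that link indecomposables, bricks, and representation-directedness: first, an algebra is representation-directed if and only if it is locally representation-directed (i.e.\ $\ind A = \brick A$) and brick-directed; second, by Theorem \ref{Thm: bijection between Join-irr and brick-labels} together with Theorem \ref{Thm: Demonet's correspondence}, the length of any chain in $\tors A$ is bounded by $|{\brick A}|$, and $\tors A$ is extremal precisely when this bound is attained.

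For the forward direction, I would assume $A$ is representation-directed. Then $A$ is representation-finite by \cite[IX.3, Corollary 3.4]{ASS}. Moreover, if $X \in \ind A$ and $f \in \End_A(X)$ were non-zero and non-invertible, then $X \xrightarrow{f} X$ would be a cycle in $\modu A$, contradicting representation-directedness; hence $\End_A(X)$ is a division algebra, so $X$ is a brick. Thus $\ind A = \brick A$, and in particular any brick-cycle in $\modu A$ is a cycle, so $A$ is brick-directed. By Proposition \ref{Prop: brick-directed equiv. extremal}, $\tors A$ is extremal, and since the length of $\tors A$ equals $|{\brick A}|$ (being extremal, with $|{\JI(\tors A)}|=|{\brick A}|$ by Theorem \ref{Thm: bijection between Join-irr and brick-labels}), we conclude that the length of $\tors A$ is $|{\brick A}|=|{\ind A}|$.

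For the converse, assume $A$ is representation-finite and that $\tors A$ has length $|{\ind A}|$. The chain-length bound from Theorem \ref{Thm: Demonet's correspondence} gives $|{\ind A}| \le |{\brick A}|$, while the trivial containment $\brick A \subseteq \ind A$ gives the reverse inequality. Hence $\brick A = \ind A$, so $A$ is locally representation-directed, and moreover the length of $\tors A$ equals $|{\brick A}|=|{\JI(\tors A)}|=|{\MI(\tors A)}|$, so $\tors A$ is extremal. Proposition \ref{Prop: brick-directed equiv. extremal} then yields that $A$ is brick-directed. Finally, any cycle in $\modu A$ consists of bricks (since $\ind A = \brick A$) and is therefore a brick-cycle, which cannot exist; hence $A$ is representation-directed.

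I do not anticipate a serious obstacle here: once the equivalence "representation-directed $\Longleftrightarrow$ locally representation-directed $+$ brick-directed" is in place, the result is essentially a counting argument combining the bijection $\brick A \leftrightarrow \JI^{c}(\tors A) \leftrightarrow \MI^{c}(\tors A)$ of Theorem \ref{Thm: bijection between Join-irr and brick-labels} with Proposition \ref{Prop: brick-directed equiv. extremal}. The only place that deserves care is the verification that $\ind A = \brick A$ forces every cycle to be a brick-cycle, which is immediate but is the conceptual bridge between the classical notion of representation-directedness and the brick-directed setting developed in this paper.
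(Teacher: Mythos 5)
Your proof is correct and takes essentially the same route as the paper: both directions hinge on the facts that representation-directedness forces $\ind A = \brick A$, that the length of $\tors A$ is bounded by $|\brick A|$, and that brick-directedness is equivalent to extremality of $\tors A$ (Proposition \ref{Prop: brick-directed equiv. extremal}). The only cosmetic difference is in the final step of the converse: the paper invokes Theorem \ref{Thm: Demonet's correspondence} to extract a total order on $\ind A$, whereas you observe directly that $\ind A = \brick A$ turns any cycle into a brick-cycle, so brick-directedness immediately kills all cycles; these are interchangeable.
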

\begin{proof}
If $A$ is representation-directed, then we have $\ind A=\brick A$. In particular, every indecomposable module in $\modu A$ is rigid, and therefore $A$ is representation-finite (see \cite{Dr} or \cite{MP1}). Moreover, $A$ is evidently brick-directed and therefore, by Proposition \ref{Prop: brick-directed equiv. extremal}, $\tors A$ is extremal, that is, $\tors A$ has a maximal chain of length $|{\brick A}|$. Thus, length of $\tors A$ is obviously $|{\ind A}|$.

To show the converse, first observe that, for any algebra $A$, length of $\tors A$ is always bounded by $|{\brick A}|$, and we obviously have  $|{\brick A}|\leq |{\ind A}|$. Thus, if $A$ is representation-finite and length of $\tors A$ is $|{\ind A}|$, we must have $|{\brick A}|=|{\ind A}|$ and that $\tors A$ admits a maximal chain of length $|{\brick A}|$. These, in particular, imply that $\brick A=\ind A$, and $A$ is brick-directed. By Theorem \ref{Thm: Demonet's correspondence}, this further implies that there is a total order on $\ind A$, implying that $A$ is a representation-directed algebra.
\end{proof}

Regarding the preceding proof, we note that the equality $\ind A=\brick A$ implies that $A$ is representation-finite. However, this does not necessarily imply that $A$ is representation-directed (for more details, see \cite{Dr} and \cite{MP1}).

\begin{remark}
Let us briefly remark on the connection between our results on brick-directed algebras and the notion of (torsional) brick chain filtration, recently introduced in \cite{Ri1}. 
In particular, observe that a brick-finite algebra $A$ is brick-directed if and only if a module in $\modu A$ admits a (torsional) brick chain filtration that contains all elements of $\brick A$.
\end{remark}

As another remarkable property of brick-directed algebras, below we state and prove Corollary \ref{Cor: Uniqueness of dim vector of bricks}. To put this result in perspective, recall that each representation-directed algebra is representation-finite, and over any such algebra every indecomposable is uniquely determined by its dimension vector (see {\cite[IX. Prop. 1.3]{ASS}}). Our next result asserts the analogous property for brick-finite algebras which are brick-directed.

\begin{corollary}\label{Cor: unique dim vector}
Let $A$ be a brick-finite algebra. If $A$ is brick-directed, then each brick in $\modu A$ is uniquely determined by its dimension vector, that is, for two distinct elements $X$ and $Y$ in $\brick A$, we have $[X] \ne [Y]$.
\end{corollary}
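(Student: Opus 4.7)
The plan is to derive a contradiction from the consistent sequence of stability functions supplied by Theorem \ref{Prop: walls ordered consistently}. Since $A$ is brick-finite and brick-directed, that theorem produces a total ordering $\brick A = \{X_1, X_2, \ldots, X_r\}$ together with elements $\theta_1, \ldots, \theta_r \in K_0(\proj A)_{\mathbb{R}}$ such that each $X_i$ is $\theta_i$-stable (so in particular $\theta_i([X_i]) = 0$) and $\theta_i([X_j]) < 0$ whenever $i < j$.

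Suppose for contradiction that $X \ne Y$ are two bricks with $[X] = [Y]$ in $K_0(\modu A)$. Relabel them so that $X = X_i$ and $Y = X_j$ with $i < j$. Since $\theta_i$ induces, via the Euler form, an $\mathbb{R}$-linear functional on $K_0(\modu A)_{\mathbb{R}}$, its value depends only on the class in the Grothendieck group, i.e.\ only on the dimension vector. Therefore $[X_i] = [X_j]$ forces $\theta_i([X_i]) = \theta_i([X_j])$, and combining this with the two properties of the consistent sequence above gives $0 = \theta_i([X_i]) = \theta_i([X_j]) < 0$, the desired contradiction.

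No substantial obstacle remains once Theorem \ref{Prop: walls ordered consistently} is invoked; the whole proof is a two-line application of that result together with the elementary observation that a linear form cannot take two different values on the same Grothendieck class. If one prefers to bypass the wall-and-chamber machinery, essentially the same argument can be given more directly: Theorem \ref{Thm: Brick-directed & maximal chain of brick-splitting pairs} provides a maximal chain $0 = \mathcal{T}_0 \subset \cdots \subset \mathcal{T}_r = \modu A$ of brick-splitting torsion classes whose successive arrows are labeled by the distinct bricks $X_1, \ldots, X_r$, and Proposition \ref{Prop: functorially finite torsion pairs are semistable} then supplies, for each $i$, a $\theta_i \in K_0(\proj A)_{\mathbb{R}}$ with $\overline{\mathcal{T}}_{\theta_i} = \mathcal{T}_i$, $\mathcal{T}_{\theta_i} = \mathcal{T}_{i-1}$, under which $X_i$ is $\theta_i$-stable; for $j > i$ one observes $X_j \in \mathcal{T}_{j-1}^\perp \subseteq \mathcal{T}_i^\perp = \mathcal{F}_{\theta_i}$, which forces $\theta_i([X_j]) < 0$, and the same contradiction follows.
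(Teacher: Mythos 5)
Your proof is correct and follows essentially the same route as the paper's: both arguments take the stability functions $\theta_i$ produced (via Theorem \ref{Prop: walls ordered consistently} and, underneath it, Proposition \ref{Prop: functorially finite torsion pairs are semistable}) from the maximal chain of brick-splitting torsion classes, and then derive the contradiction from the fact that $\theta_i$ depends only on the class in $K_0(\modu A)$. The only cosmetic difference is that you use the stability of $X_i$ to get $\theta_i([X_i]) = 0$, whereas the paper uses only semistability, $\theta_l([X_l]) \ge 0$, to set up the same contradiction.
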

\begin{proof}
Since $A$ is brick-directed, we take a sequence $X_1,X_2,\ldots,X_r$ of all bricks over $A$ ordered so that $\Hom_A(X_i,X_j)=0$ for any $i<j$ by Theorem \ref{Thm: Brick-directed & maximal chain of brick-splitting pairs}. As in the proof of Proposition \ref{Prop: walls ordered consistently}, we take $\theta_i \in K_0(\proj A)_\mathbb{Q}$ for each $i$ so that $X_i \in \overline{\mathcal{T}}_{\theta_j}$ if $i \le j$ and $X_i \in \mathcal{F}_{\theta_j}$ if $i>j$.

Now assume that there exist two distinct elements $X$ and $Y$ in $\brick A$ such that $[X]=[Y]$. Without loss of generality, we may write $X=X_l$ and $Y=X_m$ with $l<m$. Then $Y=X_m \in \mathcal{F}_{\theta_l}$ holds, which gives $\theta_l([Y]) < 0$. On the other hand, we also have $X=X_l \in \overline{\mathcal{T}}_{\theta_l}$, which implies $\theta_l([X]) \ge 0$. These contradict each other, because $[X]=[Y]$.
\end{proof}

\begin{remark}
Motivated by the uniqueness property shown in Corollary \ref{Cor: unique dim vector}, and our results from Section \ref{Subsection: Wall-chamber structure of brick-directed algebras} on wall-and-chambers of brick-directed algebras, we pose the following question on brick-infinite algebras:
Let $A$ be brick-directed, and assume that $X$ and $Y$ are bricks with $[X]=[Y]$ in $K_0(\modu A)$. Then does there exist a brick $Z$ such that $[X]=[Y]=[Z]$ and $\Theta_X \subset \Theta_Z$ and $\Theta_Y \subset \Theta_Z$?  
\end{remark}

We note that the uniqueness property considered in Corollary \ref{Cor: unique dim vector} can also be interpreted in terms of paths in a Newton polytope. To explain this point, let us first recall some terminology.

Let $M \in \modu A$. Then, the \emph{Newton polytope} $\mathrm{N}(M)$ of $M$ in $K_0(\modu A)_\mathbb{R}$ is defined by \cite{BKT,Fe1} as the convex hull of the subset $\{[L] \mid \text{$L$ is a submodule of $M$}\}$. For basic properties of Newton polytopes, see also \cite{AH+,Fe2}.

For any two elements $v,w \in K_0(\modu A)_\mathbb{R}$, we write $v \le w$ if $w-v=\sum_{i=1}^n a_i[S_i]$ with $a_i \ge 0$ for each $i$, and $v<w$ means $v \le w$ and $v \ne w$.  We call a sequence $v_0<v_1<\cdots<v_l$ \emph{an increasing path} of the Newton polytope $\mathrm{N}(M)$ if $v_i$ is a vertex of $\mathrm{N}(M)$, for each $i \in \{0,\ldots,l\}$, and the line segment $[v_{i-1},v_i]$ is an edge of $\mathrm{N}(M)$ with $v_{i-1}<v_i$, for each $i \in \{1,\ldots,l\}$. 

Moreover, for any $\mathcal{T} \in \tors A$ and $M \in \modu A$, we define $t_\mathcal{T}(M) \subseteq M$ as the unique submodule $L \subseteq M$ such that $L \in \mathcal{T}$ and $M/L \in \mathcal{T}^\perp$.

In our context, it is enough to consider the case where $M$ is the direct sum of all bricks in $\modu A$ with $A$ brick-finite. Then $\mathrm{N}(M)$ and $\tors A$ are related as follows.

\begin{proposition}\label{Prop: Newton polytopes}\cite[Subsection 5.3]{AH+} \cite[Subsection 9.2]{Fe2}
Let $A$ be brick-finite, and set $M$ as the direct sum of all $X \in \brick A$. 
\begin{enumerate}
\item 
There exists a bijection
$$\tors A \to \{\text{vertices of $\mathrm{N}(M)$}\}$$ 
given by $\mathcal{T} \mapsto [t_\mathcal{T}(M)]$.
\item 
The map in (1) induces a bijection 
$$ \{\text{maximal chains in $\tors A$}\} \to \{\text{increasing paths in $\mathrm{N}(M)$ from $0$ to $[M]$}\},$$
which preserves lengths.
\item
Let $0=\mathcal{T}_0 \subset \mathcal{T}_1 \subset \cdots \subset \mathcal{T}_l=\modu A$ be a maximal chain in $\tors A$, and assume that it is sent to $0=v_0<v_1<\cdots<v_l=[M]$ under the bijection (2). Then, for any $i \in \{1,\ldots,l\}$, we have $v_i-v_{i-1} \in \mathbb{Z}_{\ge 1}[X_i]$ for the label $X_i$ of the arrow $\mathcal{T}_i \to \mathcal{T}_{i-1}$.
\end{enumerate}
\end{proposition}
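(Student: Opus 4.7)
The plan is to establish (1) via stability-theoretic arguments in the spirit of Proposition \ref{Prop: functorially finite torsion pairs are semistable}, and then to deduce (2) and (3) by carefully analyzing what happens along a single cover relation. For (1), given $\mathcal{T}\in\tors A$, I would first choose $\theta\in K_0(\proj A)_\mathbb{R}$ in the interior of the $g$-vector cone of $\mathcal{T}$, so that $\mathcal{T}_\theta=\overline{\mathcal{T}}_\theta=\mathcal{T}$ and hence $\mathcal{W}_\theta=0$. For an arbitrary submodule $L\subseteq M$, I then decompose $[L]=[L\cap t_\mathcal{T}(M)]+[L/(L\cap t_\mathcal{T}(M))]$. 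The quotient $L/(L\cap t_\mathcal{T}(M))$ embeds into $M/t_\mathcal{T}(M)\in\mathcal{T}^\perp=\mathcal{F}_\theta$, so $\theta$ is non-positive on it, strictly so unless $L\subseteq t_\mathcal{T}(M)$; and $t_\mathcal{T}(M)/(L\cap t_\mathcal{T}(M))$ is a quotient of $t_\mathcal{T}(M)\in\mathcal{T}_\theta$, so $\theta$ is non-negative on it and strictly positive unless $L\supseteq t_\mathcal{T}(M)$. Combining these, $\theta([L])\le\theta([t_\mathcal{T}(M)])$ with equality if and only if $L=t_\mathcal{T}(M)$; hence $[t_\mathcal{T}(M)]$ is the unique $\theta$-maximizer in $\mathrm{N}(M)$, and thus a vertex. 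For surjectivity, given any vertex $v$, any generic $\theta$ strictly maximized at $v$ yields a torsion class $\overline{\mathcal{T}}_\theta$ with $[t_{\overline{\mathcal{T}}_\theta}(M)]=v$. Injectivity follows from $t_\mathcal{T}(M)=\bigoplus_{X\in\brick A} t_\mathcal{T}(X)$ together with the observation that, for a brick $X$, one has $X\in\mathcal{T}$ if and only if $t_\mathcal{T}(X)=X$, combined with the fact that $\mathcal{T}$ is determined by $\mathcal{T}\cap\brick A$ via $\mathcal{T}=\Filt(\mathcal{T}\cap\brick A)$.

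For (3), I would use that for a cover relation $\mathcal{T}_{i-1}\lessdot\mathcal{T}_i$ with label $X_i$, the subcategory $\mathcal{T}_i\cap\mathcal{T}_{i-1}^\perp$ is a wide subcategory of $\modu A$ whose unique simple object is $X_i$, hence equals $\Filt(X_i)$. Since $t_{\mathcal{T}_{i-1}}(M)\subseteq t_{\mathcal{T}_i}(M)$, and the quotient $t_{\mathcal{T}_i}(M)/t_{\mathcal{T}_{i-1}}(M)$ lies in $\mathcal{T}_i$ while embedding into $M/t_{\mathcal{T}_{i-1}}(M)\in\mathcal{T}_{i-1}^\perp$, this quotient belongs to $\Filt(X_i)$, so its $K_0$-class is $n_i[X_i]$ for some $n_i\ge 1$ (positivity follows from the injectivity in (1), which forces the quotient to be non-zero). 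For (2), I would apply Proposition \ref{Prop: functorially finite torsion pairs are semistable} to the cover relation $\mathcal{T}_{i-1}\lessdot\mathcal{T}_i$ to obtain $\theta$ with $\overline{\mathcal{T}}_\theta=\mathcal{T}_i$, $\mathcal{T}_\theta=\mathcal{T}_{i-1}$, and $\mathcal{W}_\theta=\Filt(X_i)$. A repeat of the argument from (1) shows that $L\subseteq M$ achieves $\theta([L])=\theta([t_{\mathcal{T}_i}(M)])$ precisely when $t_{\mathcal{T}_{i-1}}(M)\subseteq L\subseteq t_{\mathcal{T}_i}(M)$ and $L/t_{\mathcal{T}_{i-1}}(M)\in\Filt(X_i)$, so $[L]=v_{i-1}+m[X_i]$ for some $0\le m\le n_i$. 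Hence the $\theta$-maximal face of $\mathrm{N}(M)$ is exactly the segment $[v_{i-1},v_i]$, identifying it as an edge. Chaining these edges along a maximal chain in $\tors A$ gives an increasing path from $0$ to $[M]$. Conversely, given an increasing path $0=v_0<v_1<\cdots<v_l=[M]$, the preimages under (1) form a chain $\mathcal{T}_0\subset\cdots\subset\mathcal{T}_l$ of torsion classes; each edge-crossing corresponds to a wall-crossing in the $g$-fan, hence to a cover relation in $\tors A$, making the chain maximal, with length preserved by construction.

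The main obstacle, I expect, is the edge property in (2): showing that no vertex of $\mathrm{N}(M)$ other than $v_{i-1}$ and $v_i$ lies in the $\theta$-maximal face. The key input is that $\mathcal{W}_\theta=\Filt(X_i)$ has only one isomorphism class of simple object (namely $X_i$), which forces every $\theta$-semistable submodule of $M$ to have dimension vector on the segment $v_{i-1}+m[X_i]$; in particular only the endpoints $v_{i-1}$ and $v_i$ can be vertices of $\mathrm{N}(M)$. In the converse direction (path to chain), one additionally needs that the normal cone of each vertex $v_i$ matches the $g$-vector cone of $\mathcal{T}_i$ in the $g$-fan of $\tors A$; this compatibility between the normal fan of $\mathrm{N}(M)$ and the $g$-vector fan is essentially established in \cite{BKT,AH+} for brick-finite algebras, and it is what ensures that adjacent vertices along an edge correspond to a cover relation rather than merely a strict inclusion.
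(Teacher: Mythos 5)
The paper does not actually prove this proposition: it is stated as a known result and cited to \cite[Subsection 5.3]{AH+} and \cite[Subsection 9.2]{Fe2}, so there is no proof in the paper to compare your argument against. Your sketch is nevertheless a reasonable reconstruction in the spirit of those references, reproducing the stability-theoretic picture: the vertices of $\mathrm{N}(M)$ are detected as $\theta$-maximizers for $\theta$ in the interior of a chamber, and the edges as $\theta$-maximal faces for $\theta$ in the relative interior of a wall.

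A few places where the sketch would need tightening before it could be called a proof. In (1), showing injectivity requires more than the observation $t_\mathcal{T}(M)=\bigoplus_X t_\mathcal{T}(X)$; the point is that if $[t_{\mathcal{T}}(M)]=[t_{\mathcal{T}'}(M)]$ then, choosing $\theta$ generic for $\mathcal{T}$, both $t_\mathcal{T}(M)$ and $t_{\mathcal{T}'}(M)$ realise the $\theta$-maximum among submodule classes, and your uniqueness argument only gives uniqueness of the \emph{submodule}, not of its class; one must pass from equality of submodules $t_\mathcal{T}(M)=t_{\mathcal{T}'}(M)$, then to equality of each $t_\mathcal{T}(X)=t_{\mathcal{T}'}(X)$, then to $\mathcal{T}\cap\brick A=\mathcal{T}'\cap\brick A$, and then invoke $\mathcal{T}=\Filt(\mathcal{T}\cap\brick A)$. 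In (2), the forward direction (chain $\to$ path) is in good shape given Proposition~\ref{Prop: functorially finite torsion pairs are semistable} and the fact that the $\theta$-semistable subcategory at a wall is $\Filt(X_i)$; but the converse direction (path $\to$ maximal chain) is genuinely delegated to the normal-fan $=\,$ $g$-fan compatibility of \cite{BKT,AH+}, and that is precisely the substantive content that these references provide. Since the paper treats the whole proposition as an imported result, deferring the converse to those sources is consistent with the paper's own treatment, but it means your write-up is a guide to the literature rather than a self-contained proof.
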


An increasing path $v_0<v_1<\cdots<v_l$ in $\mathrm{N}(M)$ is said to be \emph{indivisible} if $v_i-v_{i-1}$ is an indivisible element in $K_0(\modu A)$, for each $i \in \{1,\ldots,l\}$. 

\begin{lemma}\label{Lemma: indivisible paths}
Let $A$ be brick-finite, and set $M$ as the direct sum of all $X \in \brick A$. Assume that $0=v_0<v_1<\cdots<v_l=[M]$ is an increasing path in $\mathrm{N}(M)$ from $0$ to $[M]$. Then the path is indivisible if and only if $l=|{\brick A}|$.
\end{lemma}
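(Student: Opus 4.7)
The plan is to translate the statement about paths in the Newton polytope into a statement about maximal chains in $\tors A$ via Proposition \ref{Prop: Newton polytopes}, and then exploit that dimension vectors of nonzero modules are non-negative integer combinations of the classes of simples.

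First I would apply Proposition \ref{Prop: Newton polytopes}(2) to identify the given increasing path $0 = v_0 < v_1 < \cdots < v_l = [M]$ with a maximal chain $0 = \mathcal{T}_0 \subset \mathcal{T}_1 \subset \cdots \subset \mathcal{T}_l = \modu A$. Let $X_i \in \brick A$ be the label of the cover relation $\mathcal{T}_i \to \mathcal{T}_{i-1}$. By part (3) of the same proposition, we have $v_i - v_{i-1} = n_i [X_i]$ for some $n_i \in \mathbb{Z}_{\ge 1}$, and by definition the path is indivisible exactly when every $n_i = 1$. Summing telescopes to $\sum_{i=1}^{l} n_i [X_i] = [M] = \sum_{X \in \brick A} [X]$, which will be the master identity driving both directions.

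Next I would use Proposition \ref{Prop: labels distinct in a path} to observe that the labels $X_1,\ldots,X_l$ are pairwise Hom-orthogonal, hence pairwise non-isomorphic. In particular $\{X_1,\ldots,X_l\}$ is a subset of $\brick A$ of size $l$, so automatically $l \le |\brick A|$.

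For the forward direction, assume the path is indivisible, so the master identity becomes $\sum_{i=1}^l [X_i] = \sum_{X \in \brick A} [X]$. Subtracting gives $\sum_{X \in \brick A \setminus \{X_1,\ldots,X_l\}} [X] = 0$ in $K_0(\modu A)$; since the class of each nonzero module is a nonzero non-negative integer combination of the classes of simples, the indexing set must be empty, so $\{X_1,\ldots,X_l\} = \brick A$ and $l = |\brick A|$. For the converse, if $l = |\brick A|$, then by the distinctness step we have $\{X_1,\ldots,X_l\} = \brick A$, so the master identity simplifies to $\sum_{i=1}^l (n_i - 1)[X_i] = 0$. Each $n_i - 1 \ge 0$ and each $[X_i]$ has non-negative integer coordinates with at least one strictly positive entry, so this non-negative combination of non-negative vectors can vanish only when every $n_i - 1 = 0$, giving indivisibility. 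The only real subtlety is keeping track that ``indivisible'' is equivalent to $n_i = 1$ for every $i$ (rather than just to the edge vectors being primitive in some weaker sense), but this is immediate from part (3) of Proposition \ref{Prop: Newton polytopes}, so no serious obstacle is expected.
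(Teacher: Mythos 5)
Your overall structure matches the paper's argument closely: you invoke Proposition \ref{Prop: Newton polytopes}(2) and (3) to reduce to the identity $\sum_{i=1}^l n_i[X_i]=[M]=\sum_{X\in\brick A}[X]$, use Proposition \ref{Prop: labels distinct in a path} to get distinctness of the labels, and push the positivity of coordinates in the simple basis in both directions. The forward direction (indivisible $\Rightarrow l=|\brick A|$) is fine as written.

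There is, however, a genuine gap in the converse direction, precisely at the point you flag as ``the only real subtlety.'' You claim the equivalence between ``the path is indivisible'' and ``$n_i=1$ for every $i$'' is immediate from Proposition \ref{Prop: Newton polytopes}(3). That proposition only gives $v_i-v_{i-1}=n_i[X_i]$ with $n_i\in\mathbb{Z}_{\ge 1}$; it says nothing about $[X_i]$ itself being an indivisible element of $K_0(\modu A)$. One direction is indeed automatic (if $n_i[X_i]$ is indivisible and $[X_i]\ne 0$ then $n_i=1$), and this is all your forward direction uses. But for the converse, after showing $n_i=1$ for all $i$, you still need to know that each $[X_i]$ is itself indivisible in order to conclude the path is indivisible. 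That fact does not come from Proposition \ref{Prop: Newton polytopes}; the paper supplies it via Lemma \ref{Lemma: relative-interior of walls}(2), which asserts (for brick-finite $A$) that the dimension vectors of the $\theta$-stable bricks are extendable to a $\mathbb{Z}$-basis of $K_0(\modu A)$, hence are indivisible. Without this input your argument for ``$l=|\brick A|\Rightarrow$ indivisible'' stops at ``all $n_i=1$'' and does not reach the conclusion. Replacing the citation of Proposition \ref{Prop: Newton polytopes}(3) with Lemma \ref{Lemma: relative-interior of walls}(2) at this step closes the gap and recovers the paper's proof.
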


\begin{proof}
By Proposition \ref{Prop: Newton polytopes}(3), we take a positive integer $m_i \in \mathbb{Z}_{\ge 1}$ such that $v_i-v_{i-1}=m_i[X_i]$ for each $i \in \{1,\ldots,l\}$. Then we get 
$$\sum_{X \in \brick A}[X]=[M]=v_l-v_0=\sum_{i=1}^l m_i[X_i] .$$

We first show the ``if'' part. The bricks $X_1,\ldots,X_l$ are distinct by Proposition \ref{Prop: labels distinct in a path}, so the assumption $l=|{\brick A}|$ implies that $X_1,\ldots,X_l$ are all the bricks in $\modu A$. Thus $\sum_{X \in \brick A}[X]=\sum_{i=1}^l m_i[X_i]$ above gives $m_i=1$, for any $i$. This implies that the increasing path $0=v_0<v_1<\cdots<v_l=[M]$ is indivisible, because Lemma \ref{Lemma: relative-interior of walls}(2) implies that each $[X_i]$ is indivisible.

Next we show the ``only if'' part. Assume that $0=v_0<v_1<\cdots<v_l=[M]$ is indivisible. Then we have $m_i=1$, for any $i$. Thus we have $\sum_{X \in \brick A}[X]=\sum_{i=1}^l [X_i]$. This implies $l=|{\brick A}|$, since $X_1,\ldots,X_l$ are distinct.
\end{proof}

Now we are ready to prove Theorem \ref{Newton polytope thm in Introduction} and give an interesting characterization of brick-directed algebras in terms of properties of the Newton polytope considered above.

\begin{theorem}\label{Thm: Newton polytope}
Let $A$ be brick-finite, and set $M$ as the direct sum of all $X \in \brick A$. Then, $A$ is brick directed if and only if 
there exists an indivisible increasing path in $\mathrm{N}(M)$ from $0$ to $[M]$.
\end{theorem}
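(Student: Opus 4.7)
The plan is to chain together three already-established equivalences: the characterization of brick-directedness via extremality of $\tors A$, the length-preserving bijection between maximal chains in $\tors A$ and increasing paths in $\mathrm{N}(M)$, and the length characterization of indivisible paths. No genuinely new ingredient is needed; the argument is a bookkeeping of lengths.

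First I would recall that, by Theorem \ref{Thm: brick-directed, extremal, left modular, trimness}, the algebra $A$ is brick-directed if and only if $\tors A$ is extremal. Combined with the bijection $\brick A \leftrightarrow \JI^c(\tors A) \leftrightarrow \MI^c(\tors A)$ of Theorem \ref{Thm: bijection between Join-irr and brick-labels}, extremality of $\tors A$ is precisely the condition that $\tors A$ admits a maximal chain of length $|{\brick A}|$ (since the length of any such chain in a finite lattice is bounded above by $|\JI(\tors A)|=|{\brick A}|$, and extremality asserts equality is attained).

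Next I would apply Proposition \ref{Prop: Newton polytopes}(2), which gives a length-preserving bijection between maximal chains in $\tors A$ and increasing paths in $\mathrm{N}(M)$ from $0$ to $[M]$. Thus the existence of a maximal chain in $\tors A$ of length $|{\brick A}|$ is equivalent to the existence of an increasing path in $\mathrm{N}(M)$ from $0$ to $[M]$ of length $|{\brick A}|$.

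Finally, by Lemma \ref{Lemma: indivisible paths}, an increasing path in $\mathrm{N}(M)$ from $0$ to $[M]$ has length $|{\brick A}|$ if and only if it is indivisible. Concatenating these three equivalences gives the desired characterization. The only subtlety to watch is that Proposition \ref{Prop: Newton polytopes}(2) sends \emph{maximal} chains in $\tors A$ to increasing paths, so in the backward direction one must start from an indivisible increasing path, invoke Lemma \ref{Lemma: indivisible paths} to obtain length $|{\brick A}|$, and then observe that any increasing path from $0$ to $[M]$ automatically corresponds to a maximal chain (this being exactly the content of Proposition \ref{Prop: Newton polytopes}(2)); so no obstacle of substance arises. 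I expect the argument to be only a few lines once written out.
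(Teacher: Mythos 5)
Your proposal is correct and follows essentially the same route as the paper: brick-directedness is reduced to extremality of $\tors A$ (equivalently, the existence of a maximal chain of length $|\brick A|$), then Proposition \ref{Prop: Newton polytopes}(2) transfers this to increasing paths in $\mathrm{N}(M)$, and Lemma \ref{Lemma: indivisible paths} converts the length condition into indivisibility. The ``subtlety'' you flag about the backward direction is indeed handled exactly as you say, since Proposition \ref{Prop: Newton polytopes}(2) is stated as a bijection onto \emph{all} increasing paths from $0$ to $[M]$.
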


\begin{proof}
As in the proof of Proposition \ref{Prop: brick-directed equiv. extremal}, $A$ is brick directed if and only if $\tors A$ has a maximal chain of length $|{\brick A}|$. The latter is equivalent to the existence of an increasing path of $\mathrm{N}(M)$ of length $|{\brick A}|$ from $0$ to $[M]$, by Proposition \ref{Prop: Newton polytopes}(2), and also to the existence of an indivisible increasing path of $\mathrm{N}(M)$ from $0$ to $[M]$, by Lemma \ref{Lemma: indivisible paths}.
\end{proof}

\medskip

We finish this section with a remark and further observations on Corollary \ref{Cor: unique dim vector}, including a question on the behavior of arbitrary brick-directed algebras.

\begin{remark}\label{Rem: Unique dim vect but not brick-direcetd}
To suitably put the important uniqueness property of Corollary \ref{Cor: unique dim vector} in perspective, particularly with regard to some open conjectures on bricks, we separately discuss the case for brick-finite as well as brick-infinite algebras.

\begin{enumerate}
\item Observe that if $A$ is a brick-finite algebra whose bricks are uniquely determined by their dimension vectors, we cannot necessarily conclude that $A$ is brick-directed. For instance, let $A=kQ/I$, where $Q$ is a given by  
$$\xymatrix{& 2 \ar[dr]^\beta & \\
1 \ar[ur]^\alpha & & 3 \ar[ll]^\gamma }$$
and $I=\langle \beta\alpha, \gamma\beta, \alpha\gamma\rangle$. Then, $\ind A=\{{\begin{smallmatrix} 1\end{smallmatrix}}, {\begin{smallmatrix} 2\end{smallmatrix}}, {\begin{smallmatrix} 3\end{smallmatrix}}, {\begin{smallmatrix} 1\\2\end{smallmatrix}}, {\begin{smallmatrix} 2\\3\end{smallmatrix}}, {\begin{smallmatrix} 3\\1\end{smallmatrix}}\}
$, 
and $\ind A=\brick A$. Obviously, each element of $\ind A$ is uniquely determined by its dimension vector. However, ${\begin{smallmatrix} 1\\2\end{smallmatrix}} \rightarrow {\begin{smallmatrix} 3\\1\end{smallmatrix}} \rightarrow {\begin{smallmatrix} 2\\3\end{smallmatrix}} \rightarrow {\begin{smallmatrix} 1\\2\end{smallmatrix}}$ is a brick-cycle in $\modu A$. Thus, $A$ is not brick-directed.

\item As observed previously, there exist various families of brick-infinite algebras that are brick-directed (see Prop. \ref{Prop: Hereditary brick-directed algs} and Cor. \ref{Various families of brick-directed algebras}). We note that, in contrast to the uniqueness property in Corollary \ref{Cor: unique dim vector}, one cannot expect a similar statement for the brick-infinite brick-directed algebras. 
More precisely, let $A$ be a brick-infinite algebra over an algebraically closed field. Then, by the so-called ``Second brick-Brauer-Thrall" conjecture (2nd bBT), first posed by the third-named author in 2019, it is expected that $\brick A$ contains an infinite set of bricks with the same dimension vector. For the original motivations of the 2nd bBT, and the most recent state of art on this conjecture, respectively, see \cite{Mo2, Mo3}, \cite{MP2, MP3}, and the references therein. Meanwhile, one can naturally ask if, for an arbitrary brick-directed algebra $A$, and for each dimension vector $\underline{d} \in \mathbb{Z}^n_{\geq 0}$, the module variety $\modu (A,\underline{d})$ always admits at most one brick component (for terminology and some motivations, see \cite{MP1, MP4}).
\end{enumerate}
\end{remark}

\newpage

\textbf{Acknowledgments.} We thank Hugh Thomas for some stimulating discussions about trim lattices at the earlier stage of this project, and for some helpful follow-up comments. We also thank Adrien Segovia for drawing our attention to \cite[Theorem 3.2]{Mu}, and Eric Hanson for pointing out some related studies in \cite[Section 5]{BH}. 
SA was supported by Early-Career Scientist JSPS Kakenhi grant number 23K12957. 
OI was supported by JSPS Grant-in-Aid for Scientific Research (B) 22H01113, (B) 23K22384.
KM was supported by Early-Career Scientist JSPS Kakenhi grant number 24K16908. CP was supported by the Natural Sciences and Engineering Research Council of Canada (RGPIN-2018-04513) and by the Canadian Defence Academy Research Programme.

\end{document}